\newtheorem{Theorem}{Theorem}[section]
\newtheorem{Lemma}{Lemma}[section]
\newtheorem{Proposition}{Proposition}[section]
\theoremstyle{definition}
\newtheorem{Definition}{Definition}[section]
\theoremstyle{remark}
\newtheorem{Remark}{Remark}[section]
\numberwithin{equation}{section}
\def\va{\varphi}
\renewcommand{\i}{{\mathcal I}}
\renewcommand{\j}{{\mathcal J}}
\newcommand{\R}{{\mathbb R}}
\newcommand{\tr}{{\rm tr}}
\newcommand{\cd}{\cdot}
\newcommand{\na}{\nabla}
\newcommand{\dl}{\delta}
\def\f{\frac}
\renewcommand{\O}{\Omega}
\def\D{\Delta }
\def\hf1{^\f{1}{1-\xi^2}}
\def\l{\lambda}
\def\be{\begin{equation}}
\def\en{\end{equation}}
\def\bs{\begin{split}}
\def\es{\end{split}}
\def\ba{\begin{align}}
\def\ea{\end{align}}
\renewcommand{\a}{\alpha}
\renewcommand{\b}{\beta}
\newcommand{\del}{\partial}
\newcommand{\supp}{{\rm supp}}
\newcommand{\pa}{{\mathcal P}}
\newcommand{\la}{\lambda}
\newcommand{\s}{S_0}
\newcommand{\ve}{\varepsilon}
\author[G.-Q. Chen]{Gui-Qiang Chen}
\address{Mathematical Institute, University of Oxford, Oxford OX2 6GG, UK.}
\email{chengq@maths.ox.ac.uk}
\author[A. Majumdar]{Apala Majumdar}
\address{Department of Mathematical Sciences, University of Bath, Bath,  BA2 7AY,
UK.}
\email{a.majumdar@bath.ac.uk}
\author[D. Wang]{Dehua Wang}
\address{Department of Mathematics, University of Pittsburgh,
                           Pittsburgh, PA 15260, USA.}
\email{dwang@math.pitt.edu}
\author[R. Zhang]{Rongfang Zhang}
\address{Department of Mathematics, University of Pittsburgh,
                           Pittsburgh, PA 15260, USA.}
\email{roz14@pitt.edu}
\title[Active Liquid Crystals]
{Global Existence and Regularity of Solutions for Active Liquid Crystals}
\keywords{Navier-Stokes equations, active liquid crystals, global well-posedness, weak solutions, strong solutions, regularity, weak-strong uniqueness}
\subjclass[2000]{35Q35, 76D05, 76A15}
\date{\today}
\begin{document}
\begin{abstract}
We study the hydrodynamics of active liquid crystals in the Beris-Edwards hydrodynamic framework
with the Landau-de Gennes $Q$-tensor order parameter to describe liquid crystalline ordering.
The existence of global weak solutions in two and three spatial dimensions is established.
In the two-dimensional case, by the Littlewood-Paley decomposition, the higher regularity of the weak solutions
and the weak-strong  uniqueness are also obtained.
\end{abstract}

\maketitle
\section{Introduction}

Liquid crystals are classical examples of mesophases that are intermediate between solids and liquids ({\it cf}. \cite{dg}).
They often combine physical properties of both liquids and solids, and
in general liquid crystals can be divided into thermotropic, lyotropic, and metallotropic phases,
according to their different optical properties.
Nematic liquid crystals are one of the most common liquid crystalline phases; nematics are complex liquids
with a certain degree of long-range orientational order. That is, the constituent molecules are typically rod-like or
elongated, and these elongated molecules flow about freely as in a conventional liquid but, whilst flowing,
they tend to align along certain distinguished directions ({\it cf}. \cite{dg,virga}).

There are several competing mathematical theories for nematic liquid crystals in the literature,
such as the Doi-Onsager theory proposed by Doi \cite{D-E-1986} in 1986 and Onsager\cite{O-1949} in 1949,
the Oseen-Frank theory proposed by Oseen \cite{O-1933} in 1933 and Frank \cite{F-1958} in 1958,
the Ericksen-Leslie theory proposed by Ericksen \cite{E-1961} in 1961 and Leslie \cite{L-1968} in 1968,
and the Landau-de Gennes theory proposed by Gennes \cite{G-1995} in 1995.
The first one is a molecular kinetic theory, and the remaining three are continuum macroscopic theories.
These theories can be derived or related to each other, under some assumptions.
For instance, Kuzzu-Doi \cite{K-D-1983} and E-Zhang \cite{E-Z-2006} formally derived
the Ericksen-Leslie equation from the Doi-Onsager equations by taking small Deborah number limit.
Wang-Zhang-Zhang \cite{W-Z-Z} justified this formal derivation before the first singular time of
the Ericksen-Leslie equations.
Wang-Zhang-Zhang \cite{W-Z-Z-1} presented a rigorous derivation of the Ericksen-Leslie equations
from the Beris-Edwards model
in the Landau-de Gennes framework. Ball-Majumdar \cite{B-M-2010}  and Ball-Zarnescu \cite{B-Z-2011} studied
the differences and the overlap between the Oseen-Frank theory and the Landau-de Gennes theory.
See \cite{L-L-2001, M-2010, M-Z-2010} for further discussions.

Active hydrodynamics describe fluids with active constituent particles that have collective motion and
are constantly maintained out of equilibrium by internal energy sources, rather than by the external forces applied to the system.
In particular, when the particles have elongated shapes, usually the collective motion induces the particles
to demonstrate orientational ordering at high concentration.
Thus, there are natural analogies with nematic liquid crystals. 
Active hydrodynamics have wide applications and have attracted much attention in recent decades.
For example, many biophysical systems are classified as active nematics, including microtubule bundles \cite{S-C-D-H-D-2012},
cytoskeletal filaments \cite{K-F-K-L-2008}, actin filaments \cite{C-D-2007},
dense suspensions of microswimmers \cite{W-D-H-D-G-L-Y-2012}, bacteria \cite{D-T-R-B-2010},
catalytic motors \cite{P-K-O-2004},
and even nonliving analogues such as monolayers of vibrated granular rods \cite{M-J-R-L-P-R-A-2013}.
For more information and discussions,
see  \cite{B-T-Y-2014, R-Y-2013, G-M-C-H-2011, G-M-C-H-2012, P-K-1992, D-E-1986} and the references therein.
Active nematic systems are distinguished from their well-studied passive counterparts since the constituent particles are active;
that is, it is the energy  consumed and dissipated by the active particles  that drives the system out of equilibrium,
rather than the external force applied at the boundary of the system, like a shear flow.
Consequently, active dynamics are truly striking, and many novel effects have been observed in active systems,
like the occurrence of giant density fluctuations \cite{R-S-T-2003, M-R-2006, N-R-M-2007},
the spontaneous laminar flow \cite{V-J-P-2005, M-O-C-Y-2007, G-M-L-2008},
unconventional rheological properties \cite{S-A-2009, G-L-M-2010, F-M-C-2011},
low Reynolds number turbulence \cite{W-D-H-D-G-L-Y-2012, G-M-C-H-2012}, and very different spatial and temporal patterns
compared to passive systems \cite{M-R-2006, C-G-M-2006, S-S-2008, G-P-B-C-2010, M-B-M-2010} arising
from the interaction of the orientational order and the flow.

In this paper, we use the Landau-de Gennes $Q$-tensor description that is one of the most comprehensive
descriptions, which describes the nematic state by a symmetric traceless $3\times 3$ matrix, the $Q$-tensor order parameter
with five independent degrees of freedom if the spatial dimension is three.
A nematic phase is said to be (i) isotropic if $Q=0$, (ii) uniaxial if $Q$ has a pair of degenerate non-zero eigenvalues,
and (iii) biaxial if $Q$ has three distinct eigenvalues.
In particular, a uniaxial phase has a single distinguished direction of nematic alignment,
and a biaxial phase has a primary and secondary direction of preferred alignment.
We remark that two-dimensional Q-tensors have been used to successfully model severely confined three-dimensional  nematic systems that are effectively invariant in the third dimension.

In particular, we consider the following hydrodynamic equations that model spatio-temporal
pattern formation in incompressible active nematic systems (see \cite{G-B-M-M-2013, G-M-C-H-2011}):
\be \label{Qtensor-1}
\begin{cases}
\partial_{t}Q+(u\cdot \nabla )Q+Q\Omega-\Omega Q-\lambda |Q|D=\Gamma H,\\
(\del_{t}+u_{\b}\del_{\b})u_{\a}+\del_{\a} P-\mu \D u_{\a} =\del_{\b} \tau_{\a \b}+\del_{\b}\sigma_{\a \b},\\
\nabla \cdot u=0,
\end{cases}
\en
where $u\in\R^d$, $d=2$ or $3$, is the flow velocity; $P$ is the pressure; $Q$ is the nematic tensor order parameter
that is a traceless and symmetric $d\times d$ matrix; $\mu>0$ denotes the viscosity coefficient;
$\Gamma^{-1}>0$ is the rotational viscosity; $\la \in \R$ stands for the nematic alignment parameter;
$D=\frac{1}{2}(\nabla u+\nabla u^{\top})$ and $\Omega=\frac{1}{2}(\nabla u-\nabla u^{\top})$ are the symmetric
and antisymmetric part of the strain tensor with $(\na u)_{\a\b}=\del_{\b}u_{\a}$.
Hereafter, we use the Einstein summation convention, {\it i.e.},  the repeated indices are summed over,
and $\alpha, \beta=1,2,\dots, d$. The time variable is $t\ge 0$, the space variable is $x=(x_1, \dots, x_d)$,
and $\partial_\beta=\frac{\partial}{\partial x_\beta}$.
Moreover, the molecular tensor
\begin{equation*}
\begin{split}
H=K\D Q-\frac{k}{2}(c-c_{*})Q+b\big(Q^2 -\frac{\tr(Q^2)}{d} \mathrm{I}_{d}\big)-cQ\,\tr(Q^2)
\end{split}
\end{equation*}
describes the relaxational dynamics of the nematic phase and can be obtained from
the Landau-de Gennes free energy, {\it i.e.}, $H_{\a \b}=-\frac{\dl F}{\dl Q_{\a \b}}$,
where
\begin{equation*}
F=\int \Big( \frac{k}{4}(c-c^{*})\tr(Q^2) -\frac{b}{3}\tr(Q^3) +\frac{c}{4} |\tr(Q^{2})|^{2}+\frac{K}{2}|\na Q|^{2}\Big)dA,
\end{equation*}
with $K$ the elastic constant for the one-constant elastic energy density,
$c$ the concentration of active units
and $c^*$ the critical concentration for the isotropic-nematic transition,
and $k>0$ and $b\in\R$ are  material-dependent constants.
We note that the analysis of this paper holds for all real $b$, 
but $b$ is usually taken to be positive in the literature.
In what follows, we set $K=k=1$ for simplicity of notation.
The stress tensor $\sigma_{\a \b}$ is
\begin{equation*}
\sigma_{\a \b}=\sigma_{\a \b}^{r}+\sigma_{\a \b}^{a},
\end{equation*}
with
$$
\sigma_{\a \b}^{r}=-\la |Q|H_{\a \b}+Q_{\a \dl}H_{\dl \b}-H_{\a \dl}Q_{\dl \b},\qquad
\sigma_{\a \b}^{a}=\sigma_{*} c^{2}Q_{\a \b},
$$
where $\sigma_{\a\b}^{r}$ is the elastic stress tensor due to the nematic elasticity,
and $\sigma_{\a \b}^{a}$ is the active contribution which describes contractile or extensile stresses exerted
by the active particles in the direction of the director field ($\sigma_{*}>0$ for the contractile case,
and $\sigma_{*}<0$ for the extensile case).
The symmetric additional stress tensor is denoted by
\begin{equation*}
\tau_{\a \b}:=-(\na Q\odot \na Q)_{\a \b}=-\del_{\b}Q_{\gamma \delta}\del_{\a}Q_{\gamma \delta}.
\end{equation*}

In the rest of this paper, we consider the case when $c>0$ is a constant. We set
\begin{equation*}
a=\frac{1}{2}(c-c_{*}), \quad \kappa=\sigma_{*} c^2.
\end{equation*}
Then system (\ref{Qtensor-1}) becomes
\be \label{Qtensor}
\begin{cases}
\partial_{t}Q+(u\cdot \nabla )Q+Q\Omega-\Omega Q-\lambda |Q|D=\Gamma H,\\
\del_{t}u+(u\cdot \na )u+\na P-\mu \D u =-\na \cdot (\na Q\odot \na Q)-\lambda \nabla \cdot (|Q|H)\\
\qquad \qquad \qquad \qquad \qquad \qquad \qquad+\nabla \cdot (Q\D Q-\D Q Q)+\kappa \na \cdot Q,\\
\nabla \cdot u=0,
\end{cases}
\en
with $$
H=\D Q-aQ+b\big(Q^2 -\frac{\tr(Q^2)}{d} \mathrm{I}_{d}\big)-cQ\,\tr(Q^2),
$$
the constants
$c>0, \, \Gamma >0, \, \mu>0, \, a, b, \lambda, \kappa \in \mathbb{R}$, and $(x, t)\in \R^{d}\times \R^{+}$.

Regarding the related mathematical contributions in the $Q$-tensor liquid crystal framework,
Paicu-Zarnescu \cite{P-Z-2011, P-Z-2012} proved the existence of global weak solutions to
the coupled incompressible Navier-Stokes and $Q$-tensor system for $d=2,3$,
as well as the existence of global regular solutions  with sufficiently
regular initial data for $d=2$.
Wilkinson \cite{W-2012} obtained the existence and regularity of weak solutions on the $d$--dimensional
torus over a certain singular potential.
In \cite{F-R-S-Z-2012}, Feireisl-Rocca-Schimperna-Zarnescu derived the global-in-time weak solutions
in the $Q$-tensor framework,
with arbitrary physically relevant initial data  in case of a singular
bulk potential proposed in Ball-Majumdar \cite{B-M-2010}.
Wang-Xu-Yu \cite{W-X-Y-2015} established the existence and long-time dynamics of globally
defined weak solutions for the coupled compressible Navier-Stokes and $Q$-tensor system.
See \cite{DeAnnaZar} and the references therein for more results and discussions.

We study the active system (\ref{Qtensor}) to establish the existence
of weak solutions in two and three spatial dimensions,
along with the existence of regular solutions and the uniqueness of weak-strong solutions in the two-dimensional case,
motivated by the work of Paicu-Zarnescu \cite{P-Z-2012, P-Z-2011} for the passive system.
Since we are dealing with the active system, we need to conquer some new difficulties.
Firstly, by using the general energy method, we obtain {\it a priori} estimates for the system (\ref{Qtensor}),
based on some crucial cancellations.
Those cancellations turn out to be very important in the proof of the existence of weak solutions in $\R^d, d=2,,3$,
higher regularity and the uniqueness of weak-strong solutions in $\R^2$.
However, due to the appearance of the active term $\kappa Q_{\a \b}$,
we can only obtain an energy inequality, instead of the perfect Lyapunov functional for the smooth solutions of the system.
Here we   mention that the symmetry and traceless properties of the $Q$-tensor play a key role in the validity
of the cancellations (see also Appendix \ref{appendix-a}).
Also the property of the $Q$-tensor (\ref{trace-Q-3}) is very important in order to derive the $H^1$-estimate
for the $Q$-tensor in Proposition \ref{energy-estimate},
since the bulk potential in the Landau-de Gennes energy
density (the terms independent of $\nabla Q$) is not always  positive.
Additionally, in order to obtain the weak solutions,
we need to add the extra terms $-\ve \del_{\a}Q_{\b \gamma}(u\cdot \na Q_{\b \gamma})|u\cdot \na Q|$
and $\ve \na \cdot (\na u|\na u|^{2})$ to the system
to control some non-vanishing terms in the energy estimates for the approximate system \eqref{approx-system-1},
due to the nonlinearity of the terms: $J_n (R_{\ve}u^n \nabla Q^{(n)})$ and
$J_n (R_{\ve}\Omega^n Q^{(n)}-Q^{(n)}R_{\ve}\Omega^n )$ (see \S 3 for the notations).
In \S \ref{a-priori-estimates}--\S \ref{weak-solutions},
the cancellations for these terms work very well.
However,  in \S \ref{2D-regularity}, when we seek the regular solutions in $\R^2$,
the cancellation for the terms, $\lambda |Q|D$ and $\lambda \nabla \cdot (|Q|H)$,
does not hold perfectly as before.
We use the Littlewood-Paley decomposition to reduce these  terms to two new terms
that can be controlled by the  previous cancellation idea (see Appendix B for the details).
We also need to pay more attention to the higher order terms of $Q$ in the elastic stress tensor of the system.

%

The rest of the paper is organized as follows:
In \S 2, we obtain the dissipation principle and {\it a priori} estimates.
In \S 3, we establish the existence of weak solutions in $\R^d, d=2,3$.
In \S 4, by using the Littlewood-Paley decomposition, we restrict ourselves to the two-dimensional case
and achieve the higher regularity of the corresponding weak solution.
In \S 5, we show the uniqueness of the weak and strong solution in $\R^2$,
with suitable initial data.
In Appendix A, we provide some important preliminary estimates that we use extensively in this paper.
In Appendix B, we provide the detailed estimates for inequality (\ref{high-freq-Q-u}).

\section{The Dissipation Principle and A Priori Estimates}\label{a-priori-estimates}

In this section, by using the energy method, we derive the dissipation principle
in Proposition \ref{energy-inequality} and the {\it a priori} estimates
in Proposition \ref{energy-estimate} for  system (\ref{Qtensor}).

For the sake of convenience,  we first introduce some notations.
We denote $H^{k}$, with $k\geq 1$ integer, as the Sobolev space
that consists of all functions $v$ in $L^{2}(\R^d)$
such that $D^{\nu}v$ is in $L^2(\R^d)$ for every multi-index $\nu=(\nu_{1}, \cdots, \nu_{d})$,
$0\leq |\nu|\leq k$, where ${D}^{\nu}:=\del_{1}^{\nu_{1}}\cdots \del_{d}^{\nu_{d}}$ is
the distributional derivative.
The space $H^{k}$ is equipped with norm $\|\cdot\|_{H^k}$ defined by
\begin{equation*}
\|v\|_{H^k}^2:=\sum_{0\leq |\nu|\leq k}\|{D}^{\nu}v\|_{L^2}^{2}.
\end{equation*}
The space $H^{-k}$, with $k\geq 1$ integer, is defined as the dual
spaces of $H^{k}_{0}$, equipped with the norm:
\begin{equation*}
\|v\|_{H^{-k}}:=\sup_{\va \in H^{k}_{0}, \|\va\|_{H^{k}_{0}}\leq 1} |(v, \va)|,
\end{equation*}
where $(\cdot, \cdot)$ stands for the inner product in $L^2$.
For example, if $a$ and $b$ are vector functions, then
\begin{equation*}
(a, b)=\int_{\R^d}a(x)\cdot b(x)\,dx,
\end{equation*}
and if $A$ and $B$ are matrices, then
\begin{equation*}
(A, B)=\int_{\R^{d}}A:B \,dx
\end{equation*}
with $A:B=\tr (AB)$.
We denote by $S_{0}^{d}\subset \mathbb{M}^{d\times d}$ the space of symmetric traceless $Q$-tensors in $d$-dimension,
that is,
\begin{equation*}
S_{0}^{d}:=\left\{Q\in \mathbb{M}^{d\times d}: \;  Q_{\a \b}=Q_{\b \a}, \ \tr(Q)=0, \ \a, \b=1, \cdots, d\right\}.
\end{equation*}
We define the norm of a matrix by using the Frobenius norm  denoted by
\begin{equation*}
|Q|:=\sqrt{\tr(Q^{2})}=\sqrt{Q_{\a \b}Q_{\a \b}}.
\end{equation*}
With respect to this norm, we can define the Sobolev spaces for the $Q$-tensors, for example,
\begin{equation*}
H^{1}(\R^d, S_{0}^{d})
:=\left\{Q: \R^{d}\rightarrow S_{0}^{d}\, : \  \int_{\R^d}\big(|Q(x)|^{2}+|\na Q(x)|^{2}\big) dx <\infty \right\}.
\end{equation*}
We also denote $|\na Q|^2:=\del_{\delta}Q_{\a \b}\del_{\delta}Q_{\a \b}$
and $|\D Q|^2:=\D Q_{\a \b}\D Q_{\a \b}$.

Let us denote the Landau-de Gennes free energy for the nematic liquid crystals ({\it cf.} \cite{G-1995})
by
\begin{align}
\mathbf{F}(Q)
:=\int_{\mathbb{R}^d }\left(\frac{1}{2}|\nabla Q|^{2}
 +\frac{a}{2}|Q|^{2}-\frac{b}{3}\tr (Q^3) +\frac{c}{4}|Q|^{4}\right)dx.
\end{align}
Moreover, by adding the kinetic energy to $\mathbf{F}(Q)$, we denote the energy of system (\ref{Qtensor}) by
\begin{align}
E(t):=\mathbf{F}(Q)+\frac{1}{2}\int_{\mathbb{R}^d }|u|^{2}dx.
\end{align}

\begin{Proposition} \label{energy-inequality}
Let $(Q, u)$ be a smooth solution of system \eqref{Qtensor} such that
\be
Q \in L^{\infty}(0, T; H^{1}(\mathbb{R}^d ))\cap L^{2}(0, T; H^{2}(\mathbb{R}^d ))
\en
and
\be
u\in L^{\infty}(0, T; L^{2}(\mathbb{R}^{d}))\cap L^{2}(0, T; H^{1}(\mathbb{R}^d ))
\en
for $d=2, 3$.
Then, for any given $T>0$,  we have
\be\label{2.5a}
\frac{d}{dt}E(t) +\frac{\mu}{2}\int_{\mathbb{R}^d }|\nabla u|^2 dx
+\Gamma \int_{\mathbb{R}^d }\tr(H^2)dx \leq C(\kappa, \mu)\int_{\mathbb{R}^d }|Q|^2 dx
\quad \mbox{for any $t\in (0,T)$}.
\en
\end{Proposition}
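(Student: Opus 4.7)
The plan is to perform two coupled energy estimates and then orchestrate the cancellations built into the Beris--Edwards structure, leaving behind only the active forcing term, which is then absorbed via Young's inequality.

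First I would differentiate $\mathbf{F}(Q)$ in time, using the identity $H=-\delta F/\delta Q$, so that
\begin{equation*}
\tfrac{d}{dt}\mathbf{F}(Q)=-\int_{\R^d} H:\partial_t Q\,dx.
\end{equation*}
Substituting $\partial_t Q=-u\cdot\nabla Q-Q\Omega+\Omega Q+\lambda|Q|D+\Gamma H$ from the $Q$-equation yields four source terms. In parallel, I would take the $L^2$ inner product of the momentum equation with $u$: incompressibility kills the pressure and convection terms, the viscous contribution produces $\mu\|\nabla u\|_{L^2}^2$, and the right-hand side supplies four stress/forcing integrals paired with $u$.

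Next I would add the two identities and systematically pair terms for cancellation:
\begin{equation*}
\int H:(u\cdot\nabla Q)\,dx+\int u\cdot\bigl[-\nabla\cdot(\nabla Q\odot \nabla Q)\bigr]\,dx=0,
\end{equation*}
which follows from integrating by parts and noticing that the bulk terms $-aQ+b(Q^2-\tfrac{\tr(Q^2)}{d}\mathrm{I}_d)-cQ\tr(Q^2)$ in $H$ contract with $u\cdot\nabla Q$ to a perfect transport derivative that vanishes under $\nabla\cdot u=0$. Second,
\begin{equation*}
-\lambda\int H:(|Q|D)\,dx+\int u\cdot\bigl[-\lambda\nabla\cdot(|Q|H)\bigr]\,dx=0,
\end{equation*}
after integration by parts, since $H$ is symmetric and $\Omega$ antisymmetric so the $\Omega$-piece of $\nabla u$ contributes nothing. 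Third, using that the $Q$-dependent terms of $H$ all commute with $Q$, one has $QH-HQ=Q\Delta Q-\Delta Q\,Q$, which is antisymmetric; combining this with the cyclic identity
\begin{equation*}
\tr\bigl(H(Q\Omega-\Omega Q)\bigr)=-\tr\bigl(\Omega(QH-HQ)\bigr),
\end{equation*}
gives
\begin{equation*}
\int H:(Q\Omega-\Omega Q)\,dx+\int u\cdot\nabla\cdot(Q\Delta Q-\Delta Q\,Q)\,dx=0.
\end{equation*}
Together with $-\Gamma\int H:H\,dx=-\Gamma\int\tr(H^2)\,dx$ on the left, the only surviving term is the active contribution $\kappa\int u\cdot(\nabla\cdot Q)\,dx$.

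Finally, integrating by parts and applying Young's inequality,
\begin{equation*}
\Bigl|\kappa\int u\cdot(\nabla\cdot Q)\,dx\Bigr|=\Bigl|\kappa\int \partial_\beta u_\alpha Q_{\alpha\beta}\,dx\Bigr|\le \tfrac{\mu}{2}\|\nabla u\|_{L^2}^2+C(\kappa,\mu)\|Q\|_{L^2}^2,
\end{equation*}
the first piece is absorbed into the viscous dissipation, producing precisely (\ref{2.5a}). The key subtlety—and the step I would be most careful with—is verifying the third cancellation: one must use both that $QH-HQ$ reduces to $Q\Delta Q-\Delta Q\,Q$ (so the non-elastic parts of $H$ do not generate extra stress) and that the symmetry/antisymmetry of $H$, $Q\Omega-\Omega Q$, $D$, $\Omega$, and the commutator fit together correctly with the convention $A:B=\tr(AB)$; Appendix A is presumably invoked for the tensor identities. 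The strict positivity obstruction from the sign-indefinite bulk potential is handled at the $\mathbf{F}(Q)\ge 0$ stage rather than here, since Proposition~\ref{energy-inequality} only claims an inequality for $\tfrac{d}{dt}E(t)$.
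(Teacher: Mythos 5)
Your proposal is correct and follows essentially the same route as the paper: test the $Q$-equation with $-H$, test the momentum equation with $u$, exploit the same five cancellations, and absorb the active term $-\kappa(Q,\nabla u)$ by Young's inequality into $\frac{\mu}{2}\|\nabla u\|_{L^2}^2$. The only cosmetic difference is that you package the co-rotation/elastic-stress cancellation through the single identity $QH-HQ=Q\Delta Q-\Delta Q\,Q$, whereas the paper splits it into two separate vanishing terms (Lemma \ref{estimate-matrix} for the $\Delta Q$ part and the symmetry/antisymmetry argument for the bulk part); both are the same algebra.
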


\begin{proof} We take the summation of the first equation in (\ref{Qtensor}) multiplied by $-H$
and the second equation in (\ref{Qtensor}) multiplied by $u$,  take the trace,
and then integrate by parts over $\R^d$ to find
\begin{align*}
&\frac{d}{dt}\int_{\R^{d}}\big(\frac{1}{2}|\nabla Q|^2 +\frac{a}{2}|Q|^2 -\frac{b}{3}\tr (Q^3)+\frac{c}{4}|Q|^4 +\frac{1}{2}|u|^2\big)dx
 +\mu\|\nabla u\|_{L^2}^2 +\Gamma \int_{\R^d}\tr(H^2)dx\\
&=(u\cdot \na Q, \D Q)-(u\cdot \na Q, aQ-b(Q^2 -\frac{\tr (Q^2)}{d}\mathrm{I}_d)+cQ|Q|^{2})-(\O Q-Q\O, \D Q)\\
&\quad +(\O Q-Q\O, aQ-b(Q^2 -\frac{\tr (Q^2)}{d}\mathrm{I}_d)
  +cQ|Q|^{2})-\la (|Q|D, H)\\
&\quad-(\na \cdot (\na Q\odot \na Q), u)+\la (|Q|H, \na u)+(\na \cdot (Q \D Q-\D QQ), u)-\kappa(Q, \na u)\\
&=\sum_{i=1}^{8}\mathcal{\i}_{i}-\kappa(Q, \nabla u)\\
& \leq \frac{\mu}{2}\|\nabla u\|_{L^2}^2 +C(\kappa, \mu)\|Q\|_{L^2}^2 ,
\end{align*}
where, in the last inequality, besides Cauchy's inequality, we have used that
$\i_{2}=0$ (since $\na \cdot u=0$), $\mathcal{\i}_3 +\mathcal{\i}_8 =0$ (by Lemma \ref{estimate-matrix}),
$\mathcal{\i}_1+\mathcal{\i}_6 =0$, $\mathcal{\i}_{4}=0$,
and $\mathcal{\i}_5 +\mathcal{\i}_7 =0$, as shown below:
\begin{align*}
\i_{1}+\i_{6} &=(u \cdot \na Q, \D Q)-(\na \cdot (\na Q \odot \na Q), u)\\
&=\int u_{\a}\del_{\a}Q_{\delta \gamma}\D Q_{\delta \gamma}dx
 -\int \del_{\a}\del_{\b}Q_{\delta \gamma}\del_{\b}Q_{\delta \gamma}u_{\a}dx
-\int \del_{\a}Q_{\delta \gamma}\del_{\b}\del_{\b} Q_{\delta \gamma}u_{\a}dx \\
&=-\int \del_{\a}\del_{\b}Q_{\delta \gamma}\del_{\b}Q_{\delta \gamma}u_{\a}dx\\
&=0,
\end{align*}
and, by the fact that $Q$ is symmetric and $\Omega$ is skew symmetric,
\begin{align*}
\i_{4}&=(\O Q-Q\O, aQ-b\big(Q^2 -\frac{\tr (Q^2)}{d}\mathrm{I}_d\big)+cQ|Q|^{2})\\
&=-(\O Q+Q\O, aQ-b\big(Q^2 -\frac{\tr (Q^2)}{d}\mathrm{I}_d\big)+cQ|Q|^{2})\\
&\quad+2(\O Q, aQ-b\big(Q^2 -\frac{\tr (Q^2)}{d}\mathrm{I}_d\big)+cQ|Q|^{2})\\
&=0,
\end{align*}
and
\begin{align*}
\mathcal{\i}_5 +\mathcal{\i}_7 &=\lambda (|Q|H, \nabla u)-\lambda (|Q|D, H)=\lambda (|Q|H, \nabla u)-\lambda (|Q|H, D)\\[2mm]
&=\lambda (|Q|H, \nabla u-D)
=\lambda (|Q|H, \Omega)=0.
\end{align*}
\end{proof}

\begin{Remark}
For the passive system considered in \cite{P-Z-2011}, a perfect Lyapunov functional is available.
However, for the active system as analyzed here, only an energy inequality \eqref{2.5a} is obtained above,
which is not a Lyapunov functional in general.
\end{Remark}

Based on Proposition \ref{energy-inequality} and Gronwall's inequality (Lemma \ref{Gronwall}), we have the following
{\it a priori} estimates.

\begin{Proposition}\label{energy-estimate}
Let $(Q, u)$ be a smooth solution of system \eqref{Qtensor} in $\R^d, d=2,3$,
with smooth initial data  $(\bar{Q}(x), \bar{u}(x))$.
If $(\bar{Q}, \bar{u})\in H^1\times L^2$, then, for any $t>0$,
\be\label{Q-H1}
\|Q(t, \cdot)\|_{H^1}\leq C_1 e^{C_2 t}\big(\|\bar{Q}\|^{2}_{H^1}+\|\bar{u}\|_{L^2}^2\big),
\en
and
\be\label{u-L2}
\frac{1}{2}\|u(t, \cdot)\|_{L^2}^2 +\frac{\mu}{4}\int_{0}^{t}\|\na u(s, \cdot)\|_{L^2}^2 ds
\leq C_{3}\big(\|\bar{Q}\|_{H^1}^2 +\|\bar{u}\|_{L^2}^2\big)e^{C_{2}t}+C_{4},
\en
where constants $C_i , 1\leq i\leq 4$, depend on $(a, b, c, \kappa, \mu, \la, \Gamma, \bar{Q}, \bar{u})$. 
\end{Proposition}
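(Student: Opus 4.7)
The plan is to close the energy inequality \eqref{2.5a} with an auxiliary $L^{2}$-estimate for $Q$, and then apply Gronwall's inequality. The main obstruction is that the free energy $\mathbf{F}(Q)$ is not coercive in $H^{1}$: the coefficient $a$ may be negative and the cubic density $-\tfrac{b}{3}\tr(Q^{3})$ has indefinite sign, so $E(t)$ alone does not dominate the residual $C(\kappa,\mu)\|Q\|_{L^{2}}^{2}$ on the right of \eqref{2.5a}.

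First, I would establish a pointwise coercivity bound for the bulk potential. Since $|\tr(Q^{3})|\le |Q|^{3}\le \varepsilon|Q|^{4}+C_{\varepsilon}|Q|^{2}$ by Young's inequality, taking $\varepsilon$ sufficiently small relative to $c$ yields
\begin{equation*}
\mathbf{F}(Q)\;\ge\; \tfrac{1}{2}\|\nabla Q\|_{L^{2}}^{2}+\tfrac{c}{8}\|Q\|_{L^{4}}^{4}-C_{0}\|Q\|_{L^{2}}^{2}
\end{equation*}
with $C_{0}=C_{0}(a,b,c)$. Hence control of $E(t)+\|Q(t)\|_{L^{2}}^{2}$ will imply control of $\|Q(t)\|_{H^{1}}^{2}+\|u(t)\|_{L^{2}}^{2}$, reducing the whole proposition to bounding this augmented quantity.

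Second, I would derive a separate $L^{2}$-bound for $Q$ by testing the first equation of \eqref{Qtensor} against $Q$ over $\mathbb{R}^{d}$. The transport term vanishes by $\nabla\cdot u=0$; the commutator contribution $(Q\Omega-\Omega Q,\,Q)$ vanishes because $Q$ is symmetric and $\Omega$ antisymmetric; the tracelessness of $Q$ kills the $\mathrm{I}_{d}$-piece of $H$; and the $\lambda|Q|D$ coupling is dominated by $\tfrac{\mu}{8}\|\nabla u\|_{L^{2}}^{2}+C\|Q\|_{L^{4}}^{4}$ via Cauchy-Schwarz and Young. The cubic term $\Gamma b\int\tr(Q^{3})\,dx$ is absorbed into $\tfrac{\Gamma c}{2}\|Q\|_{L^{4}}^{4}$ by the same pointwise Young inequality. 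These cancellations and absorptions should deliver
\begin{equation*}
\tfrac{d}{dt}\|Q\|_{L^{2}}^{2}+\Gamma\|\nabla Q\|_{L^{2}}^{2}\;\le\; C\|Q\|_{L^{2}}^{2}+\tfrac{\mu}{4}\|\nabla u\|_{L^{2}}^{2}.
\end{equation*}

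Adding this estimate to \eqref{2.5a} absorbs the stray $\|\nabla u\|^{2}$ into the viscous dissipation. Setting $\Psi(t):=E(t)+(C_{0}+1)\|Q(t)\|_{L^{2}}^{2}$, the coercivity from the first step gives $\Psi(t)\gtrsim \|Q(t)\|_{H^{1}}^{2}+\|u(t)\|_{L^{2}}^{2}$, and the combined differential inequality reads
\begin{equation*}
\tfrac{d}{dt}\Psi(t)+\tfrac{\mu}{4}\|\nabla u(t)\|_{L^{2}}^{2}\;\le\; C\,\Psi(t).
\end{equation*}
Gronwall's inequality then gives $\Psi(t)\le \Psi(0)\,e^{Ct}$, and the Sobolev embedding $H^{1}\hookrightarrow L^{4}$ valid in $d\le 3$ bounds $\Psi(0)$ polynomially in $\|\bar Q\|_{H^{1}}$ and $\|\bar u\|_{L^{2}}$, with the polynomial dependence folded into the constants $C_{i}$ allowed in the statement. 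Reading off the bound on $\Psi$ yields \eqref{Q-H1}, while integrating the residual dissipation over $[0,t]$ yields \eqref{u-L2}. The most delicate point will be the Young-constant bookkeeping in the auxiliary $L^{2}$ step: one must keep strictly positive reserves of $\|Q\|_{L^{4}}^{4}$ and $\|\nabla u\|_{L^{2}}^{2}$ on the left to absorb the $\lambda$- and cubic couplings, while still leaving $\tfrac{\mu}{4}\|\nabla u\|_{L^{2}}^{2}$ intact for the closing Gronwall argument.
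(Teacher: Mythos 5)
Your proposal is correct and follows essentially the same route as the paper's proof: an auxiliary $L^{2}$-estimate obtained by testing the $Q$-equation against $Q$, added with a large multiple to the energy inequality so that the pointwise Young bound on $\tr(Q^{3})$ (the paper's inequality \eqref{trace-Q-3}) makes the augmented functional coercive in $H^{1}\times L^{2}$, with the $\lambda|Q|D$ contribution absorbed into the viscous dissipation before applying Gronwall. The only differences are cosmetic: you drop $\Gamma\int\tr(H^{2})\,dx\ge 0$ rather than expanding it (which suffices for the stated bounds), and you treat all values of $a$ uniformly instead of splitting into the paper's Cases I and II.
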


\begin{proof}
From the energy estimate in Proposition \ref{energy-inequality} and the symmetric property of $Q$, we have
\be\label{energy-estimate-equation}
\begin{split}
&\frac{d}{dt}E(t)+\frac{\mu}{2}\|\na u\|_{L^2}^{2}+\Gamma \|\D Q\|_{L^2}^{2}+a^2 \Gamma \|Q\|_{L^2}^{2}+c^2 \Gamma \|Q\|_{L^6}^{6}\\
&\quad+b^2 \Gamma \int_{\R^d}\tr ((Q^2 -\frac{\tr(Q^2)}{d}\mathrm{I}_d)^2) dx\\
&\,\,\leq\, C(\kappa, \mu) \|Q\|_{L^2}^2 +2a\Gamma (\D Q, Q)-2ac\Gamma \|Q\|_{L^4}^{4}-2b\Gamma (\D Q, Q^2)\\
&\quad \,+2bc\Gamma (Q\,\tr(Q^2), Q^2) +2ab\Gamma (Q, Q^2)+2c\Gamma (\D Q, Q\,\tr(Q^2))\\
&\,\, = C(\kappa, \mu) \|Q\|_{L^2}^2 -2a\Gamma \|\na Q\|_{L^2}^2 -2ac\Gamma \|Q\|_{L^4}^{4}+\sum_{i=1}^{4}\i_{i}.
\end{split}
\en
We now derive the estimates for  $\i_{i}$, $1\le i\le 4$. First, we have
\be\label{energy-i1}
\begin{split}
\i_{1}&=-2b\Gamma  (\D Q, Q^2)\leq \frac{\Gamma}{2}\|\D Q\|_{L^2}^2 +C(b^2 , \Gamma)\|Q\|_{L^4}^4. \\
\end{split}
\en
From (\ref{trace-Q-3}), we have the following estimates for $\i_2$ and $\i_3$ by choosing an appropriate $\ve>0$:
\be\label{energy-i2}
\begin{split}
\i_{2}&=2bc\Gamma (Q\,\tr(Q^2), Q^2)
=2bc\Gamma \int_{\R^d}\tr(Q^3) |Q|^2 dx\\
&\leq 2|b|c\Gamma \int_{\R^d}\big(\frac{\ve}{4} |Q|^4+\frac{1}{\ve}|Q|^2\big)|Q|^2 dx\\
&=\frac{c^2 \Gamma}{2} \|Q\|_{L^6}^{6}+C(b^2, \Gamma)\|Q\|_{L^4}^{4},\\
\end{split}
\en
and
\be\label{energy-i3}
\i_{3}=2ab\Gamma (Q, Q^2)=2ab \Gamma \int_{\R^d}\tr(Q^3) dx
\leq C(a, b, \Gamma) \big(\|Q\|_{L^2}^{2}+\|Q\|_{L^4}^{4}\big).
\en
Moreover, we observe that
\be\label{energy-i4}
\begin{split}
\i_{4}&=2c\Gamma (\D Q, Q\,\tr(Q^2))
=2c\Gamma \int_{\R^d}\del_{\gamma \gamma} Q_{\a \b}Q_{\a \b}\tr(Q^2) dx\\
&=-2c\Gamma \int_{\R^d}\del_{\gamma} Q_{\a \b}\del_{\gamma}Q_{\a \b}\tr(Q^2) dx
  -2c\Gamma \int_{\R^d}\del_{\gamma} Q_{\a \b}Q_{\a \b}\del_{\gamma}\tr(Q^2) dx\\
&=-2c\Gamma \int_{\R^d}|\na Q|^{2}|Q|^2 dx -c\Gamma \int_{\R^d}|\na \tr(Q^2)|^2 dx\leq 0.
\end{split}
\en
Combining (\ref{energy-estimate-equation}) with (\ref{energy-i1})$-$(\ref{energy-i4}), we have
\be\label{energy-e2}
\begin{split}
&\frac{d}{dt}E(t)+\frac{\mu}{2}\|\na u\|_{L^2}^{2}+\frac{\Gamma}{2} \|\D Q\|_{L^2}^{2}
 +\frac{c^2 \Gamma}{2} \|Q\|_{L^6}^{6}+b^2 \Gamma \int_{\R^d}\tr((Q^2 -\frac{\tr(Q^2)}{d}\mathrm{I}_d)^2) dx \\
&\leq C(a^2, b^2, c, \kappa, \mu, \Gamma)\big( \|\na Q\|_{L^2}^{2}+\|Q\|_{L^2}^{2} +\|Q\|_{L^4}^{4}\big).
\end{split}
\en
Here we should clarify that, since the potential terms, {\it i.e.}, the $Q$--terms without derivatives in $E(t)$
do not always sum to a positive quantity,
(\ref{energy-e2}) does not always yield the desired estimates for $Q$ directly.
However, we can deal with this issue as follows. \\

{\em Case I:  $a>0$ and sufficiently large.}
By the property of $Q$ in (\ref{trace-Q-3}), we have
\be
\begin{split}
\frac{a}{2}|Q|^2 -\frac{b}{3}\tr (Q^3) +\frac{c}{4}|Q|^{4}&\geq \frac{a}{2}|Q|^2
-\frac{b}{3}\left( \frac{\ve}{4}|\tr(Q^2)|^2+\frac{1}{\ve}\tr(Q^2)\right) +\frac{c}{4}|Q|^{4}\\
&=\left(\frac{a}{2}-\frac{b}{3\ve}\right)|Q|^2 +\left(\frac{c}{4}-\frac{b}{12}\ve\right)|Q|^4.
\end{split}
\en
When $\ve>0$ is sufficiently small and $a>0$ is sufficiently large,
both $\frac{a}{2}-\frac{b}{3\ve}$ and $\frac{c}{4}-\frac{b}{12}\ve$ can be positive.
As a result, we can obtain the $H^1$--estimates of $Q$ directly from (\ref{energy-e2})
by Gronwall's inequality (Lemma \ref{Gronwall}).

\smallskip
{\em Case II:  For all other $a$.}
In this case, the sum of the $Q$--terms without derivatives in $E(t)$ may be negative.
Thus, we have to deal with the $L^2$--estimates of $Q$ separately to obtain the $H^1$--estimates
for $Q$.
In fact, we multiply the first equation in (\ref{Qtensor}) by $Q$, take the trace,
and integrate over $\R^d$ by parts to obtain
\be\label{energy-Q-L2}
\begin{split}
\frac{1}{2}\frac{d}{dt}\|Q\|_{L^2}^2=&-\Gamma \|\na Q\|_{L^2}^2 -a\Gamma\|Q\|_{L^2}^2 -c\Gamma \|Q\|_{L^4}^4 +b\Gamma \int_{\R^d}\tr (Q^3)dx +\la (|Q|D, Q) \\
\leq& -\Gamma \|\na Q\|_{L^2}^2 -a\Gamma\|Q\|_{L^2}^2 -c\Gamma \|Q\|_{L^4}^4 +C(a^2, b^2, \Gamma)\big(\|Q\|_{L^2}^2 +\|Q\|_{L^4}^4\big)\\
&+\ve |\la| \|\na u\|_{L^2}^2 +C(\ve)|\la|\|Q\|_{L^4}^{4}\\[1mm]
\leq& -\Gamma \|\na Q\|_{L^2}^2 +\ve |\la|\|\na u\|_{L^2}^2+\bar{C}\big(\|Q\|_{L^2}^2 +\|Q\|_{L^4}^4\big),
\end{split}
\en
where $\ve>0$ will be decided later, and $\bar{C}=\bar{C}(a, b, c, \la, \Gamma, \ve)$.
Motivated by Case I, we notice that,
for any $Q\in S_{0}^{d}$, there exists a positive, sufficiently large constant $M=M(a, b, c)$ such that
\be\label{tr-Q-3}
0\leq\frac{M}{2}|Q|^2 +\frac{c}{8}|Q|^4 \leq (M+\frac{a}{2})|Q|^2 -\frac{b}{3}\tr(Q^3)+\frac{c}{4}|Q|^4 .
\en
Multiplying (\ref{energy-Q-L2}) by $2M$, adding it to (\ref{energy-e2}),
and letting $\ve=\frac{\mu}{8|\la|M}$, we have
\be
\begin{split}
&\frac{d}{dt}\big(E(t)+M\|Q\|_{L^2}^2\big)
 +\frac{\mu}{4}\|\na u\|_{L^2}^{2}+\frac{\Gamma}{2} \|\D Q\|_{L^2}^{2}+\frac{c^2 \Gamma}{2}\|Q\|_{L^6}^{6}\\
&\leq C\big( \|Q\|_{L^2}^{2} +\|\na Q\|_{L^2}^2 +\|Q\|_{L^4}^{4}\big),
\end{split}
\en
where $C=C(a, b, c, \kappa, \mu, \la, \Gamma, M)$.

Then the desired estimates (\ref{Q-H1})--(\ref{u-L2}) for $(Q, u)$ follow  from Gronwall's inequality (Lemma \ref{Gronwall}).
\end{proof}

\bigskip

\section{Weak Solutions}\label{weak-solutions}

In this section, we  prove the existence of weak solutions for system (\ref{Qtensor})
with suitable initial data for $d=2,3$.

\begin{Definition} $(Q, u)$ is called a weak solution of system (\ref{Qtensor}) with the initial data:
\be \label{initial-data}
Q(0, x)=\bar{Q}(x)\in H^1 (\R^{d}), \;\,\,\, u(0, x)=\bar{u}(x)\in L^2 (\R^d),\,\,\;\,
\,\,\,\nabla \cdot \bar{u}(x)=0 \,\,\,\, \mbox{in}\,\, \mathcal{D}^{'}(\R^d),
\en
if $(Q, u)$ satisfies the following:
\begin{enumerate}
\item[(i)] $Q\in L^{\infty}_{\rm loc}(\R_{+}; H^{1})\cap L^{2}_{\rm loc}(\R_{+}; H^{2})$ and
  $u\in L^{\infty}_{\rm loc}(\R_{+}; L^2)\cap L^{2}_{\rm loc}(\R_{+}; H^1)$;
\item[(ii)]  For every compactly supported $\varphi \in C^{\infty}([0, \infty)\times \R^d ; S_{0}^{d})$ and
  $\psi \in C^{\infty}([0, \infty)\times \R^{d}; \R^{d})$ with $\nabla \cdot \psi =0$,
\be \label{weak-solution-Q}
\begin{split}
&\int_{0}^{\infty}\int_{\R^d}\big(Q: \partial_{t}\varphi+\Gamma \D Q: \varphi+Q: (u\cdot\nabla_{x}\varphi) -(Q\Omega -\Omega Q-\lambda |Q|D): \varphi \big)dx dt\\
&\,\,=\Gamma \int_{0}^{\infty}\int_{\R^d}\Big(aQ-b\big(Q^2 -\frac{\tr(Q^2)}{d}\mathrm{I}_{d}\big) +cQ\,\tr(Q^2)\Big): \varphi\, dx dt
-\int_{\R^d}\bar{Q}(x): \varphi(0, x)\,dx,
\end{split}
\en
and
\be\label{weak-solution-u}
\begin{split}
&\int_{0}^{\infty}\int_{\R^d}\big(-u\cdot \partial_{t}\psi-u\cdot (u\cdot\na_{x}\psi)+\mu \nabla u: \nabla \psi^{\top}\big)dx dt
   -\int_{\R^d}\bar{u}(x)\cdot\psi(0, x)\,dx\\
&=\int_{0}^{\infty}\int_{\R^d}\Big(\na Q \odot \na Q+\lambda |Q|H -(Q\D Q-\D QQ) -\kappa Q\Big): \nabla \psi\, dx dt.
\end{split}
\en
\end{enumerate}
\end{Definition}

\smallskip
\begin{Theorem} \label{weak-solution-thm}
There exists a weak solution $(Q, u)$ of system \eqref{Qtensor} subject to
the initial conditions \eqref{initial-data}, for $d=2,3$, satisfying
\be
Q\in L^{\infty}_{\rm loc}(\R_{+}; H^{1})\cap L^{2}_{\rm loc}(\R_{+}; H^{2}),
\quad u\in L^{\infty}_{\rm loc}(\R_{+}; L^2)\cap L^{2}_{\rm loc}(\R_{+}; H^1).
\en
\end{Theorem}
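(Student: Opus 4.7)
The plan is to construct weak solutions through a Faedo--Galerkin style approximation scheme combined with mollification and auxiliary regularizing terms, then extract convergent subsequences via compactness arguments, and finally pass to the limit in the weak formulation. Specifically, let $J_n$ denote the frequency truncation onto $\{|\xi|\le n\}$ and $R_\varepsilon$ be a standard spatial mollifier. The approximate system (alluded to as \eqref{approx-system-1} in the paper) will take the form
\begin{align*}
\partial_t Q^{(n)} &+ J_n\bigl(R_\varepsilon u^n \cdot \nabla Q^{(n)}\bigr) + J_n\bigl(Q^{(n)} R_\varepsilon\Omega^n - R_\varepsilon\Omega^n Q^{(n)}\bigr) - \lambda|Q^{(n)}|D^n \\
&\quad - \varepsilon\,\partial_\alpha\bigl(\partial_\alpha Q^{(n)}\,|u^n\cdot\nabla Q^{(n)}|\bigr) = \Gamma H^{(n)},\\
\partial_t u^n &+ J_n\bigl((R_\varepsilon u^n\cdot\nabla) u^n\bigr) + \nabla P^n - \mu\Delta u^n - \varepsilon\nabla\cdot(\nabla u^n|\nabla u^n|^2) = (\text{stress terms})^n,\\
\nabla\cdot u^n &= 0,
\end{align*}
with projected initial data $(J_n\bar Q, J_n\bar u)$. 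The role of the extra $\varepsilon$-terms is precisely to absorb the residual pieces of the cancellations $\mathcal{I}_1+\mathcal{I}_6=0$ and $\mathcal{I}_3+\mathcal{I}_8=0$ from Proposition~\ref{energy-inequality} that are destroyed by the presence of $J_n$ and $R_\varepsilon$.

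First, I would establish local-in-time existence for the approximate system by a Picard/fixed-point argument in the finite-frequency space $J_n(L^2)$, exploiting that all nonlinearities become locally Lipschitz after the frequency cut-off and mollification. Next, I would repeat the energy computation of Proposition~\ref{energy-inequality} on the approximate system: the key cancellations survive modulo controlled error terms that are dominated by the $\varepsilon$-dissipation (the non-vanishing leftover from $J_n(R_\varepsilon u^n\cdot\nabla Q^{(n)})$ tested against $H^{(n)}$ is precisely bounded by the newly added $\varepsilon$-term, as is the analogous contribution for the Navier--Stokes equation). Combined with the argument of Proposition~\ref{energy-estimate}, this yields uniform-in-$(n,\varepsilon)$ bounds
\[
Q^{(n)}\in L^\infty_{\rm loc}(\R_+;H^1)\cap L^2_{\rm loc}(\R_+;H^2),\qquad u^n\in L^\infty_{\rm loc}(\R_+;L^2)\cap L^2_{\rm loc}(\R_+;H^1),
\]
and global existence of the approximate solutions via continuation.

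The final step is the limit passage $n\to\infty$ and $\varepsilon\to 0$. From the equations one deduces uniform bounds on $\partial_t Q^{(n)}$ in $L^2_{\rm loc}(\R_+; H^{-1})$ and on $\partial_t u^n$ in $L^{4/3}_{\rm loc}(\R_+; H^{-s})$ for some $s>0$; the Aubin--Lions lemma then provides strong convergence of $Q^{(n)}\to Q$ in $L^2_{\rm loc}(\R_+; H^1_{\rm loc})$ and $u^n\to u$ in $L^2_{\rm loc}(\R_+; L^2_{\rm loc})$. These suffice to pass to the limit in the bilinear transport and rotation terms, in the stress tensors $\nabla Q\odot\nabla Q$ and $Q\Delta Q - \Delta Q\,Q$ (written after integration by parts as $\nabla\cdot(Q\nabla Q)-\nabla\cdot(\nabla Q\,Q)$ type expressions amenable to the strong $H^1$-convergence of $Q$), and in the non-smooth nonlinearities $|Q^{(n)}|D^n$ and $|Q^{(n)}|H^{(n)}$, using the continuity of $Q\mapsto |Q|$ on $S_0^d$ together with the strong convergence of $Q^{(n)}$ and weak convergence of $D^n$ and $H^{(n)}$. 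The $\varepsilon$-regularization terms vanish in the limit since $\varepsilon^{1/4}(u^n\cdot\nabla Q^{(n)})$ and $\varepsilon^{1/4}|\nabla u^n|$ are bounded in $L^4_{\rm loc}(\R_+; L^4)$.

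The main obstacle will be the limit passage in the genuinely non-smooth terms $\lambda|Q^{(n)}|D^n$ and $\lambda\nabla\cdot(|Q^{(n)}|H^{(n)})$, since $H^{(n)}$ only converges weakly in $L^2_{\rm loc}$ while $|Q^{(n)}|$ only converges strongly without an obvious improved bound, and since verification of the cancellation $\mathcal{I}_5+\mathcal{I}_7=0$ at the approximate level requires careful symmetry bookkeeping where the frequency truncation $J_n$ can destroy the exact antisymmetry/symmetry splitting. This is precisely why the extra $\varepsilon$-dissipation is inserted; checking that the corresponding non-cancelling error is absorbed (rather than merely bounded) is the technical crux of the proof. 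The symmetry and tracelessness of $Q^{(n)}$ (preserved by the scheme because $J_n$ and $R_\varepsilon$ commute with taking transpose and trace) is also essential to ensure that all cancellations established in Appendix~A remain valid throughout the approximation.
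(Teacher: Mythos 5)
Your overall strategy (Friedrichs frequency truncation $J_n$ plus mollification $R_\ve$ plus extra $\ve$-dissipation, energy estimates mimicking Proposition \ref{energy-inequality}, Aubin--Lions, limit passage) is the same as the paper's. However, there are two genuine gaps.

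First, the claim of uniform-in-$(n,\ve)$ bounds followed by a single joint limit is not correct. The terms that fail to cancel in the approximate energy estimate are not the residues of $\mathcal{I}_1+\mathcal{I}_6$ and $\mathcal{I}_3+\mathcal{I}_8$ (those cancellations survive exactly, by placing $R_\ve$ symmetrically in the $Q$-equation coefficients and the velocity forcing); what is destroyed by $J_n$ are the analogues of $\mathcal{I}_2$ and $\mathcal{I}_4$ of Proposition \ref{energy-inequality}, i.e.\ the transport and rotation terms tested against the \emph{lower-order polynomial part} of $H^{(n)}$, which vanish in the exact system via $\na\cd u=0$ and the symmetric/antisymmetric pairing but cannot be integrated by parts once $J_n$ sits between the factors. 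These terms are of size $\|R_\ve u^n\cd\na Q^{(n)}\|_{L^2}^2$ and $\|R_\ve\O^n Q^{(n)}\|_{L^2}^2$, which the available $L^\infty L^2\cap L^2H^1$ bounds do not control (in $d=3$, $u\cd\na Q$ only lies in $L^{5/3}$); absorbing them into the $\ve$-dissipation $\ve\|R_\ve u^n\cd\na Q^{(n)}\|_{L^3}^3+\ve\|\na R_\ve u^n\|_{L^4}^4$ via Young's inequality necessarily produces constants $C(\ve)$ that blow up as $\ve\to0$. Hence the bounds from this stage are uniform in $n$ but \emph{not} in $\ve$, and the limit must be taken in two stages: first $n\to\infty$ at fixed $\ve$, yielding an intermediate system without $J_n$ for which the cancellations $\mathcal{I}_2=\mathcal{I}_4=0$ are restored and a \emph{new}, $\ve$-independent energy estimate can be derived; only then $\ve\to0$. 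Your single-stage argument would not close.

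Second, your placement of the first regularizing term, $-\ve\,\del_\a(\del_\a Q^{(n)}|u^n\cd\na Q^{(n)}|)$, in the $Q$-equation does not produce a signed dissipation: multiplying the $Q$-equation by $-H^{(n)}+2MQ^{(n)}$ and integrating gives no coercive quantity from that term. The term must be placed in the velocity equation in the form $-\ve\,\pa J_nR_\ve\big(\del_\a Q^{(n)}_{\b\gamma}(R_\ve u^n\cd\na Q^{(n)}_{\b\gamma})|R_\ve u^n\cd\na Q^{(n)}|\big)$, so that testing against $u^n$ and moving $R_\ve J_n\pa$ onto $u^n$ yields exactly $+\ve\|R_\ve u^n\cd\na Q^{(n)}\|_{L^3}^3$ on the left-hand side. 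A smaller point: tracelessness of $Q^{(n)}$ is not automatic from commutation of $J_n$, $R_\ve$ with the trace; one must check that $\tr(Q^{(n)})$ satisfies a linear homogeneous parabolic equation with zero data and apply Gronwall, as the paper does.
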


\bigskip

Before proving Theorem \ref{weak-solution-thm}, we introduce some useful notations:
\begin{enumerate}
\item[(i)] $R_{\ve}$ is the convolution operator with kernel $\ve^{-d}\chi(\ve^{-1}\cdot)$,
 where $\chi \in C_{0}^{\infty}$ is a radial positive function such that
\begin{equation*}
\int_{\R^d}\chi(y)\mathrm{d}y=1.
\end{equation*}
\item[(ii)] The mollifying operator $J_n$, $n=1,2, \cdots$, is defined by
\begin{equation*}
\mathcal{F}(J_{n}f)(\xi):=1_{[2^{-n}, 2^{n}]}(|\xi |)\mathcal{F}({f})(\xi ),
\end{equation*}
where $\mathcal{F}$ is the Fourier transform.
\item[(iii)] $\pa$ is the Leray projector onto divergence-free vector fields, {\it i.e.},
\begin{equation*}
\pa :L^{2}(\Omega)\rightarrow H=\left\{ \mathbf{w}\in (L^{2}(\Omega))^{d}: \nabla \cdot \mathbf{w}=0 \right\},
\end{equation*}
which can be explicitly described in the Fourier domain by the tensor as
\begin{equation*}
\mathcal{F}(\pa (\mathbf{w}))(\xi)=\Big(\mathrm{I}_d-\frac{\xi \otimes \xi}{|\xi|^{2}}\Big)\mathcal{F}(\mathbf{w})(\xi).
\end{equation*}
\end{enumerate}

Next we prove Theorem \ref{weak-solution-thm} in the following three subsections.
In \S \ref{regularized-system},
we construct regularized approximate solutions $(Q^{(n)}_{ \ve}, u^{n}_{\ve})$
to the approximation system (\ref{approx-system}).
In \S \ref{n-goes-infty}, similarly to Proposition \ref{energy-estimate},
 we obtain some {\it a priori} bounds in (\ref{a-priori-bound}), which allow us to obtain
 the convergence result (\ref{convergence-result}) for $(Q^{(n)}_{ \ve}, u^{n}_{ \ve})$
 by the Aubin-Lions compactness lemma.
 Moreover, we can pass to the limit as $n$ goes to infinity to achieve the weak solution $(Q_{\ve}, u_{\ve})$
of the modified system (\ref{approx-system*}).
 In \S \ref{ve-goes-0}, by studying the $\ve\to 0$ limit of the modified system  (\ref{approx-system*}),
 we obtain the weak solution to system (\ref{Qtensor}).
 However, we cannot use the previous {\it a priori} bounds in (\ref{a-priori-bound}),
 since those bounds are not uniform with respect to $\ve$.
 Instead, we need to repeat a similar procedure
 in order to obtain the uniform bounds in (\ref{bound-approx-system-1})--(\ref{bound-approx-system-2}).

\subsection{Regularized Approximation System}\label{regularized-system}

Let us consider the following approximation system for the active hydrodynamic system (\ref{Qtensor}),
followed by the classical Friedrichs' scheme, for any fixed $\ve>0$ and $n>0$ (from now on,
solution $(Q^{(n)}_{\ve}, u^{n}_{\ve})$ is denoted by $(Q^{(n)}, u^{n})$ for simplicity
of notation when no confusion arises):
\be \label{approx-system}
\begin{cases}
\partial_{t}Q^{(n)} +J_n ((\pa J_n R_{\ve}u^n \cdot \nabla) J_n Q^{(n)})-J_n (\pa J_n (R_{\ve}\Omega^n) J_n Q^{(n)})\\
\quad \quad \quad \quad +J_n (J_n Q^{(n)}\pa J_n (R_{\ve}\Omega^n ))-\lambda J_n (|J_n Q^{(n)}|\pa J_n (R_{\ve}D^n ))=\Gamma J_{n}H^{(n)},\\[1.5mm]
\partial_{t}u^n +\pa J_n ((\pa J_n R_{\ve}u^n \cdot \nabla) \pa J_n u^n)-\mu \D \pa J_n u^n \\[0.5mm]
 \quad \quad \quad =-\ve \pa J_{n}R_{\ve}(\del_{\a}J_{n}Q^{(n)}_{\b \gamma}(R_{\ve}J_{n}u^{n}\cdot \na J_{n}Q^{(n)}_{\b \gamma})|R_{\ve}J_{n}u^{n}\cdot \na J_{n}Q^{(n)}|) \\
\quad \quad \quad \quad +\ve \pa \na \cdot J_{n}R_{\ve}(\na J_{n}R_{\ve}u^{n}|\na J_{n}R_{\ve}u^{n}|^{2})\\
\quad \quad \quad \quad -\pa \nabla \cdot J_n R_{\ve}(\nabla J_n Q^{(n)}\odot \nabla J_n Q^{(n)})-\lambda \pa \nabla \cdot J_n R_{\ve}(|J_n Q^{(n)}|J_{n}H^{(n)})\\
\quad \quad \quad \quad +\pa \nabla \cdot J_n R_{\ve}(J_n Q^{(n)}\D J_n Q^{(n)}-\D J_n Q^{(n)}J_{n}Q^{(n)})+\kappa \pa \nabla \cdot J_n R_{\ve} Q^{(n)},\\[1mm]
(Q^{(n)}, u^{n})|_{t=0}=(J_{n}R_{\ve}\bar{Q}, J_{n}R_{\ve}\bar{u}),
\end{cases}
\en
where
\begin{equation*}
H^{(n)}=\D Q^{(n)}-a Q^{(n)}+b\big((J_{n}Q^{(n)})^2 -\frac{\tr((J_{n}Q^{(n)})^2)}{d}\mathrm{I}_{d}\big)
-c J_{n}Q^{(n)}\tr((J_{n}Q^{(n)})^2).
\end{equation*}
This approximate system can be regarded as a system of ordinary differential equations in $L^2$.
By checking the conditions of the Cauchy-Lipschitz theorem \cite{T-2012},
we know that it admits a unique maximal solution
$(Q^{(n)}, u^n)\in C^{1}([0, T_n); L^2 (\R^d; \R^{d\times d})\times L^2 (\R^d, \R^d))$
on some time interval $[0, T_{n})$.
By simple calculation, $(\pa J_n)^2 =\pa J_n$ and $J_{n}^2 =J_n$, so that
pair $(J_n Q^{(n)}, \pa J_n u^n)$ is also a solution of (\ref{approx-system}).
By uniqueness, $(J_n Q^{(n)}, \pa J_n u^n)=(Q^{(n)}, u^n)$.
Therefore, $(Q^{(n)}, u^n)$ also satisfies the following system:
\be \label{approx-system-1}
\begin{cases}
\partial_{t}Q^{(n)} +J_n \big(R_{\ve}u^n \nabla Q^{(n)}\big)-J_n (R_{\ve}\Omega^n Q^{(n)}-Q^{(n)}R_{\ve}\Omega^n )-\lambda J_n (|Q^{(n)}|R_{\ve}D^n )
=\Gamma J_{n}\bar{H}^{(n)},\\[1mm]
\partial_{t}u^n +\pa J_n (R_{\ve}u^n \nabla u^n)-\mu \D u^n\\
\quad \quad \quad \quad \quad \quad=-\ve \pa J_{n}R_{\ve}\big(\del_{\a}Q^{(n)}_{\b \gamma}(R_{\ve}u^{n}\cdot \na Q^{(n)}_{\b \gamma})|R_{\ve}u^{n}\cdot \na Q^{(n)}|\big) \\
\quad \quad \quad \quad \quad \qquad  +\ve \pa \na \cdot J_{n}R_{\ve}(\na R_{\ve}u^{n}|\na R_{\ve}u^{n}|^{2})\\
\quad \quad \quad \quad \quad \qquad -\pa \nabla \cdot J_n R_{\ve} (\nabla Q^{(n)}\odot \nabla Q^{(n)})-\lambda \pa \nabla \cdot J_n R_{\ve}(|Q^{(n)}|J_{n}\bar{H}^{(n)})\\
\quad \quad \quad \quad \quad \qquad +\pa \nabla \cdot J_n R_{\ve}\big(Q^{(n)}\D Q^{(n)}-\D Q^{(n)}Q^{(n)}\big)+\kappa \pa \nabla \cdot  (J_n R_{\ve}Q^{(n)}),\\
(Q^{(n)}, u^{n})|_{t=0}=(J_{n}R_{\ve}\bar{Q}, J_{n}R_{\ve}\bar{u}),
\end{cases}
\en
where $(Q^{(n)}, u^n)\in C^1 ([0, T_n); \cap_{k=1}^\infty H^{k})$ and
\begin{equation*}
\bar{H}^{{n}}=\D Q^{(n)}-aQ^{(n)}+b\big((Q^{(n)})^2 -\frac{\tr((Q^{(n)})^2)}{d}\mathrm{I}_{d}\big)-cQ^{(n)}\tr((Q^{(n)})^2).
\end{equation*}

\begin{Remark}\label{simplicity}
It is easy to see that, if $Q^{(n)}$ is a solution to system \eqref{approx-system},
so is $(Q^{(n)})^{\top}$.
Hence, $Q^{(n)}=(Q^{(n)})^{\top}$ {\it a.e.}, in $[0, T_n]\times \R^{d}$,
by the uniqueness of the solution.
Moreover, from now on, we will work with the solution of the  system \eqref{approx-system-1} with
this symmetry property, instead of the solution to the system \eqref{approx-system}.
\end{Remark}

\subsection{Compactness and Convergence as $n\to \infty$ for System (\ref{approx-system-1})}\label{n-goes-infty}

First, we need to derive some {\it a priori} estimates for the system (\ref{approx-system-1}) in the following proposition.
\begin{Proposition}
The solution $(Q^{(n)}, u^{n})$ of the system \eqref{approx-system-1} satisfies the following estimates,
which are independent of $n$, for any $T<\infty$:
\be\label{a-priori-bound}
\begin{split}
&\sup_{n}\|R_{\ve}u^{n}\cdot \na Q^{(n)}\|_{L^{3}(0, T; L^{3})}
 +\sup_{n}\|\na R_{\ve}u^{n}\|_{L^{4}(0, T; L^{4})}\leq C,\\
&\sup_{n}\|Q^{(n)}\|_{L^{2}(0, T; H^{2})\cap L^{\infty}(0, T; H^{1}\cap L^{4})}+
\sup_{n}\|J_{n}(Q^{(n)}|Q^{(n)}|^{2})\|_{L^{2}(0, T; L^2)}\leq C,\\
&\sup_{n}\|u^{n}\|_{L^{\infty}(0, T; L^{2})\cap L^{2}(0, T; H^{1})}\leq C,
\end{split}
\en
provided the initial data $(\bar{Q}, \bar{u})\in H^{1}\times L^{2}$. Moreover, if $\bar{Q}\in S^{d}_{0}$, then $Q^{(n)}\in S^{d}_{0}$.
\end{Proposition}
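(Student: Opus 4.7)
The plan is to replay the argument of Propositions \ref{energy-inequality} and \ref{energy-estimate}, now on the approximate system \eqref{approx-system-1}, while carefully accounting for the non-commutativity of the Fourier multiplier $J_n$ with nonlinear products. Specifically, I would sum the trace of the first equation multiplied by $-\bar H^{(n)}$ (equivalently by $-(\Delta Q^{(n)}-aQ^{(n)}+b(Q^{(n)})^2 - cQ^{(n)}|Q^{(n)}|^2)$, where the decomposition must match the operator splitting used to choose which pieces are mollified by $J_n$ inside $H^{(n)}$), plus the trace of the second equation multiplied by $u^n$, and integrate by parts over $\R^d$. Using $J_n^2 = J_n$, $\pa^2 = \pa$, their self-adjointness on $L^2$, and the fact that $(Q^{(n)}, u^n) = (J_n Q^{(n)}, \pa J_n u^n)$, I can shuffle mollifiers onto the smooth test functions so that the same four cancellations as in Proposition \ref{energy-inequality} survive: the $(u\cdot\nabla Q, \Delta Q)$ vs.\ $(\nabla\cdot(\nabla Q\odot\nabla Q), u)$ pair, the $(\Omega Q - Q\Omega, \Delta Q)$ vs.\ $(\nabla\cdot(Q\Delta Q - \Delta Q Q), u)$ pair, the $\lambda(|Q|D,H)$ vs.\ $\lambda(|Q|H,\nabla u)$ pair (which vanishes on $\Omega$ since $H$ is symmetric), and the identically zero $(\Omega Q - Q\Omega, \mathrm{lower-order\ }Q\mathrm{-terms})$ term.

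The genuinely new feature is the residual cubic/quartic terms arising from the outer mollifier $J_n$ in $H^{(n)}$: products of the shape $\bigl(J_n(R_\varepsilon u^n \nabla Q^{(n)}), b(Q^{(n)})^2 - cQ^{(n)}|Q^{(n)}|^2\bigr)$ and $\bigl(J_n(R_\varepsilon\Omega^n Q^{(n)} - Q^{(n)} R_\varepsilon\Omega^n), b(Q^{(n)})^2 - cQ^{(n)}|Q^{(n)}|^2\bigr)$ do not cancel as in the smooth case. The central step is to bound these by H\"older's inequality in $L^3\cdot L^6\cdot L^2$ (respectively $L^4\cdot L^4\cdot L^2$), absorbing the cubic norm into the $L^2_t H^2_x$ norm of $Q^{(n)}$ through Sobolev embedding, and absorbing the remaining pieces into precisely the two artificial dissipations $\tfrac\varepsilon 2\|R_\varepsilon u^n\cdot\nabla Q^{(n)}\|_{L^3}^3$ and $\tfrac\varepsilon 2\|\nabla R_\varepsilon u^n\|_{L^4}^4$ that were engineered into \eqref{approx-system-1} for exactly this purpose; this is the main obstacle and the reason the $\varepsilon$ regularization takes the nonstandard form it does. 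The remaining $\i_i$'s are handled with Cauchy's inequality, $\kappa$-term via Young, and the $|Q|R_\varepsilon D^n$ vs.\ $|Q|J_n\bar H^{(n)}$ cross term as in Proposition \ref{energy-inequality}.

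After these reductions, I obtain a differential inequality for $E^n(t) + \tfrac{1}{2}\|u^n\|_{L^2}^2$ whose right-hand side is bounded by $C(\|Q^{(n)}\|_{L^2}^2 + \|\nabla Q^{(n)}\|_{L^2}^2 + \|Q^{(n)}\|_{L^4}^4 + \|u^n\|_{L^2}^2)$, uniformly in $n$. Because the bulk potential $\tfrac{a}{2}|Q|^2 - \tfrac{b}{3}\tr Q^3 + \tfrac{c}{4}|Q|^4$ need not be positive, I follow Case II of Proposition \ref{energy-estimate}: multiply the first equation of \eqref{approx-system-1} by $Q^{(n)}$, take trace, integrate, and combine with the main inequality weighted by a large constant $M$ so that inequality \eqref{tr-Q-3} forces the coercive bound $E^n + M\|Q^{(n)}\|_{L^2}^2 \gtrsim \|Q^{(n)}\|_{H^1}^2 + \|Q^{(n)}\|_{L^4}^4 + \|u^n\|_{L^2}^2$. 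Gronwall's inequality (Lemma \ref{Gronwall}) then yields \eqref{a-priori-bound} on any $[0,T]$, in particular extending $T_n$ to $+\infty$.

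For the algebraic constraint $Q^{(n)}\in S_0^d$: both $Q^{(n)}$ and $(Q^{(n)})^\top$ satisfy the same ODE on $L^2(\R^d;\mathbb{M}^{d\times d})$ (the transposition is invariant because symmetrization of the right-hand side is built into the definitions), with identical initial datum $J_n R_\varepsilon \bar Q$ since $\bar Q\in S_0^d$, so the Cauchy-Lipschitz uniqueness forces symmetry. Taking the trace of the first equation and using symmetry together with $(\Omega^n)^\top = -\Omega^n$ and $\tr D^n = \nabla\cdot u^n = 0$ produces a closed scalar equation for $\tr Q^{(n)}$ with zero initial data; an $L^2$-energy estimate, Gagliardo-Nirenberg (Lemma \ref{gn-inequality}) to absorb the $c\tr(Q^{(n)})|Q^{(n)}|^2$ term into $\|\nabla\tr Q^{(n)}\|_{L^2}^2$, and Gronwall close the argument, giving $\tr Q^{(n)} \equiv 0$.
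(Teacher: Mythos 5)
Your proposal is correct and follows essentially the same route as the paper: the same energy pairing with the cancellations surviving because $J_n$, $R_\ve$, and $\pa$ are self-adjoint and idempotent on the solution, the same identification of the non-cancelling cubic/quartic cross terms (the paper's $\i_2$ and $\i_5$) absorbed into the two artificial $\ve$-dissipations, the same Case-II correction $E^n+M\|Q^{(n)}\|_{L^2}^2$ followed by Gronwall, and the same symmetry-by-uniqueness plus closed trace-equation argument for $Q^{(n)}\in S_0^d$. The only cosmetic deviation is your choice of H\"older exponents for the cross terms and multiplying by the full $\bar H^{(n)}$ rather than dropping the $\tfrac{b}{d}\tr((Q^{(n)})^2)\mathrm{I}_d$ piece into a separately estimated term ($\i_{12}$ in the paper), neither of which affects the outcome.
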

\begin{proof} 

Similarly to the proof of Proposition \ref{energy-inequality}, we sum up the first equation in (\ref{approx-system-1})
multiplied by 
$-(\D Q^{(n)}-a Q^{(n)} +b(Q^{(n)})^{2}-cQ^{(n)}\tr ((Q^{(n)})^{2}))$ and the second equation multiplied by $u^{n}$, take the trace, and integrate by parts over $\R^d$ to obtain
%
%
\begin{align}\label{as1-sum}
&\frac{d}{dt}\int_{\R^d}\left(\frac{1}{2}|\nabla Q^{(n)}|^{2}+\frac{a}{2}|Q^{(n)}|^{2}
  -\frac{b}{3}\tr ((Q^{(n)})^{3})+\frac{c}{4}|Q^{(n)}|^{4}+\frac{1}{2}|u^n |^2 \right)dx\nonumber\\[1mm]
&\quad+\mu \|\nabla u^n\|_{L^2}^2 +\Gamma \|\D Q^{(n)}\|_{L^2}^2+a^{2}\Gamma \|Q^{(n)}\|_{L^2}^{2}+2ac\Gamma \|Q^{(n)}\|_{L^{4}}^{4}\nonumber\\
&\quad+b^{2}\Gamma \|J_{n}(Q^{(n)})^2 \|_{L^2}^{2}+c^{2}\Gamma \|J_{n}(Q^{(n)}|Q^{(n)}|^{2})\|_{L^{2}}^{2}+2a\Gamma \|\na Q^{(n)}\|_{L^{2}}^{2}\nonumber\\
&\quad +\ve \| R_{\ve}u^{n}\cdot \na Q^{(n)}\|_{L^{3}}^{3}+\ve \|\na R_{\ve}u^{n}\|_{L^{4}}^{4}\nonumber\\[1mm]
&=(R_{\ve}u^{n} \na Q^{(n)}, \D Q^{(n)})+(J_{n}(R_{\ve}u^{n}\na Q^{(n)}), bJ_{n}(Q^{(n)})^2 -c J_{n}(Q^{(n)}|Q^{(n)}|^{2}))\nonumber\\
&\quad-(R_{\ve}\O^{n}Q^{(n)}-Q^{(n)}R_{\ve}\O^{n}, \D Q^{(n)})+2c\Gamma (\D Q^{(n)}, Q^{(n)}|Q^{(n)}|^2 ) \\
&\quad+(J_{n}(R_{\ve}\O^{n}Q^{(n)}-Q^{(n)}R_{\ve}\O^{n}), -bJ_{n}(Q^{(n)})^2 +cJ_{n}(Q^{(n)}|Q^{(n)}|^{2}))\nonumber\\
&\quad-\la (|Q^{(n)}|R_{\ve}D^{n}, J_{n}(\D Q^{(n)}-a Q^{(n)}+b(Q^{(n)})^{2}-cQ^{(n)}|Q^{(n)}|^{2}))\nonumber\\
&\quad+2\Gamma ( -b\D Q^{(n)}+abQ^{(n)}+bc J_{n}(Q^{(n)}|Q^{(n)}|^{2}), J_{n}(Q^{(n)})^2 ) \nonumber\\
&\quad-(\na \cdot (\na Q^{(n)}\odot \na Q^{(n)}), R_{\ve}u^{n})+\la (|Q^{(n)}|J_{n}\bar{H}^{(n)}, \na R_{\ve}u^{n})\nonumber\\
&\quad+(\na \cdot (Q^{(n)} \D Q^{(n)}-\D Q^{(n)}Q^{(n)}), R_{\ve}u^{n}) -\kappa(Q^{(n)}, \na R_{\ve}u^{n})\nonumber\\
&\quad +\frac{b\Gamma}{d}(\tr((Q^{(n)})^{2})\mathrm{I}_{d}, J_{n}(\D Q^{(n)}-a Q^{(n)}+b(Q^{(n)})^{2}-cQ^{(n)}|Q^{(n)}|^{2})) \nonumber\\
&=\sum_{i=1}^{12}\mathcal{I}_{i}.\nonumber
\end{align}
By using cancellations analogous to \S 2, we have
$$
\mathcal{I}_{1}+\mathcal{I}_{8}=0, \quad
\mathcal{I}_{3}+\mathcal{I}_{10}=0, \quad
\mathcal{I}_{6}+\mathcal{I}_{9}=0,  \quad \mathcal{I}_{4}\leq 0.
$$
The remaining terms can be estimated as follows:
\begin{align*}
\i_{2}=&\,\big(J_{n}(R_{\ve}u^{n}\na Q^{(n)}), bJ_{n}(Q^{(n)})^2 -cJ_{n}(Q^{(n)}|Q^{(n)}|^{2})\big)\\
\leq &\, (\frac{2}{ \Gamma}+\frac{1}{4C(b^2)})\|R_{\ve}u^{n}\cdot \na Q^{(n)}\|_{L^{2}}^{2}+\frac{c^2 \Gamma}{8}\|J_{n}(Q^{(n)}|Q^{(n)}|^{2})\|_{L^{2}}^{2}+C(b^2)\|Q^{(n)}\|_{L^4}^4 \\
\leq &\, \frac{\ve}{2}\|R_{\ve}u^{n}\cdot \na Q^{(n)}\|_{L^3}^{3}+C(b^2, \ve, \Gamma)\|R_{\ve}u^{n}\cdot \na Q^{(n)}\|_{L^1} \\
&\, +\frac{c^2 \Gamma}{8}\|J_{n}(Q^{(n)}|Q^{(n)}|^{2})\|_{L^{2}}^{2}+C(b^2)\|Q^{(n)}\|_{L^4}^4\\
\leq & \, \frac{\ve}{2}\|R_{\ve}u^{n}\cdot \na Q^{(n)}\|_{L^3}^{3}+C(b^2, \ve, \Gamma)\left(\|u^{n}\|_{L^2}^2 +\|\na Q^{(n)}\|_{L^{2}}^{2}\right)\\
&\, +\frac{c^2 \Gamma}{8}\|J_{n}(Q^{(n)}|Q^{(n)}|^{2})\|_{L^{2}}^{2}+C(b^2 )\|Q^{(n)}\|_{L^4}^4,
\end{align*}
\begin{align*}
\i_{5}= & \, \big(J_{n}(R_{\ve}\O^{n}Q^{(n)}-Q^{(n)}R_{\ve}\O^{n}), -bJ_{n}(Q^{(n)})^2 +cJ_{n}(Q^{(n)}|Q^{(n)}|^{2})\big)\\
\leq &\, \big(\frac{4}{\Gamma}+\frac{1}{2C(b^2)}\big)\|R_{\ve}\O^{n}Q^{(n)}\|_{L^{2}}^{2}+\frac{\Gamma c^{2}}{8}\|J_{n}(Q^{(n)}|Q^{(n)}|^{2})\|_{L^{2}}^{2}+C(b^2 )\|Q^{(n)}\|_{L^4}^4\\
\leq &\, \frac{\ve}{2}\|\na R_{\ve}u^{n}\|_{L^{4}}^{4}+C(b^2, \Gamma, \ve)\|Q^{(n)}\|_{L^{4}}^{4}+\frac{\Gamma c^{2}}{8}\|J_{n}(Q^{(n)}|Q^{(n)}|^{2})\|_{L^{2}}^{2},
\end{align*}
\begin{align*}\label{2b}
\begin{split}
\i_{7}&=2\Gamma \big( -b\D Q^{(n)}+abQ^{(n)}+bc J_{n}(Q^{(n)}|Q^{(n)}|^{2}), J_{n}(Q^{(n)})^2 \big)\\
&\leq \frac{\Gamma}{4}\|\D Q^{(n)}\|_{L^2}^{2}
+\frac{c^2 \Gamma}{8}\|J_{n}(Q^{(n)}|Q^{(n)}|^{2})\|_{L^{2}}^{2}+C(a^2, b^2, \Gamma)\big(\|Q^{(n)}\|_{L^2}^2 +\|Q^{(n)}\|_{L^4}^4\big),
\end{split}
\end{align*}
\begin{align*}
\mathcal{I}_{11}& =-\kappa(Q^{(n)}, \na R_{\ve}u^{n})\leq \frac{\mu}{4}\|\na R_{\ve}u^{n}\|_{L^{2}}^{2}+C(\kappa^2, \mu)\|Q^{(n)}\|_{L^{2}}^{2}\\
&\leq \frac{\mu}{4}\|\na u^{n}\|_{L^{2}}^{2}+C(\kappa^2, \mu)\|Q^{(n)}\|_{L^{2}}^{2},\\
\i_{12}&=\frac{b\Gamma}{d}(\tr((Q^{(n)})^{2})\mathrm{I}_{d}, J_{n}(\D Q^{(n)}-a Q^{(n)}+b(Q^{(n)})^{2}-cQ^{(n)}|Q^{(n)}|^{2}))\\
&\leq \frac{\Gamma}{4}\|\D Q^{(n)}\|_{L^{2}}^{2} +\frac{c^{2}\Gamma}{8}\|J_{n}(Q^{(n)}|Q^{(n)}|^{2})\|_{L^{2}}^{2}+C\|Q^{(n)}\|_{L^{2}}^{2}+C\|Q^{(n)}\|_{L^{4}}^{4}.
\end{align*}
Substituting all the above estimates into (\ref{as1-sum}), we have
\be\label{energy-for-approx-1}
\begin{split}
&\frac{d}{dt}E^{n}(t)+\frac{3\mu}{4} \|\nabla u^n\|_{L^2}^2 +\frac{\Gamma}{2} \|\D Q^{(n)}\|_{L^2}^2 +\frac{c^{2}\Gamma}{2} \|J_{n}(Q^{(n)}|Q^{(n)}|^{2})\|_{L^{2}}^{2}\\
&\quad+\frac{\ve}{2} \|R_{\ve}u^{n}\cdot \na Q^{(n)}\|_{L^{3}}^{3}+\frac{\ve}{2} \|\na R_{\ve}u^{n}\|_{L^{4}}^{4}\\[1mm]
&\,\,\leq C\left(\|Q^{(n)}\|_{L^{2}}^{2}+\|Q^{(n)}\|_{L^{4}}^{4}+\|u^{n}\|_{L^{2}}^2 +\|\na Q^{(n)}\|_{L^2}^2 \right),
\end{split}
\en
where $C$ depends on $a, b, c, \kappa, \Gamma, \mu$, and $\ve$, and
\begin{equation*}
E^{n}(t)=\int_{\R^d}\left(\frac{1}{2}|\nabla Q^{(n)}|^{2}+\frac{a}{2}|Q^{(n)}|^{2}
  -\frac{b}{3}\tr ((Q^{(n)})^{3})+\frac{c}{4}|Q^{(n)}|^{4}+\frac{1}{2}|u^n |^2 \right)dx.
\end{equation*}

Again, by the same reasoning as in Proposition \ref{energy-estimate}, $E^{n}(t)$ may be negative.
We also need to estimate the $L^2$--norm of the $Q$-tensor separately in order to obtain the desired $H^1$--estimates for $Q$.
Multiplying the first equation in system (\ref{approx-system-1}) by $Q^{(n)}$,
taking the trace, integrating over $\R^d$ by parts,
multiplying the result by $2M$ ($M>0$ is sufficiently large),
and adding it to (\ref{energy-for-approx-1}), we have
\begin{align*}
&\frac{d}{dt}\left(E^{n}(t)+M\|Q^{(n)}\|_{L^2}^2\right)+\frac{\mu}{2} \|\nabla u^n\|_{L^2}^2 +\frac{\Gamma}{2} \|\D Q^{(n)}\|_{L^2}^2 \\
&\quad +\frac{c^{2}\Gamma}{2} \|J_{n}(Q^{(n)}|Q^{(n)}|^{2})\|_{L^{2}}^{2}+\frac{\ve}{2} \|R_{\ve}u^{n}\cdot \na Q^{(n)}\|_{L^{3}}^{3}+\frac{\ve}{2} \|\na R_{\ve}u^{n}\|_{L^{4}}^{4}\\
&\,\, \leq C\left(\|Q^{(n)}\|_{L^{2}}^{2}+\|Q^{(n)}\|_{L^{4}}^{4}+\|u^{n}\|_{L^{2}}^2 +\|\na Q^{(n)}\|_{L^2}^2 \right),
\end{align*}
where $C=C(a, b, c, \kappa, \la, \Gamma, \ve, \mu, M)$ is a constant, independent of $n$.

From the above estimate, along with Gronwall's inequality (Lemma \ref{Gronwall}), we can conclude the {\it a priori} bounds in \eqref{a-priori-bound},
which are independent of $n$, for any $T<\infty$.

In order to prove that $Q^{(n)}\in S^{d}_{0}$, besides the symmetry property mentioned in Remark \ref{simplicity},
it remains to show $\tr (Q^{(n)})=0$.
We take the trace on both sides of the first equation in system \eqref{approx-system-1} and
use   that $Q^{(n)}=(Q^{(n)})^{\top}$, $(\O^{n})^{\top}=-\O^{n}$, and $\tr (D^{n}) =\mathrm{div}(u^{n})=0$
to obtain the following initial value problem:
\begin{align*}
&\del_{t} \tr (Q^{(n)})+J_n \big(R_{\ve}u^n \cdot \nabla \tr (Q^{(n)})\big)=\Gamma J_{n}(\D \tr(Q^{(n)})-a\,\tr (Q^{(n)})-c\, \tr (Q^{(n)}) \tr((Q^{(n)})^{2})),\\
&\tr (Q^{(n)})|_{t=0}=J_{n}R_{\ve}\tr(\bar{Q})=0.
\end{align*}
Multiplying the above equation by $\tr(Q^{(n)})$, integrating by parts over $\R^{d}$, and using $J_{n}Q^{(n)}=Q^{(n)}$ and
the uniform bounds of $Q^{(n)}$ in \eqref{a-priori-bound}, we have
\begin{align*}
&\frac{d}{d t}\|\tr(Q^{(n)})\|_{L^{2}}^{2}+\Gamma \|\na \tr(Q^{(n)})\|_{L^{2}}^{2}=-a \Gamma \|\tr(Q^{(n)})\|_{L^{2}}^{2}-c\Gamma \int_{\R^{d}}|\tr(Q^{(n)})|^{2}|Q^{(n)}|^{2}d x\\
&\leq -a\Gamma \|\tr(Q^{(n)})\|_{L^{2}}^{2}+C\|Q^{(n)}\|_{L^{6}}^{2}\|\tr(Q^{(n)})\|_{L^{6}}\|\tr(Q^{(n)})\|_{L^{2}}\\
&\leq -a\Gamma \|\tr(Q^{(n)})\|_{L^{2}}^{2}+C\|Q^{(n)}\|_{H^{1}}^{2}\|\na \tr(Q^{(n)})\|_{L^{2}}^{\frac{d}{3}}\|\tr(Q^{(n)})\|_{L^{2}}^{2-\frac{d}{3}}\\
&\leq \frac{\Gamma}{2}\|\na \tr(Q^{(n)})\|_{L^{2}}^{2}+C\|\tr(Q^{(n)})\|_{L^{2}}^{2},
\end{align*}
thus,
$$
\frac{d}{d t}\|\tr(Q^{(n)})\|_{L^{2}}^{2}\le C\|\tr(Q^{(n)})\|_{L^{2}}^{2},
$$
where we have used the Sobolev imbedding, the Gagliardo-Nirenberg interpolation inequality in Lemma \ref{gn-inequality}£¬ and the Cauchy inequality.
Hence, we conclude that $\tr(Q^{(n)})=0$ by the initial condition.
\end{proof}

Then we can conclude from the uniform estimates in \eqref{a-priori-bound} that $T_{n}=\infty$.
In addition, by using system (\ref{approx-system-1}) and the above estimates,
we can compute the bounds for $\del_{t}(Q^{(n)}, u^{n})$
in some $L^{1}(0, T; H^{-N})$ for large enough $N$.
Then, by the classical Aubin-Lions compactness lemma (Lemma \ref{al-lemma}),
we conclude that, subject to a subsequence,
\be\label{convergence-result}
\begin{split}
&Q^{(n)}\rightharpoonup Q \quad \mbox{in}\; L^{2}(0, T; H^{2}),
 \qquad Q^{(n)}\rightarrow Q \quad \mbox{in} \; L^{2}(0, T; H_{\rm loc}^{2-\delta}) \,\,\,\mbox{for any} \, \delta \in(0, 2+N),\\
&Q^{(n)}(t)\rightharpoonup Q(t) \qquad \mbox{in} \; H^{1} \,\,\, \mbox{for any} \,  t\in \R_{+},\\
&Q^{(n)}\rightharpoonup Q\; \mbox{in}\; L^{p}(0, T; H^1),
 \; Q^{(n)}\rightarrow Q \; \mbox{in}\; L^{p}(0, T; H^{1-\delta}_{\rm loc})\;\; \mbox{for any}\, \delta \in (0, 1+N),\; p\in [2, \infty],\\
&u^{n}\rightharpoonup u \quad \mbox{in}\; L^{2}(0, T; H^{1}),
  \qquad u^{n}\rightarrow u \quad \mbox{in}\; L^{2}(0, T; H_{\rm loc}^{1-\delta}) \,\,\, \mbox{for any} \,\delta \in (0, 1+N),\\
&u^{n}(t)\rightharpoonup u(t)\quad \mbox{in} \; L^{2}\quad \mbox{for any \,$t\in \R_{+}$}.
\end{split}
\en
As a result, we can pass to the limit as $n$ goes to infinity to obtain a weak solution $(Q_{\ve}, u_{\ve})$
 of  the following modified system (for simplicity, we denote $(Q_{\ve}, u_{\ve})$ by $(Q, u)$ when no confusion arises):
\be \label{approx-system*}
\begin{cases}
\partial_{t}Q + R_{\ve} u \cdot \na Q+QR_{\ve}\O-R_{\ve}\O Q-\lambda |Q| R_{\ve}D=\Gamma H,\\
\partial_{t}u +\pa (R_{\ve} u\cdot \na u)-\mu \D u\\
  \quad = -\ve \pa R_{\ve}(\del_{\a}Q_{\b \gamma}(R_{\ve}u\cdot \na Q_{\b \gamma})|R_{\ve}u\cdot \na Q|)
   +\ve \pa \na \cdot R_{\ve}(\na R_{\ve}u|\na R_{\ve}u|^{2})\\
  \quad\,\,\,\,\, \, -\pa \nabla \cdot R_{\ve}(\na Q\odot \na Q)
   -\lambda \na \cdot \pa R_{\ve} (|Q|H)+\pa \na \cdot R_{\ve} (Q\D Q-\D QQ)+\kappa \pa \na \cdot R_{\ve} Q,\\
(Q, u)|_{t=0}=(R_{\ve}\bar{Q}, R_{\ve}\bar{u}),
\end{cases}
\en
such that
\be
Q_{\ve}\in L_{\rm loc}^{\infty}(\R_{+}; H^{1})\cap L_{\rm loc}^{2}(\R_{+}; H^{2}), \quad u_{\ve}\in L_{\rm loc}^{\infty}(\R_{+}; L^{2})\cap L_{\rm loc}^{2}(\R_{+}; H^{1}).
\en

\begin{Remark}\label{remark-1}
This modified system is obtained by mollifying the coefficients of the $Q$-tensor equation
and the forcing terms of the velocity equation, and by adding the extra terms
given by $-\ve \del_{\a}Q_{\b \gamma}(u\cdot \na Q_{\b \gamma})|u\cdot \na Q|$
and $\ve \na \cdot (\na u|\na u|^{2})$ to the velocity equation.
These two terms are needed  to estimate some {\it bad} terms which do not disappear
in the energy estimates.
Moreover, we would like to mention that the above procedure for obtaining the solution to system (\ref{approx-system*})
follows from the classical Friedrichs' scheme. We also  point out that the solutions to (\ref{approx-system*})
are smooth, because we can bootstrap the regularity improvement provided by the linear heat equation
to obtain the smooth regularity of $Q$, and bootstrap the regularity improvement
provided by the linear advection equation to obtain the smooth regularity of $u$.
\end{Remark}

\subsection{Compactness and Convergence as $\ve \to 0$ for System (\ref{approx-system*})}\label{ve-goes-0}
In this subsection, we show that, by passing to the   $\ve\to 0$ limit in system (\ref{approx-system*}),
we can obtain a weak solution of the  system (\ref{Qtensor}).
In order to do so, we need to achieve some uniform bounds for solution $(Q_{\ve}, u_{\ve})$.
Although, as a limit of $(Q^{(n)}_{\ve}, u^{n}_{\ve})$, $(Q_{\ve}, u_{\ve})$ still satisfies
the {\it a priori} bounds  in (\ref{a-priori-bound}),
we cannot apply these bounds in this step because they are not uniform with respect to $\ve$.
Therefore, we need to find  new {\it a priori} bounds for the system (\ref{approx-system*}).
For simplicity of notation, we denote $(Q_{\ve}, u_{\ve})$ by $(Q, u)$ when no confusion arises below.

Applying the same procedure as in \S \ref{a-priori-estimates}, that is, multiplying the first equation
in (\ref{approx-system*}) by $-H+2MQ$ (with $M$ a sufficiently large positive constant),
taking the trace, integrating by parts over $\R^d$, and adding this   to the second equation
in (\ref{approx-system*}) multiplied by $u$
and integrated by parts over $\R^d$, with the analogous cancellations
as  in \S \ref{a-priori-estimates}, we have
\be
\begin{split}
& \frac{d}{dt}\int_{\R^d }\left(\frac{1}{2}|\nabla Q|^2 +(\frac{a}{2}+M)|Q|^2 -\frac{b}{3}\tr (Q^3)
 +\frac{c}{4}|Q|^{4}+ \frac{1}{2}|u|^{2} \right)dx \\
&\quad +\frac{\mu}{2}\|\na u\|_{L^2}^{2} +\frac{\Gamma}{2} \|\D Q\|_{L^2}^{2}
 +\frac{c^2 \Gamma}{2} \|Q\|_{L^6}^{6}+\ve \|R_{\ve}u\cdot \na Q\|_{L^{3}}^{3}
  +\ve \|\na R_{\ve}u\|_{L^{4}}^{4}\\
&\,\,\,\leq\,  C(\|Q\|_{L^2}^{2} +\|\na Q\|_{L^2}^2 +\|Q\|_{L^4}^4 ),
\end{split}
\en
where $C=C(a, b, c, \kappa, \mu, \la, \Gamma, M)$ is independent of $\ve$.
Then, by Gronwall's inequality (Lemma \ref{Gronwall}), we have the following {\it a priori} bounds, independent of $\ve$, such that, for any $T<\infty$,
\be\label{bound-approx-system-1}
\sup_{\ve}\|Q_{\ve}\|_{L^{2}(0, T; H^{2})\cap L^{\infty}(0, T; H^{1}\cap L^{4})\cap L^{6}(0, T; L^6)}
+ \sup_{\ve}\|u_{\ve}\|_{L^{\infty}(0, T; L^{2})\cap L^{2}(0, T; H^{1})}\leq C.
\en
Moreover, for any $\ve>0$, we have
\be\label{bound-approx-system-2}
\ve\|R_{\ve}u\cdot \na Q\|^{3}_{L^{3}(0, T; L^{3})}
+\ve\|\na R_{\ve}u\|^{4}_{L^{4}(0, T; L^{4})}\leq C
\en
with constant $C$ independent of $\ve$.
In addition, since $(Q_{\ve}, u_{\ve})$ satisfies system (\ref{approx-system*}),
along with  (\ref{bound-approx-system-1})--(\ref{bound-approx-system-2}),
we can obtain  bounds for $\del_{t}(Q_{\ve}, u_{\ve})$ in $L^{1}(0, T; H^{-2})$.
In order to do this, we need to estimate the $H^{-2}$--norm of each of the other terms in
system (\ref{approx-system*}), aside from $(\del_{t}Q_\ve , \del_{t}u_{\ve})$.
Since the estimates are similar to each other, we just show some tricky ones in the following:
\begin{align*}
&\|\lambda |Q|R_{\ve}D\|_{H^{-2}}=\sup_{\varphi\in S^{d}_{0}, \|\varphi\|_{H^{2}_{0}}\leq 1}(\lambda |Q|R_{\ve}D, \varphi)
  \leq C\|\na u\|_{L^2}\|Q\|_{L^2}\|\varphi\|_{L^\infty} \\
&\quad \leq C\|\na u\|_{L^{2}}\|Q\|_{L^{2}}\|\varphi\|_{H^{2}_{0}}\leq  C\|\na u\|_{L^2}\| Q\|_{L^2},
\end{align*}
\begin{align*}
&\|\ve \pa \na \cdot R_{\ve}(\na R_{\ve}u|\na R_{\ve}u|^{2})\|_{H^{-2}}
  =\sup_{ \|\psi\|_{H^{2}_{0}}\leq 1, \rm{div}(\psi)=0}(\ve \pa \na \cdot R_{\ve}(\na R_{\ve}u|\na R_{\ve}u|^{2}), \psi)\\
&\quad =\sup_{ \|\psi\|_{H^{2}_{0}}\leq 1, \rm{div}(\psi)=0}(\ve \na R_{\ve}u|\na R_{\ve}u|^{2}, R_{\ve}\na \psi)
\leq C\ve \|\na R_{\ve}u\|_{L^2}\|\na R_{\ve} u\|^{2}_{L^4}\\
&\quad \leq C\ve \|\na u\|_{L^2}\|\na R_{\ve}u\|_{L^4}^2,
\end{align*}
\begin{align*}
&\|\lambda \na \cdot \pa R_{\ve} (|Q|H)\|_{H^{-2}}
=\sup_{ \|\psi\|_{H^{2}_{0}}\leq 1, \rm{div}(\psi)=0}(\lambda \na \cdot \pa R_{\ve} (|Q|H), \psi)\\
&\quad =\sup_{ \|\psi\|_{H^{2}_{0}}\leq 1, \rm{div}(\psi)=0}(\lambda |Q|(\D Q-a Q+b(Q^2-\frac{\tr (Q^2)}{d}\mathrm{I}_d)-cQ\,\tr (Q^2)), R_{\ve}\na \psi)\\
&\quad  \leq  C\|Q\|_{L^2}\big(\|\D Q\|_{L^2}+\|Q\|_{L^2}+\|Q\|_{L^4}^{2}+\|Q\|^{3}_{L^6}\big),
\end{align*}
and
$$
\|\kappa \pa \na \cdot R_{\ve} Q\|_{H^{-2}}=\sup_{ \|\psi\|_{H^{2}_{0}}\leq 1, \rm{div}(\psi)=0}(\kappa \pa \na \cdot R_{\ve} Q, \psi)\leq C\|Q\|_{L^2}.
$$
By the Sobolev interpolation inequality (Lemma \ref{gn-inequality}), one has
\begin{equation}\label{na-Q-L3-embedding}
\|\na Q\|_{L^3}\leq C \|{D^2} Q\|_{L^{2}}^{\frac{1}{2}}\|Q\|^{\frac{1}{2}}_{L^6}.
\end{equation}
From the above inequality, we have
\begin{align*}
&\|\ve \pa R_{\ve}(\del_{\a}Q_{\b \gamma}(R_{\ve}u\cdot \na Q_{\b \gamma})|R_{\ve}u\cdot \na Q|)\|_{H^{-2}}\\
&=\sup_{ \|\psi\|_{H^{2}_{0}}\leq 1, \rm{div}(\psi)=0}(\ve \pa R_{\ve}(\del_{\a}Q_{\b \gamma}(R_{\ve}u\cdot \na Q_{\b \gamma})|R_{\ve}u\cdot \na Q|), \psi)\\
&=\sup_{ \|\psi\|_{H^{2}_{0}}\leq 1, \rm{div}(\psi)=0}(\ve \del_{\a}Q_{\b \gamma}(R_{\ve}u\cdot \na Q_{\b \gamma})|R_{\ve}u\cdot \na Q|, R_{\ve} \psi)\\
&\leq C\ve \|\na Q\|_{L^3}\|R_{\ve}u\cdot \na Q\|^{2}_{L^3}\|\psi\|_{L^\infty}\\
&\leq C\ve \|{D^2} Q\|^{\frac{1}{2}}_{L^2}\| Q\|^{\frac{1}{2}}_{L^6}\|R_{\ve}u\cdot \na Q\|_{L^3}^2 \|\psi\|_{L^\infty}\\
&\leq C\ve \big(\|R_{\ve}u\cdot \na Q\|_{L^3}^3 + \|{D^2} Q\|^{2}_{L^2}+\| Q\|^{6}_{L^6}\big).
\end{align*}

From all the above estimates, together with the uniform bounds (\ref{bound-approx-system-1})--(\ref{bound-approx-system-2}),
we can conclude that $(\del_{t}u_{\ve}, \del_{t}Q_{\ve})\in L^{1}(0, T; H^{-2})$.
Then, by the classical Aubin-Lions compactness lemma (Lemma \ref{al-lemma}),
we know that there exist $Q\in L_{\rm loc}^{\infty}(\R_{+}; H^{1})\cap L_{\rm loc}^{2}(\R_{+}; H^{2})$
and  $u\in L_{\rm loc}^{\infty}(\R_{+}; L^{2})\cap L_{\rm loc}^{2}(\R_{+}; H^{1})$ such that,
subject to a subsequence, we have
\be \label{sub-conv-2}
\begin{split}
&Q_{\ve}\rightharpoonup Q \quad \mbox{in}\; L^{2}(0, T; H^{2}),\quad Q_{\ve}\rightarrow Q \quad \mbox{in} \; L^{2}(0, T; H_{\rm loc}^{2-\delta})
  \quad \mbox{for any}\; \delta \in(0, 4),\\
&Q_{\ve}(t)\rightharpoonup Q(t) \quad \mbox{in}\; H^{1} \quad \mbox{for any}\; t\in \R_{+},\\
&Q_{\ve}\rightharpoonup Q \quad \mbox{in}\; L^{p}(0, T; H^{1}),\quad Q_{\ve}\rightarrow Q \quad \mbox{in} \; L^{p}(0, T; H_{\rm loc}^{1-\delta})
  \quad \mbox{for any}\; \delta \in(0, 3),  p\in [2, \infty) ,\\
&u_{\ve}\rightharpoonup u \quad \mbox{in}\; L^{2}(0, T; H^{1})\quad \mbox{and}
  \quad u_{\ve}\rightarrow u \quad \mbox{in}\; L^{2}(0, T; H_{\rm loc}^{1-\delta}) \quad \mbox{for any}\; \delta \in (0, 3),\\
&u_{\ve}(t)\rightharpoonup u(t)\quad \mbox{in} \; L^{2} \quad \mbox{for any}\; t\in \R_{+}.
\end{split}
\en

With the above result, we can pass to the limit in the weak solution $(Q_{\ve}, u_{\ve})$ of system (\ref{approx-system*}),
as $\ve \to 0$, to obtain a weak solution $(Q, u)$ of system (\ref{Qtensor}) satisfying (\ref{weak-solution-Q})--(\ref{weak-solution-u}).
In the following, we focus on some terms that are not so easy to deal with.

First, we observe that
\begin{equation}\label{Q-ve-1order}
\begin{split}
&\del_{\b}(Q_{\ve})_{\gamma \delta}R_{\ve}(\del_{\b}\psi_{\a})- \del_{\b}Q_{\gamma \delta}\del_{\b}\psi_{\a}\\
&=\del_{\b}(Q_{\ve})_{\gamma \delta}\big(R_{\ve}(\del_{\b}\psi_{\a})-\del_{\b}\psi_{\a}\big)
  +\big(\del_{\b}(Q_{\ve})_{\gamma \delta}- \del_{\b}Q_{\gamma \delta}\big)\del_{\b}\psi_{\a}\\
&=\i_{\ve}^1 +\i_{\ve}^2
\end{split}
\end{equation}
converges to zero strongly in $L^{2}(0, T; L^2)$. This is owing to
the fact that $\i_{\ve}^1$ converges to zero strongly in $L^2 (0, T; L^2)$,
since $R_{\ve}(\del_{\b}\psi_{\a})-\del_{\b}\psi_{\a}$ converges strongly to zero in any $L^{p}(0, T; L^{q})$ ($\psi$ is compactly
supported and smooth) and $Q_{\ve}$ is uniformly bounded in $L^{\infty}(0, T; H^1)$,
and the fact that $\i_{\ve}^2$ converges to zero strongly in $L^2 (0, T; L^2)$,
since $Q_{\ve}$ converges to $Q$ strongly in $L^2 (0, T; H^{2-\delta}_{\rm loc})$
and $\psi$ is compactly supported and smooth.\\%
Combining the above facts with the weak convergence of $Q_{\ve}$ in $L^{2}(0, T; H^2)$ in (\ref{sub-conv-2}) yields
\begin{align*}
\int_{0}^{\infty}\int_{\R^{d}}R_{\ve}\big(\del_{\a}(Q_{\ve})_{\gamma \dl}\del_{\b}(Q_{\ve})_{\gamma \dl}\big)\del_{\b}\psi_{\a}dxdt
&=\int_{0}^{\infty}\int_{\R^{d}}\del_{\a}(Q_{\ve})_{\gamma \dl}\del_{\b}(Q_{\ve})_{\gamma \dl}R_{\ve}\del_{\b}\psi_{\a}dxdt\\
&\rightarrow \int_{0}^{\infty}\int_{\R^{d}}\del_{\a}Q_{\gamma \dl}\del_{\b}Q_{\gamma \dl}\del_{\b}\psi_{\a}dxdt.\\
\end{align*}

Moreover, from the strong convergence of $Q_{\ve}$ in $L^{p}(0, T; H^{1-\delta}_{\rm loc})$ for $p\in [1, \infty)$ in (\ref{sub-conv-2})
and the uniform bound of $Q_{\ve}$ in $L^{6}(0, T; L^6)$ in (\ref{bound-approx-system-1}), we have
\begin{equation*}
|Q_{\ve}|^{3}Q_{\ve}\rightarrow |Q|^{3}Q, \quad
|Q_{\ve}|Q_{\ve}^{2}\rightarrow |Q|Q^{2} \qquad\,\, \mbox{in} \,\,\, L^{1}(0, T; L^{1}_{\rm loc}).
\end{equation*}
As a result, we have the following convergence:
\begin{align}
&\la \int_{0}^{\infty}\int_{\R^{d}}R_{\ve}(|Q_{\ve}|H^{\ve}): \na \psi\, dx dt
=\la \int_{0}^{\infty}\int_{\R^{d}}|Q_{\ve}|H^{\ve}: R_{\ve}\na \psi\, dx dt\nonumber\\
&=\la \int_{0}^{\infty}\int_{\R^{d}}|Q_{\ve}|\Big(\D Q_{\ve}-aQ_{\ve}+b\big(Q_{\ve}^{2}+\frac{\tr((Q_{\ve})^{2})}{d}\mathrm{I}_d\big)
-cQ_{\ve}\tr((Q_{\ve})^2)\Big): R_{\ve}\na \psi\, dx dt\\
&\rightarrow \la \int_{0}^{\infty}\int_{\R^{d}}|Q|H : \na \psi \, dx dt.\nonumber
\end{align}
Finally, from the uniform bounds of $\ve \|R_{\ve}u\cdot \na Q\|^{3}_{L^{3}}$ in (\ref{bound-approx-system-2}), along with (\ref{na-Q-L3-embedding}),
we have
\begin{equation}
\begin{split}
&\ve\int_{0}^{\infty}\int_{\R^d} R_{\ve}\big(|R_{\ve}u\cdot \na Q|(R_{\ve}u\cdot \na Q_{\b \gamma})\del_{\a}Q_{\b \gamma}\big) \psi_{\a}\, dx dt\\
&=\ve\int_{0}^{T}\int_{\R^d} \big(|R_{\ve}u\cdot \na Q|(R_{\ve}u\cdot \na Q_{\b \gamma})\del_{\a}Q_{\b \gamma}\big) R_{\ve}\psi_{\a}\, dx dt\\
&\leq \ve \int_{0}^{T}\|R_{\ve}u\cdot \na Q\|_{L^{3}}^{2}\|\na Q\|_{L^3}\|R_{\ve}\psi\|_{L^\infty}dt\\
&\leq C\ve \int_{0}^{T}\|R_{\ve}u\cdot \na Q\|_{L^{3}}^{2}\|{D^2} Q\|_{L^2}^{\frac{1}{2}}\| Q\|_{L^{6}}^{\frac{1}{2}}dt\\
&\leq C\ve  \|R_{\ve}u\cdot \na Q\|_{L^{3}(0, T; L^3)}^{2}\|Q\|_{L^{6}(0, T; L^6)}^{\frac{1}{2}}\|Q\|_{L^{2}(0, T; H^{2})}^{\frac{1}{2}}\\
&\rightarrow 0.
\end{split}
\end{equation}
Similarly to the above estimate, by using the uniform bound of $\ve \|\na R_{\ve}u\|^{4}_{L^{4}}$ in (\ref{bound-approx-system-2}),
we also conclude that $\ve \int R_{\ve}\big(\na R_{\ve}u|\na R_{\ve}u|^2\big): \na \psi \, dx dt\to 0$ as $\ve \to 0$.

Then the proof of Theorem \ref{weak-solution-thm} is completed.

\medskip

We remark that, in the proof above, because of the active terms in our system (\ref{Qtensor}),
in order to obtain the convergence of the approximate solutions,
we need to establish a higher integrability of $Q$
in time and use the Sobolev interpolation inequalities to achieve the uniform $H^{-2}$--estimates
for the extra terms added to the approximate system, which is different from the passive case
treated in \cite{P-Z-2011}.

\bigskip

\section{Higher Regularity In Two Dimensions}\label{2D-regularity}

In this section, we prove that, in the two-dimensional case,
system (\ref{Qtensor}) has solutions with higher regularity,
provided with sufficiently regular initial data.
The result is stated in Theorem \ref{regularity-thm-2d}.
We   mention that   we use the Littlewood-Paley decomposition to help us improve
the higher regularity of the solution.
Of course, we can also obtain the higher regularity by differentiating the equations $k\geq 1$ times
in system (\ref{Qtensor}).
However, this requires the initial data $(\bar{Q}, \bar{u})$ to be at least
in  $H^2 \times H^1$,
rather than in $H^{s+1} \times H^s$ for $s>0$ in the Littlewood-Paley method.

\begin{Remark}
For any traceless, symmetric, $2\times 2$ matrix $Q$,
\begin{equation*}
Q^2 -\frac{\tr (Q^2)}{2}\mathrm{I}_{2} =0,
\end{equation*}
which means that $H$ reduces to a simpler form
\begin{equation*}
H=\D Q-a Q-c\,Q\,\tr (Q^2) \qquad \mbox{for $x\in \R^2$}.
\end{equation*}
\end{Remark}

\begin{Theorem}\label{regularity-thm-2d}
Let $s>0$ and $(\bar{Q}, \bar{u})\in H^{s+1}(\R^2)\times H^{s} (\R^2)$.
There exists a global solution $(Q(t, x), u(t, x))$ to system \eqref{Qtensor}
with the initial conditions:
\be
Q(0, x)=\bar{Q}(x), \qquad u(0, x)=\bar{u}(x)
\en
such that
$$
Q\in L_{\rm loc}^2 (\R_+ ; H^{s+2}(\R^2))\cap L_{\rm loc}^{\infty} (\R_+ ; H^{s+1}(\R^2)),
$$
$$
u \in L_{\rm loc}^2 (\R_+ ; H^{s+1}(\R^2))\cap L_{\rm loc}^{\infty} (\R_+ ; H^{s}(\R^2)),
$$
and
\be\label{higher-regularity}
\|\na Q(t, \cdot)\|_{H^{s}}^{2}+\|u(t, \cdot)\|_{H^{s}}^{2}\leq Ce^{e^{Ct}},
\en
where constant $C$ depends on $\bar{Q}, \bar{u}, a, b, c$, and $\Gamma$.
\end{Theorem}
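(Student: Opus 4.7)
The plan is to propagate higher Sobolev regularity by a Littlewood--Paley block analysis, using the $H^1 \times L^2$ a priori bounds from Proposition \ref{energy-estimate} to control the lower-order factors in the nonlinear estimates. Let $\Delta_q$ denote the dyadic frequency localization operator and set $f_q := \Delta_q f$. I would apply $\Delta_q$ to both equations in \eqref{Qtensor}, then take the $L^2$ inner product of the $Q^{}_q$-equation with $-\Delta Q^{}_q + Q^{}_q$ (so as to build the $H^{s+1}$-norm of $Q$ after multiplication by $2^{2qs}$ and summation over $q$) and the $u^{}_q$-equation with $u^{}_q$, exploiting the same cancellation pattern exhibited in Proposition \ref{energy-inequality} on each dyadic block.

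On the block level, most nonlinear contributions are routine: the commutators from the transport terms $u\cd\na Q$ and $u\cd\na u$ are handled by the standard estimate $\|[\Delta_q,u\cd\na]f\|_{L^2}\lesssim c_q 2^{-qs}(\|\na u\|_{L^\infty}\|f\|_{H^s}+\|\na f\|_{L^\infty}\|u\|_{H^s})$; the quadratic and cubic $Q$-terms in $\bar H$, as well as the stress contributions $\na Q\odot \na Q$, $Q\D Q-\D Q\,Q$, and $\kappa Q$, are estimated by product laws in Sobolev spaces together with the 2D embedding $H^1\hookrightarrow L^p$ for every $p<\infty$, invoking the $L^2_t H^2_x$ control of $Q$ coming from Proposition \ref{energy-estimate}. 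The skew-symmetry cancellations $\i_1+\i_6=0$, $\i_3+\i_8=0$, $\i_4=0$ pass through to the frequency-localized pairings because $\Delta_q$ preserves the symmetric/antisymmetric structure of $Q$ and $\Omega$.

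The central difficulty, and the step I expect to consume Appendix B, is the pair $-\la|Q|D$ and $-\la\na\cd(|Q|H)$: their $L^2$ cancellation in the energy identity relies on the explicit algebraic identity $(|Q|H,\na u-D)=(|Q|H,\Omega)=0$, which fails after $\Delta_q$ is inserted because $|Q|$ does not commute with $\Delta_q$. I would use Bony's paraproduct decomposition $|Q|H=T_{|Q|}H+T_H |Q|+R(|Q|,H)$, splitting the troublesome contribution into (i) the piece in which $\Delta_q$ lands on $H$ or $D$, for which the original cancellation still applies after a high-frequency pairing, and (ii) paraproduct/remainder terms in which the derivative lands on the low-frequency factor $|Q|$; these are controlled by $\|\na|Q|\|_{L^2}\le\|\na Q\|_{L^2}$ together with the $H^{s+1}$-norm being bootstrapped. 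Reducing the problem to a two-term form that matches the previous cancellation template is precisely the role of inequality \eqref{high-freq-Q-u}.

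Summing the resulting block inequalities against $2^{2qs}$ yields
\be
\f{d}{dt}\bigl(\|\na Q\|_{H^s}^2+\|u\|_{H^s}^2\bigr)+\mu\|\na u\|_{H^s}^2+\Gamma\|\D Q\|_{H^s}^2\le C\,\Phi(t)\bigl(1+\|\na Q\|_{H^s}^2+\|u\|_{H^s}^2\bigr),
\en
where $\Phi(t)$ collects factors already bounded in $L^1_t$ or $L^2_t$ by Proposition \ref{energy-estimate}. In two dimensions, the $L^\infty$-norms of $\na u$ and $\na Q$ that appear in $\Phi$ must be bounded logarithmically via the Brezis--Gallouet--type inequality $\|f\|_{L^\infty}\le C(1+\|f\|_{H^1}\log^{1/2}(e+\|f\|_{H^{s+1}}))$; this converts the above into $y'\le Cy\log(e+y)$ with $y(t):=e+\|\na Q\|_{H^s}^2+\|u\|_{H^s}^2$, whose integration produces the double-exponential bound \eqref{higher-regularity}. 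Global existence then follows by performing these estimates on a regularized system (as in \S \ref{weak-solutions}) and passing to the limit, using that the uniform bound precludes any finite-time blowup. The hardest single ingredient will be the paraproduct manipulation of Step 3 --- ensuring the $\la$-cancellation survives after frequency localization --- while the double-exponential growth rate is a mechanical consequence of the 2D logarithmic interpolation.
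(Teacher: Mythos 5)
Your overall architecture matches the paper's: dyadic blocks $\D_q$ plus a separate $S_0$ low-frequency estimate, Bony decomposition, and the recognition that the $\la|Q|D$ / $\la\na\cd(|Q|H)$ pair is the crux because the cancellation $(|Q|H,\O)=0$ does not commute with $\D_q$. Your proposed fix for that pair --- splitting $|Q|H$ and $|Q|D$ by paraproducts so that the cancellation survives on the pieces where $\D_q$ lands on the highest-order factor, leaving only remainder terms where the derivative falls on $|Q|$ --- is exactly what Appendix B does ($\i_{8,i}+\j_{9,i}=0$ for $i=1,2,3$, with $\i_{8,4},\j_{9,4}$ estimated directly).

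However, the way you propose to close the differential inequality contains a genuine gap. Your commutator bound $\|[\D_q,u\cd\na]f\|_{L^2}\lesssim c_q 2^{-qs}\|\na u\|_{L^\infty}\|f\|_{H^s}+\dots$ and the subsequent Brezis--Gallouet step require $\|\na u\|_{L^\infty}$ and $\|\na Q\|_{L^\infty}$, but for $0<s<1$ the regularity being propagated ($u\in H^{s+1}$, $\na Q\in H^{s}$) does not control these norms, and the logarithmic interpolation $\|f\|_{L^\infty}\lesssim \|f\|_{H^1}\log^{1/2}(e+\|f\|_{H^{s+1}})$ is not applicable to $\na u$ or $\na Q$ unless $s\geq 1$. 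The whole point of the Littlewood--Paley route, as the paper emphasizes at the start of \S 4, is to work with data only in $H^{s+1}\times H^s$ for arbitrary $s>0$. The paper avoids $L^\infty$ norms entirely: it uses the commutator estimate in $L^{4}$-based form together with the 2D interpolation $\|f\|_{L^4}\lesssim\|f\|_{L^2}^{1/2}\|\na f\|_{L^2}^{1/2}$, producing terms such as $\|u\|_{L^2}^{1/2}\|\na u\|_{L^2}^{1/2}\|\na Q\|_{H^s}^{1/2}\|\D Q\|_{H^s}^{3/2}$ that are absorbed by Young's inequality into the dissipation $\mu\|\na u\|_{H^s}^2+\Gamma\|\D Q\|_{H^s}^2$, yielding the \emph{linear} inequality $\va'\le A(t)\va+B(t)$. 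Consequently the double exponential does not come from an Osgood/log mechanism at all: it comes from the fact that $A$ and $B$ are controlled by the basic $H^1\times L^2$ energy, which itself grows exponentially because the active term $\kappa Q$ destroys the Lyapunov structure (Proposition \ref{energy-inequality}), so $\int_0^tA\sim e^{Ct}$ and Gronwall gives $e^{e^{Ct}}$. Note also that if your $y'\le C\Phi(t)\,y\log(e+y)$ were run with $\int_0^t\Phi\sim e^{Ct}$, integration would give a \emph{triple} exponential, not the bound \eqref{higher-regularity}. You should therefore replace the $L^\infty$/Brezis--Gallouet step by the $L^4$-interpolation-and-absorption scheme.
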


To prove this theorem, we restrict ourselves to system (\ref{Qtensor}) in two spatial dimensions
and give the {\it a priori} estimates for the smooth solutions of  this system.
Of course, the same estimates independent of $\ve$ can also be obtained,
if we use the modified system (\ref{approx-system*}),
whose solutions are smooth as we mentioned in Remark \ref{remark-1}.
Moreover, we use the Littlewood-Paley decomposition to obtain the {\it a priori} estimates of the solution.
Firstly, we apply $\D_q$ (see Appendix A for the notations), with $q\in \mathbb{N}$,
to the equations in system (\ref{Qtensor})
to obtain the estimates of high frequencies of the solution.
Secondly, by applying $S_{0}$ to this system, we obtain the estimates of low frequencies.
Finally, we achieve the high regularity of the solution
by combining the high and low frequencies together and by using Gronwall's inequalities (Lemma \ref{Gronwall}).

\begin{proof} We begin with the estimates of high frequencies.
Applying $\D_{q}$ to the first equation in (\ref{Qtensor}),
using Bony's paraproduct decomposition, multiplying the equation by $-\D \D_q Q$,
taking the trace, and integrating by parts over $\R^2$, we have
\begin{align} \label{high-frequency-Q}
&\frac{1}{2}\frac{d}{dt}\|\nabla \D_q Q\|_{L^2}^2 +\Gamma \|\D \D_q Q\|_{L^2}^2
+(\D_{q} \Omega S_{q-1}Q -S_{q-1}Q\D_q \Omega, \D \D_q Q)\nonumber\\
&=(\D_q (u\nabla Q), \D \D_q Q)-\sum_{|q^{'} -q|\leq 5}([\D_q ; S_{q^{'} -1}Q_{\gamma \beta}]\D_{q^{'}} \Omega_{\alpha \gamma}, \D \D_q Q_{\alpha \beta})\nonumber\\
&\quad -\sum_{|q^{'} -q|\leq 5}((S_{q^{'}-1}Q_{\gamma \b}-S_{q-1}Q_{\gamma \b})\D_q \D_{q^{'}} \Omega_{\alpha \gamma}, \D \D_q Q_{\alpha \beta})\nonumber\\
&\quad -\sum_{q^{'} >q-5}(\D_q (S_{q^{'} +2}\Omega_{\a \gamma}\D_{q^{'}} Q_{\gamma \b}), \D \D_q Q_{\alpha \beta})\\
&\quad +\sum_{|q^{'} -q|\leq 5}([\D_q ; S_{q^{'} -1}Q_{\a \gamma}]\D_{q^{'}} \Omega_{\gamma \b}, \D \D_q Q_{\alpha \beta})\nonumber\\
&\quad +\sum_{|q^{'} -q|\leq 5}((S_{q^{'}-1}Q_{\a \gamma}-S_{q-1}Q_{\a \gamma})\D_q \D_{q^{'}} \Omega_{\gamma \b}, \D \D_q Q_{\alpha \beta})\nonumber\\
&\quad +\sum_{q^{'} >q-5}(\D_q (S_{q^{'} +2}\Omega_{\gamma \b}\D_{q^{'}} Q_{\a \gamma}), \D \D_q Q_{\alpha \beta})\nonumber\\
&\quad-\la (\D_q (|Q|D), \D \D_q Q)+\Gamma a (\D_q Q, \D \D_q Q)+\Gamma c(\D_q (Q\tr (Q^2)), \D \D_q Q)\nonumber\\
&=\sum_{i=1}^{10} \i_{i}.\nonumber
\end{align}
Similarly, we can apply $\D_q$ to the second equation in (\ref{Qtensor}), use Bony's paraproduct decomposition again,
multiply the  equation by $\D_q u$, and integrate by parts over $\R^2$ to obtain
\begin{align} \label{high-frequency-u}
&\!\!\!\! \frac{1}{2}\frac{d}{dt}\|\D_q u\|_{L^2}^2+\mu \|\na \D_q u\|_{L^2}^2 +(S_{q-1}Q\D_q \D Q-\D_{q}\D Q S_{q-1}Q,\D_q \na u)\nonumber \\
=&-(\D_q (u\cd \na u), \D_q u)+(\D_q (\del_{\a}Q_{\gamma \dl}\del_{\b}Q_{\gamma \dl}), \D_q \del_{\b}u_{\a})\nonumber\\
&-\sum_{|q^{'} -q|\leq 5}([\D_q ; S_{q^{'} -1}Q_{\a \gamma}]\D_{q^{'}} \D Q_{\gamma \b}, \D_q \del_{\b}u_{\a})\nonumber\\
&-\sum_{|q^{'} -q|\leq 5}((S_{q^{'}-1}Q_{\a \gamma}-S_{q-1}Q_{\a \gamma})\D_q \D_{q^{'}} \D Q_{\gamma \b}, \D_q \del_{\b}u_{\a})\nonumber\\
&-\sum_{q^{'} >q-5}(\D_q (S_{q^{'} +2}\D Q_{\gamma \b}\D_{q^{'}} Q_{\a \gamma}), \D_q \del_{\b}u_{\a})\\
&+\sum_{|q^{'} -q|\leq 5}([\D_q ; S_{q^{'} -1}Q_{\gamma \b}]\D_{q^{'}} \D Q_{\a \gamma}, \D_q \del_{\b}u_{\a})\nonumber\\
&+\sum_{|q^{'} -q|\leq 5}((S_{q^{'}-1}Q_{\gamma \b}-S_{q-1}Q_{\gamma \b})\D_q \D_{q^{'}} \D Q_{\a \gamma}, \D_q \del_{\b}u_{\a})\nonumber\\
&+\sum_{q^{'} >q-5}(\D_q (S_{q^{'} +2}\D Q_{\a \gamma}\D_{q^{'}} Q_{\gamma \b}), \D_q \del_{\b}u_{\a})+\la (\D_q (|Q|\D Q), \na \D_q u)\nonumber\\
&-a\la (\D_q (|Q|Q), \na \D_q u)-c\la (\D_q (|Q|Q\,\tr(Q^2)), \na \D_q u)-\kappa(\D_q Q, \na \D_q u)\nonumber\\
=&\sum_{j=1}^{12}\j_{j}.\nonumber
\end{align}
Adding up (\ref{high-frequency-Q})--(\ref{high-frequency-u}) and using Lemma \ref{estimate-matrix}, we have
\be \label{high-frequency}
\begin{split}
\frac{1}{2}\frac{d}{dt}\big(\|\na \D_q Q\|_{L^2}^2 +\|\D_q u\|_{L^2}^2\big)
+\mu \|\na \D_q u\|_{L^2}^2 +\Gamma \|\D \D_q Q\|_{L^2}^2 =\sum_{i=1}^{10} \i_{i} +\sum_{j=1}^{12} \j_{j}.
\end{split}
\en
Set  $$\va (t):=\|\na Q\|_{H^{s}}^2 +\|u\|_{H^{s}}^2,\quad \va_1 (t):=\|S_0 \na Q\|_{L^2}^2 +\|S_0 u\|_{L^2}^2,\quad
 \va_2 (t):=\va (t)-\va_{1}(t),$$
  with $\va_1$ and $\va_2$ representing the low-frequency part and high-frequency part of $\va$, respectively.
Then (\ref{high-frequency}) leads to the following estimate (see Appendix B for the details):

\be \label{high-freq-Q-u}
\begin{split}
&\!\!\!\! \frac{1}{2}\frac{d}{dt}\va_{2}(t)+\sum_{q\in \mathbb{N}}2^{2qs}\big(\mu \|\D_q \na u\|_{L^2}^2 +\Gamma \|\D_q \D Q\|_{L^2}^2\big)\\
\leq & C\big(1+\|u\|_{L^{2}}^{2}\|\na u\|_{L^{2}}^{2}+\|\na Q\|_{L^{2}}^{2}\|\D Q\|_{L^{2}}^{2}
 +\|Q\|_{L^{2}}^{2}+\|Q\|_{L^{4}}^{4}+\|Q\|_{L^{6}}^{6}\big)\va (t)\\
&+\frac{\Gamma}{4}\|\D Q\|_{H^{s}}^2 +\frac{\mu }{4}\|\na u\|_{H^{s}}^2.
\end{split}
\en

Next, we estimate the low frequencies. Applying $S_0$ to the first equation in (\ref{Qtensor}), multiplying by $-S_0 \D Q$,
taking the trace, and integrating by parts over $\R^2$, we have
\begin{align}
&\frac{1}{2}\frac{d}{dt}\|\na S_0 Q\|_{L^{2}}^{2}+\Gamma \|S_0 \D Q\|_{L^{2}}^{2}\nonumber\\
&=(S_0 (u\cdot \na Q), S_0 \D Q)+(S_0 (Q\O -\O Q), S_0 \D Q)\nonumber\\
&\quad -\la (S_0 (|Q|\O), S_0 \D Q)+a\Gamma (S_0 Q, S_0 \D Q)+c\Gamma (S_0 (Q\tr(Q^2)), S_0 \D Q)\nonumber\\
&\leq C\|u\|_{L^4}\|\na Q\|_{L^4}\|\s \D Q\|_{L^2}+\|S_0 (Q\O -\O Q)\|_{L^{2}}\|\s \D Q\|_{L^2}\nonumber\\
&\quad+C\|S_{0}(|Q|\O)\|_{L^{2}}\|\s \D Q\|_{L^2}+C\|\s \na Q\|_{L^2}^2 +C\|Q\|_{L^6}^3 \|\s \D Q\|_{L^2}  \nonumber \\
&\leq C\|u\|_{L^{2}}^{\frac{1}{2}}\|\na u\|_{L^{2}}^{\frac{1}{2}}\|\na Q\|_{L^{2}}^{\frac{1}{2}}\|\D Q\|_{L^{2}}^{\frac{1}{2}}\|\s \D Q\|_{L^2}+\|S_0 (Q\O -\O Q)\|_{L^{1}}\|\s \D Q\|_{L^2}\label{low-freq-Q}\\
&\quad+C\|S_{0}(|Q|\O)\|_{L^{1}}\|\s \D Q\|_{L^2}+C\|\s \na Q\|_{L^2}^2 +C\|Q\|_{L^6}^3 \|\s \D Q\|_{L^2} \nonumber\\
&\leq C\|u\|_{L^{2}}^{\frac{1}{2}}\|\na u\|_{L^{2}}^{\frac{1}{2}}\|\na Q\|_{L^{2}}^{\frac{1}{2}}\|\D Q\|_{L^{2}}^{\frac{1}{2}}\|\s \D Q\|_{L^2}+C\|Q\|_{L^{2}}\|\na u\|_{L^{2}}\|\s \D Q\|_{L^2}\nonumber\\
&\quad+C\|\s \na Q\|_{L^2}^2 +C\|Q\|_{L^6}^3 \|\s \D Q\|_{L^2} \nonumber\\
&\leq \frac{\Gamma}{4}\|\s \D Q\|_{L^{2}}^{2} +C\|\na Q\|_{H^{s}}^2 +C\|u\|_{L^{2}}^{2}\|\na u\|_{L^{2}}^{2}+C\|\na Q\|_{L^{2}}^{2}\|\D Q\|_{L^{2}}^{2}\nonumber\\
&\quad+C\|Q\|_{L^{2}}^{2}\|\na u\|_{L^{2}}^{2}+C\|Q\|_{L^6}^{6}.\nonumber
\end{align}
Then we apply $\s$ to the second equation in (\ref{Qtensor}), multiply by $\s u$, and integrate by parts over $\R^2$ to obtain
%
\begin{align}\label{low-freq-u}
& \frac{1}{2}\frac{d}{dt}\|\s u\|_{L^2}^2 +\mu \|\s \na u\|_{L^2}^2 \nonumber \\
&=-(\s (u\cdot \na u), \s u)+(\s (\na Q \odot \na Q), \s \na u)-(\s \na \cdot (Q \D Q-\D Q Q), \s u)\nonumber\\
&\quad -\l (\s \na \cdot(|Q|(\D Q- aQ -cQ\,\tr(Q^2))), \s u)+\kappa(\s \na \cdot Q, \s u)\nonumber \\
&\leq \|\s (u\cdot \na u)\|_{L^{2}}\|\s u\|_{L^{2}}+\|\s (\na Q \odot \na Q)\|_{L^{2}} \|\s \na u\|_{L^{2}}\nonumber \\
&\quad +|\l| \|\s (|Q|(\D Q- aQ -cQ\,\tr(Q^2)))\|_{L^{2}} \|\s \na u\|_{L^{2}}\nonumber \\
&\quad +\|\s (Q \D Q-\D Q Q)\|_{L^{2}} \|\s \na u\|_{L^{2}}+|\kappa| \|\s \na \cdot Q\|_{L^{2}} \|\s u\|_{L^{2}}\nonumber \\
&\leq C\|\s (u\cdot \na u)\|_{L^{1}}\|\s u\|_{L^{2}}+C\|\s (\na Q \odot \na Q)\|_{L^{1}} \|\s \na u\|_{L^{2}} \\
&\quad +C\|\s (|Q|(\D Q- aQ -cQ\,\tr(Q^2)))\|_{L^{1}} \|\s \na u\|_{L^{2}}\nonumber \\
&\quad +C\|\s (Q \D Q-\D Q Q)\|_{L^{1}} \|\s \na u\|_{L^{2}}+C \|\s \na \cdot Q\|_{L^{2}} \|\s u\|_{L^{2}}\nonumber \\
&\leq C\|u\|_{L^{2}}\|\na u\|_{L^{2}}\|\s u\|_{L^{2}}+C\|\na Q\|_{L^{2}}^{2}\|\s \na u\|_{L^{2}}\nonumber \\
&\quad+C\big(\|Q\|_{L^{2}}\|\D Q\|_{L^{2}} +\|Q\|_{L^{2}}^{2} +\|Q\|_{L^{4}}^4\big)\|\s \na u\|_{L^{2}}+C\|\s \na Q\|_{L^2} \|\s u\|_{L^2}\nonumber \\
&\leq \frac{\mu }{4}\|\s \na u\|_{L^{2}}^{2}+C\|u\|_{H^{s}}^{2}+C\|\na Q\|_{H^{s}}^{2}+C\|u\|_{L^{2}}^{2}\|\na u\|_{L^{2}}^{2}+C\|\na Q\|_{L^{2}}^{4}\nonumber \\
&\quad+C\big(\|Q\|_{L^{2}}^{2}\|\D Q\|_{L^{2}}^{2} +\|Q\|_{L^{2}}^{4} +\|Q\|_{L^{4}}^{8}\big).
\nonumber
\end{align}
We add (\ref{low-freq-Q})--(\ref{low-freq-u}) to obtain
\be \label{low-freq-Q-u}
\begin{split}
&\frac{1}{2}\frac{d}{\mathrm{d}t}\va_{1}(t)+\frac{3\mu }{4}\|\s \na u\|_{L^2}^2 +\frac{3\Gamma}{4}\|\s \D Q\|_{L^2}^2 \\
&\leq C \va (t) +C\|u\|_{L^{2}}^{2}\|\na u\|_{L^{2}}^{2}+C\|\na Q\|_{L^{2}}^{2}\|\D Q\|_{L^{2}}^{2}+C\|Q\|_{L^{2}}^{2}\|\na u\|_{L^{2}}^{2}\\
&\quad +C\|Q\|_{L^{2}}^{2}\|\D Q\|_{L^{2}}^{2}
  +C\big(\|Q\|_{L^{2}}^{4} +\|Q\|_{L^{4}}^{8} +\|Q\|_{L^6}^{6} +\|\na Q\|_{L^{2}}^{4}\big).
\end{split}
\en

Finally, we derive the estimates of the high norms.
Adding (\ref{high-freq-Q-u}) and (\ref{low-freq-Q-u}), we have
\be
\begin{split}
&\!\!\!\! \frac{d}{\mathrm{d}t}\va (t)+\frac{\mu}{2} \| \na u\|_{H^s}^2 +\frac{\Gamma}{2} \|\D Q\|_{H^s}^2\leq A(t)\va (t)+B(t),
\end{split}
\en
with
\begin{align*}
A(t)=&C\big(1+\|u\|_{L^{2}}^{2}\|\na u\|_{L^{2}}^{2}+\|\na Q\|_{L^{2}}^{2}\|\D Q\|_{L^{2}}^{2}+\|Q\|_{L^{2}}^{2}+\|Q\|_{L^{4}}^{4}+\|Q\|_{L^{6}}^{6}\big),\\
B(t)=&C\|u\|_{L^{2}}^{2}\|\na u\|_{L^{2}}^{2}+C\|\na Q\|_{L^{2}}^{2}\|\D Q\|_{L^{2}}^{2}+C\|Q\|_{L^{2}}^{2}\|\na u\|_{L^{2}}^{2}\\
 &+C\|Q\|_{L^{2}}^{2}\|\D Q\|_{L^{2}}^{2} +C\big(\|Q\|_{L^{2}}^{4} +\|Q\|_{L^{4}}^{8} +\|Q\|_{L^6}^{6} +\|\na Q\|_{L^{2}}^{4}\big).
\end{align*}
From the {\it a priori} estimates in Proposition \ref{energy-estimate},
we know that both $A(t)$ and $B(t)$ belong to $L^{1}(0, T)$ and increase exponentially in time. Then, by Gronwall's inequality (Lemma \ref{Gronwall}), we can conclude that $\va (t)$ increases double exponentially in time as stated in (\ref{higher-regularity}).
\end{proof}

\section{Weak-Strong Uniqueness in Dimension Two}\label{2D-uniqueness}

In this section, we prove that a global weak solution and a strong one must coincide,
provided that they have the same initial data $(\bar{Q}, \bar{u})\in H^{s+1}(\R^{2})\times H^{s}(\R^{2})$
for $s>0$.
The result is stated in the following theorem.

\begin{Theorem} Let $(\bar{Q}, \bar{u})\in H^{s+1}(\R^{2})\times H^{s}(\R^{2})$, with $s>0$,  be the initial data.
By Theorem {\rm \ref{weak-solution-thm}}, there exists a weak solution $(Q_1 , u_1)$ of system \eqref{Qtensor} such that
\be\label{Q1u1}
Q_1 \in L_{\rm loc}^{\infty}(\R_{+}; H^1)\cap L_{\rm loc}^2 (\R_{+}; H^2 ), \quad  \quad u_1 \in L_{\rm loc}^{\infty}(\R_{+}; L^2 )\cap L_{\rm loc}^{2}(\R_{+}; H^1 ).
\en
Theorem {\rm \ref{regularity-thm-2d}} gives the existence of a strong solution $(Q_2 , u_2)$ such that
\be\label{Q2u2}
Q_2 \in L_{\rm loc}^{\infty}(\R_{+}; H^{s+1})\cap L_{\rm loc}^2 (\R_{+}; H^{s+2} ), \quad \quad u_2 \in L_{\rm loc}^{\infty}(\R_{+}; H^{s} )\cap L_{\rm loc}^{2}(\R_{+}; H^{s+1} ).
\en
Then $(Q_1 , u_1)=(Q_2, u_2)$.
\end{Theorem}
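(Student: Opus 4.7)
The plan is to form the differences $\widetilde{Q}:=Q_1-Q_2$ and $\widetilde{u}:=u_1-u_2$, derive an energy estimate modelled on Proposition \ref{energy-inequality} for the pair $(\widetilde Q,\widetilde u)$, and close the argument via Gronwall's inequality using the zero initial data $\widetilde Q(0)=0$, $\widetilde u(0)=0$. Since $(Q_2,u_2)$ enjoys the higher regularity \eqref{Q2u2}, every factor of the strong solution appearing in the linearised system will be controlled in $L^\infty_{\rm loc}$ or at least in strong Sobolev norms, while the differences only carry the weak-solution regularity \eqref{Q1u1}.

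First, I would subtract the two copies of system \eqref{Qtensor}, splitting each nonlinearity as a linear-in-difference combination, e.g., $|Q_1|D_1-|Q_2|D_2=|Q_1|\widetilde D+(|Q_1|-|Q_2|)D_2$, and similarly for $Q\Omega-\Omega Q$, $\na Q\odot\na Q$, $Q\D Q-\D Q\,Q$, $|Q|H$, and the polynomial bulk terms $Q^2$ and $Q\,\tr(Q^2)$. The pointwise bound $\bigl||Q_1|-|Q_2|\bigr|\le|\widetilde Q|$ keeps the nonsmooth factor $|Q|$ under control throughout.

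Next, I would test the $\widetilde Q$-equation against a difference analogue of $-H$ (that is, $-\D\widetilde Q+a\widetilde Q$ plus the linearised cubic correction) and the $\widetilde u$-equation against $\widetilde u$, take traces, integrate by parts over $\R^2$, and sum. The four cancellations $\i_1+\i_6=0$, $\i_3+\i_8=0$, $\i_4=0$, $\i_5+\i_7=0$ used in Proposition \ref{energy-inequality} carry over verbatim to the linearised setting: the convective cross-term cancels against $\na\cd(\na\widetilde Q\odot\na Q_i+\na Q_i\odot\na\widetilde Q)$ after integration by parts; the commutator identity of Lemma \ref{estimate-matrix} gives $(\widetilde\Omega Q_i-Q_i\widetilde\Omega,\D\widetilde Q)+(Q_i\D\widetilde Q-\D\widetilde Q\,Q_i,\na\widetilde u)=0$; and the algebraic splitting $\na\widetilde u-\widetilde D=\widetilde\Omega$ still kills the $\lambda|Q|\widetilde D$ together with $\lambda|Q|\widetilde H$ pair. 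What survives on the right-hand side is a sum of expressions that each pair a difference with a factor drawn from the strong solution, multiplied by at most one genuinely high-order term such as $\D\widetilde Q$ or $\na\widetilde u$, which can then be absorbed into the dissipation on the left by H\"older, Young, and the Gagliardo--Nirenberg inequalities of Lemma \ref{gn-inequality}. Invoking $Q_2\in L^\infty_{\rm loc}(\R_+;H^{s+1})\hookrightarrow L^\infty_{\rm loc}(\R_+;L^\infty)$ in two dimensions for any $s>0$, together with $\na Q_2\in L^2_{\rm loc}(H^{s+1})$ and $u_2\in L^\infty_{\rm loc}(H^s)\cap L^2_{\rm loc}(H^{s+1})$, one arrives at an inequality of the form
$$\frac{d}{dt}\bigl(\|\widetilde Q\|_{H^1}^2+\|\widetilde u\|_{L^2}^2\bigr)+\tfrac{\Gamma}{2}\|\D\widetilde Q\|_{L^2}^2+\tfrac{\mu}{2}\|\na\widetilde u\|_{L^2}^2\le C(t)\bigl(\|\widetilde Q\|_{H^1}^2+\|\widetilde u\|_{L^2}^2\bigr),$$
with $C(\cdot)\in L^1_{\rm loc}(\R_+)$ determined by the norms of $(Q_1,u_1)$ and $(Q_2,u_2)$ in \eqref{Q1u1}--\eqref{Q2u2}. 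Gronwall's inequality (Lemma \ref{Gronwall}) with zero initial data then forces $\widetilde Q\equiv 0$ and $\widetilde u\equiv 0$.

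The principal obstacle I anticipate is the borderline pair $\lambda|Q|\widetilde H$ versus $\lambda|Q|\widetilde D$ together with the commutator $Q_2\D\widetilde Q-\D\widetilde Q\,Q_2$ versus $\widetilde\Omega Q_2-Q_2\widetilde\Omega$: these are the only contributions of top order in two factors simultaneously, and they can only be closed by (i) using the \emph{exact} cancellations from \S\ref{a-priori-estimates} rather than by attempting separate estimation, and (ii) exploiting the $L^\infty$ bound on $Q_2$ from the 2D embedding $H^{s+1}\hookrightarrow L^\infty$, which is precisely where the hypothesis $s>0$ becomes essential.
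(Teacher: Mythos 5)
Your proposal follows essentially the same route as the paper's \S 5: form the differences, test the $Q$-difference equation with a second-order multiplier and the $u$-difference equation with $\delta u$, reuse the cancellations of Proposition \ref{energy-inequality} and Lemma \ref{estimate-matrix} to eliminate the simultaneous top-order pairings, absorb the remaining terms into the dissipation via H\"older/Young and the two-dimensional embeddings afforded by $s>0$ (the paper additionally uses the $L^{2/s}$--$L^{2/(1-s)}$ splitting for factors like $\D Q_2$ that are only square-integrable in time), and conclude by Gronwall with zero initial data. The one point where your plan diverges is the multiplier: the paper tests with $-\D\,\delta Q+\delta Q$ and moves all lower-order and cubic terms to the right-hand side, whereas testing with the full linearised $-H$ (including the cubic correction) as you suggest would not yield an exact time derivative of a difference energy and would force you to pair $\partial_t\delta Q$ against $c\,\delta Q\,\tr(Q_1^2)$, for which $\partial_t Q_1$ lacks the regularity, so the simpler multiplier is the right choice.
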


\begin{proof}
Denote $\delta Q:=Q_1 -Q_2$ and $\delta u:=u_1 -u_2$. Then $(\delta Q, \delta u)$ satisfies the following system:

\be \label{unique}
\begin{cases}
\del_{t}\dl Q+\dl u \cdot \na\dl Q-\dl \O \dl Q+\dl Q \dl \O+\dl u \cdot \na Q_2 +u_2 \cdot \na \dl Q
 +Q_2 \dl \O+\dl Q \O_2-\dl \O Q_2 -\O_2 \dl Q
\\
 \quad \quad\quad\; =\la |Q_1 |\dl D +\la (|Q_1|-|Q_2|)D_2 \\
 \quad \quad\quad \quad+\Gamma \big(\D\dl Q-a\dl Q -c(\dl Q \tr (Q_{1}^2) +Q_2 \tr(Q_1 \dl Q+\dl Q Q_2))\big), \\[1mm]
 \del_{t}\dl u+\pa (\dl u \cdot \na \dl u)\\
\quad \quad\quad \;  =\mu \D \dl u-\pa (\na \cdot (\na \dl Q \odot \na \dl Q))\\
 \quad \quad\quad \quad+\pa (\na \cdot (\dl Q\D \dl Q-\D \dl Q \dl Q))-\pa (u_2 \cdot \na \dl u +\dl u \cdot \na u_2)\\
 \quad \quad\quad \quad +\pa (\na \cdot (\dl Q \D Q_2 +Q_2 \D \dl Q -\D \dl Q Q_2 -\D Q_2 \dl Q))\\
 \quad \quad\quad \quad -\pa (\na \cdot (\na \dl Q \odot \na Q_2+\na Q_2 \odot \na \dl Q))-\la \pa (\na \cdot (|Q_1 |(\D \dl Q -a \dl Q)))\\
 \quad \quad\quad \quad -\la \pa (\na \cdot (|Q_1 |-|Q_2 |)(\D Q_2 -a Q_2))+\la c \pa \big(\na \cdot (|Q_1 |\dl Q \tr (Q_1^2))\big)\\
 \quad \quad\quad \quad +\la c \pa \big(\na \cdot (|Q_1 |Q_2 \tr(Q_1 \dl Q +\dl Q Q_2 ))\big)\\
 \quad \quad\quad \quad +\la c \pa \big(\na \cdot ((|Q_1 |-|Q_2 |)Q_2 \tr (Q_2^2))\big)+\kappa \pa (\na \cdot \dl Q).
\end{cases}
\en
Similarly to the proof of Proposition \ref{energy-inequality},
we  multiply the first equation in (\ref{unique}) by $-\D \dl Q +\dl Q$,
take the trace, and integrate by parts over $\R^2$ to obtain
\be \label{unique-Q}
\begin{split}
&\frac{1}{2}\frac{d}{dt}\big(\|\na \dl Q\|_{L^2}^2 +\|\dl Q\|_{L^2}^2\big)
 -(\dl u \cdot \na \dl Q, \D \dl Q)-(\dl Q \dl \O -\dl \O \dl Q, \D \dl Q)\\
&\quad-(\dl u \cdot \na Q_2 +u_2 \cdot \na \dl Q+\dl Q \O_2 -\O_2 \dl Q, \D \dl Q)+(\dl u \cdot \na Q_2 , \dl Q)\\
&\quad -(Q_2 \dl \O -\dl \O Q_2 , \D \dl Q)+(Q_2 \dl \O -\dl \O Q_2 , \dl Q)\\
&=-\Gamma \|\D \dl Q\|_{L^2}^2 -\Gamma \|\na \dl Q\|_{L^2}^2 -a\Gamma \|\na \dl Q\|_{L^2}^2 -a\Gamma \|\dl Q\|_{L^2}^2 \\
&\quad +\la (|Q_1 |\dl D+(|Q_1 |-|Q_2 |)D_2 , -\D \dl Q+\dl Q)+c\Gamma  (\dl Q\tr (Q_1^2) , \D \dl Q)\\
&\quad +c\Gamma (Q_2 \tr(Q_1 \dl Q +\dl Q Q_2), \D \dl Q)-c\Gamma (\dl Q |Q_1 |^2 , \dl Q) \\
&\quad -c\Gamma (Q_2 \tr (Q_1 \dl Q+\dl Q Q_2), \dl Q).
\end{split}
\en
Multiplying the second equation in (\ref{unique}) by $\dl u$ and integrating by parts over $\R^2$ yields
\be \label{unique-u}
\begin{split}
&\!\!\!\! \frac{1}{2}\frac{d}{dt}\|\dl u\|_{L^2}^2 +\mu \|\na \dl u\|_{L^2}^2 \\
=&-(\na \cdot (\na \dl Q\odot \na \dl Q), \dl u)-(\dl Q \D \dl Q-\D \dl Q\dl Q, \na \dl u^{\top})\\
&-(u_2 \cdot \na \dl u+\dl u \cdot \na u_2 , \dl u)-(Q_2 \D \dl Q-\D \dl QQ_2 , \na \dl u^{\top})\\
&-(\dl Q \D Q_2  -\D Q_2 \dl Q, \na \dl u^{\top})+(\na \dl Q \odot \na Q_2 +\na Q_2 \odot \na \dl Q, \na \dl u^{\top})\\
&+\la (|Q_1 |\D \dl Q, \na \dl u)-a\la (|Q_1 |\dl Q_1 , \na \dl u)+\la ((|Q_1 |-|Q_2 |)(\D Q_2 -a Q_2 ), \na \dl u)\\
&-c\la (|Q_1 |\dl Q \,\tr (Q_1^2) , \na \dl u)-c\la \big(|Q_1 |Q_2\, \tr(Q_1 \dl Q+\dl Q Q_2 ), \na \dl u\big)\\
&-c\la ((|Q_1 |-|Q_2 |)Q_2 \,\tr (Q_2^2), \na \dl u)-\kappa(\dl Q, \na \dl u).
\end{split}
\en
Adding (\ref{unique-Q}) and (\ref{unique-u}) together, and performing analogous cancellations to those
in the proof of Proposition \ref{energy-inequality}, we have
\begin{align*}
&\!\!\!\! \frac{1}{2}\frac{d}{dt}\big(\|\na \dl Q\|_{L^2}^2 +\|\dl Q\|_{L^2}^2 +\|\dl u\|_{L^2}^2\big)
+\mu \|\na \dl u\|_{L^2}^2 +\Gamma\big(\|\D \dl Q\|_{L^2}^2 + \|\na \dl Q\|_{L^2}^2\big) \\
=\, &(\dl u \cdot \na Q_2 +u_2 \cdot \na \dl Q, \D \dl Q)+(\dl Q \O_2 -\O_2 \dl Q, \D \dl Q)-(\dl u \cdot \na Q_2 , \dl Q)\\
&+\la (|Q_1 |\dl D, \dl Q)-\la(|Q_1 |-|Q_2 |)D_2 , \D \dl Q - \dl Q)+c\Gamma  (\dl Q\,\tr (Q_1^2) , \D \dl Q)\\
&+c\Gamma (Q_2 \tr(Q_1 \dl Q +\dl Q Q_2), \D \dl Q)-c\Gamma (\dl Q |Q_1 |^2 , \dl Q) -c\Gamma (Q_2 \tr (Q_1 \dl Q+\dl Q Q_2), \dl Q)\\
&+(\dl u \cdot \na \dl u, u_{2})-(\dl Q \D Q_2 -\D Q_2 \dl Q, \na \dl u^{\top})+(\na \dl Q \odot \na Q_2 +\na Q_2 \odot \na \dl Q, \na \dl u^{\top})\\
&-a\la (|Q_1 |\dl Q , \na \dl u)+\la ((|Q_1 |-|Q_2 |)(\D Q_2 -a Q_2 ), \na \dl u)-c\la (|Q_1 |\dl Q\, \tr (Q_1^2), \na \dl u)\\
&-c\la (|Q_1 |Q_2 \,\tr(Q_1 \dl Q+\dl Q Q_2 ), \na \dl u)-c\la ((|Q_1 |-|Q_2 |)Q_2\, \tr(Q_2^2), \na \dl u)\\
&-\kappa(\dl Q, \na \dl u)-a\Gamma \big(\|\na \dl Q\|_{L^2}^2 +\|\dl Q\|_{L^2}^2\big)\\
\leq \, &\|\D \dl Q\|_{L^2}\big(\|\dl u\|_{L^2}\| \na Q_2 \|_{L^\infty}+\|\na \dl Q\|_{L^2}\|u_2 \|_{L^{\infty}}
    +2\|\dl Q\|_{L^{\frac{2}{s}}}\| \O_2 \|_{L^{\frac{2}{1-s}}}\big)\\
&+\|\dl Q\|_{L^2}\|\dl u\|_{L^2}\|\na Q_2 \|_{L^{\infty}}+|\la| \|\dl Q\|_{L^2}\|Q_1 \|_{L^{\infty}}\|\na \dl u\|_{L^2}\\
&+|\la|\big(\|\D \dl Q\|_{L^2}+\|\dl Q\|_{L^{2}}\big) \|\dl Q\|_{L^{\frac{2}{s}}}\| D_2 \|_{L^{\frac{2}{1-s}}}+c\Gamma \|\D \dl Q\|_{L^2}\|\dl Q\|_{L^4}\|Q_1 \|_{L^8}^2 \\
&+c\Gamma \|\D \dl Q\|_{L^2}\|Q_2 \|_{L^{\infty}}\big(\|Q_1 \|_{L^4}+\|Q_2 \|_{L^4}\big)\|\dl Q\|_{L^4}+c\Gamma \|\dl Q\|_{L^4}^{2}\|Q_1 \|_{L^2}\|Q_1 \|_{L^{\infty}}\\
&+c\Gamma \|\dl Q\|_{L^4}^{2}\|Q_2 \|_{L^2}\big(\|Q_1 \|_{L^{\infty}}+\|Q_2 \|_{L^{\infty}}\big)+\|u_2 \|_{L^{\infty}}\|\dl u\|_{L^2}\|\na \dl u\|_{L^2}\\
&+2\|\dl Q\|_{L^{\frac{2}{s}}}\|\D Q_2 \|_{L^{\frac{2}{1-s}}}\| \na \dl u\|_{L^2}+2\|\na \dl Q\|_{L^2}\|\na \dl u\|_{L^2}\|\na Q_2 \|_{L^{\infty}}\\
&+|a\la| \|\na \dl u\|_{L^2}\|\dl Q\|_{L^2}\|Q_1 \|_{L^{\infty}}
  +C\|\na \dl u\|_{L^2}\big(\|\dl Q\|_{L^{\frac{2}{s}}}\|\D Q_2 \|_{L^{\frac{2}{1-s}}}+\|\dl Q\|_{L^{2}} \|Q_2 \|_{L^{\infty}}\big)\\
&+c|\la| \|\na \dl u\|_{L^2}\|\dl Q\|_{L^4}\big(\|Q_1 \|_{L^{\infty}}(\|Q_1 \|^{2}_{L^8}+\|Q_2 \|^{2}_{L^8})+\|Q_2 \|_{L^8}^2 \|Q_2 \|_{L^{\infty}}\big)\\
&+|\kappa| \|\na \dl u\|_{L^2}\|\dl Q\|_{L^2}-a\Gamma \big(\|\na \dl Q\|_{L^2}^2 +\|\dl Q\|_{L^2}^2\big)\\
\leq \, & \frac{\Gamma}{2}\|\D \dl Q\|_{L^2}^{2}+\frac{\mu}{2}\|\na \dl u\|_{L^2}^{2}+C\big(\|\na Q_2 \|_{L^\infty}^{2}+\|u_2 \|_{L^\infty}^{2}\big)\|\dl u\|_{L^2}^{2}\\
&+C \big(1+\|Q_2 \|_{L^{\infty}}^{2}+\|Q_1 \|_{L^{\infty}}^{2}\big)\|\dl Q\|_{L^2}^{2}+C(1+\|u_2 \|_{L^{\infty}}^{2}+\|\na Q_2 \|_{L^{\infty}}^{2})\|\na \dl Q\|_{L^2}^{2}\\
&+C\Gamma \big(\|Q_1 \|_{L^8}^{4}+\|Q_2 \|_{L^{\infty}}^{2}(\|Q_1 \|_{L^4}^{2}+\|Q_2 \|_{L^4}^{2})+\|Q_1 \|_{L^8}^{4}\|Q_1 \|_{L^{\infty}}^{2}+\|Q_2 \|_{L^8}^{4}\|Q_2 \|_{L^{\infty}}^{2}\\\
&\qquad\quad +\|Q_1 \|_{L^{\infty}}^{2}(\|Q_1 \|_{L^8}^{4}+\|Q_2 \|_{L^8}^{4})+(\|Q_1 \|_{L^{\infty}}+\|Q_2 \|_{L^{\infty}})(\|Q_1 \|_{L^{2}}+\|Q_2 \|_{L^{2}})\big)\|\dl Q\|_{L^4}^2 \\
&+C\big(\|\na u_{2}\|_{L^{\frac{2}{1-s}}}^{2}+\|\D Q_{2}\|_{L^{\frac{2}{1-s}}}^{2}\big)\|\dl Q\|_{L^{\frac{2}{s}}}^{2}.
\end{align*}
Since we restrict ourselves to the two-dimensional case,
$\|\dl Q\|^{2}_{L^4}$ and $\|\dl Q\|_{L^{\frac{2}{s}}}^{2}$ can be controlled by $\|\dl Q\|^{2}_{L^2}+\|\na \dl Q\|^{2}_{L^2}$.
Moreover, we know from the imbedding theorem that
$$
\|\na u_{2}\|_{L^{\frac{2}{1-s}}}\leq C \|u_{2}\|_{H^{s+1}}, \quad
 \|\D Q_{2}\|_{L^{\frac{2}{1-s}}}\leq C \|Q_{2}\|_{H^{s+2}}.
$$
In addition, from the conditions of $Q_1, u_1 , Q_2$, and $u_2$, {\it i.e.}, (\ref{Q1u1})--(\ref{Q2u2}),
we know that all the coefficients of $\|\dl u\|_{L^2}^{2}$, $\|\dl Q\|_{L^2}^{2}$, $\|\na \dl Q\|_{L^2}^{2}$,
$\|\dl Q\|_{L^4}^{2}$, and $\|\dl Q\|_{L^{\frac{2}{s}}}^{2}$ are integrable with respect to time.
Therefore,  we can use Gronwall's inequality (Lemma \ref{Gronwall}) to conclude the uniqueness of the solution.
\end{proof}

\bigskip

\appendix
\section{Some Basic Theories and Lemmas}\label{appendix-a}

In this section, we review some important theories and lemmas that are used extensively in this paper.

First, let us introduce the Littlewood-Paley theory; see \cite{B-C-D-2011} for the details.

\begin{Proposition}[Dyadic Partition of Unity]
Let $\mathcal{C}$ be  annulus $\{\xi \in \R^{d} \,:\, \frac{3}{4}\leq |\xi| \leq \frac{8}{3}\}$.
There exist radial functions $\chi$ and $\va$, valued in interval $[0, 1]$,
belonging to $\mathcal{D}(B(0, \frac{4}{3}))$ and $\mathcal{D}(\mathcal{C})$, respectively,
such that
\begin{align}
& \chi(\xi)+\sum_{j\geq 1}\va (2^{-j}\xi)=1   \qquad\mbox{for any $\xi \in \R^d$}, \notag\\
& \sum_{j\in \mathbb{Z}}\va(2^{-j}\xi)=1 \qquad\mbox{for any $\xi \in \R^d \setminus \{0\}$},\notag\\
&|j-j^{'}|\geq 2\,\, \Longrightarrow \,\,\supp (\va(2^{-j}\cdot))\cap \supp (\va(2^{-j^{'}}\cdot))=\emptyset,\notag\\
&j\geq 1 \,\, \Longrightarrow \,\, \supp (\chi) \cap \supp (\va(2^{-j}\cdot))=\emptyset. \notag
\end{align}
\end{Proposition}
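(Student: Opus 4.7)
The plan is to construct $\chi$ first and then to define $\va$ from $\chi$ by the standard dyadic-difference trick. Specifically, I would pick a smooth radial function $\chi \colon \R^d \to [0,1]$ that is identically $1$ on $\overline{B(0, 3/4)}$, whose support is a compact subset of $B(0, 4/3)$, and which is nonincreasing as a function of $|\xi|$; such a $\chi$ is obtained, for instance, by mollifying the indicator of an intermediate ball against a small smooth radial approximate identity. I would then set
$$\va(\xi) := \chi(\xi/2) - \chi(\xi).$$
Since $|\xi/2| \leq |\xi|$, monotonicity of $\chi$ gives $0 \leq \va(\xi) \leq \chi(\xi/2) \leq 1$, so $\va$ is smooth, radial, and $[0,1]$-valued.

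Next I would verify the support of $\va$. If $|\xi| \leq 3/4$, then $|\xi/2| \leq 3/8 \leq 3/4$, so $\chi(\xi/2) = \chi(\xi) = 1$ and $\va(\xi) = 0$; if $|\xi| \geq 8/3$, then $|\xi/2| \geq 4/3$, so $\chi(\xi/2) = \chi(\xi) = 0$. Hence $\supp(\va) \subset \mathcal{C}$, and consequently $\supp(\va(2^{-j}\cdot)) \subset \{2^{j}\cdot 3/4 \leq |\xi| \leq 2^{j}\cdot 8/3\}$. The disjointness claims are then numerical: if $|j-j'|\geq 2$ the corresponding dyadic annuli are separated, since $8/3 < 3 = 4 \cdot (3/4)$; and for $j \geq 1$ the set $\{|\xi| \geq 3/2\}$ is disjoint from $\supp(\chi)$.

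The two partition-of-unity identities follow by telescoping. The definition of $\va$ gives
$$\chi(\xi) + \sum_{j=1}^{N} \va(2^{-j}\xi) = \chi(\xi) + \sum_{j=1}^{N}\bigl(\chi(2^{-j-1}\xi) - \chi(2^{-j}\xi)\bigr) = \chi(2^{-N-1}\xi).$$
For any fixed $\xi$, taking $N$ large enough that $|2^{-N-1}\xi| \leq 3/4$ makes the right-hand side equal to $1$; moreover the infinite sum is locally finite (by the support claim above), so passing to the limit is trivial and the first identity holds for every $\xi \in \R^d$. For the second identity, I would telescope in both directions, obtaining $\sum_{j=-M}^{N}\va(2^{-j}\xi) = \chi(2^{-N-1}\xi) - \chi(2^{M}\xi)$, and send $N, M \to \infty$: for $\xi \neq 0$, the second term vanishes as soon as $|2^{M}\xi| \geq 4/3$, while the first tends to $1$ as before. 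The only slightly delicate step in the entire argument is the initial construction of a smooth radial nonincreasing cut-off with the prescribed support and level set, which is entirely standard via mollification; there is no substantive obstacle, and the heart of the result is the algebraic telescoping identity.
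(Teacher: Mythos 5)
The paper does not actually prove this proposition: it is quoted verbatim from Bahouri--Chemin--Danchin \cite{B-C-D-2011} (Proposition~2.10 there) and cited as such, so there is no in-paper argument to compare against, and your construction --- a radial, radially nonincreasing cut-off $\chi$ together with $\va(\xi)=\chi(\xi/2)-\chi(\xi)$ --- is exactly the standard one. That said, your key display is false as written, and the error points at a typo in the statement itself. Telescoping the sum starting at $j=1$ gives
\[
\chi(\xi)+\sum_{j=1}^{N}\bigl(\chi(2^{-j-1}\xi)-\chi(2^{-j}\xi)\bigr)
=\chi(2^{-N-1}\xi)+\chi(\xi)-\chi(\xi/2)
=\chi(2^{-N-1}\xi)-\va(\xi),
\]
which tends to $1-\va(\xi)$ as $N\to\infty$, not to $1$; the cancellation you invoke requires the sum to start at $j=0$, so that the first summand $\chi(2^{-1}\xi)-\chi(\xi)$ absorbs the leading $\chi(\xi)$.

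This is not something you can repair by choosing $\chi$ and $\va$ more cleverly. Since $\supp (\chi)\subset B(0,\frac{4}{3})$ and $\supp\bigl(\va(2^{-j}\cdot)\bigr)\subset\{2^{j}\cdot\frac{3}{4}\leq|\xi|\leq 2^{j}\cdot\frac{8}{3}\}$, every term of $\chi(\xi)+\sum_{j\geq 1}\va(2^{-j}\xi)$ vanishes on the nonempty shell $\frac{4}{3}\leq|\xi|<\frac{3}{2}$, so the first identity cannot hold there for any admissible pair $(\chi,\va)$. The statement should read $\sum_{j\geq 0}$, as in \cite{B-C-D-2011}; with that single index correction your argument --- the mollification construction of $\chi$, the support computations, the disjointness claims, and the two-sided telescoping that yields the homogeneous identity on $\R^{d}\setminus\{0\}$ --- is complete and correct.
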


\begin{Definition} 
The homogeneous dyadic blocks $\D_{j}$ and the homogeneous low-frequency cut-off operators $S_{j}$ are defined for all $j\in \mathbb{Z}$ by
\begin{align}
\D_{j}u=\mathcal{F}^{-1}(\va(2^{-j}\xi)\mathcal{F}u)=2^{jd}\int_{\R^d}h(2^{j}y)u(x-y)dy,\notag\\
S_{j}u=\mathcal{F}^{-1}(\chi(2^{-j}\xi)\mathcal{F}u)=2^{jd}\int_{\R^d}\tilde{h}(2^{j}y)u(x-y)dy,\notag
\end{align}
where $\mathcal{F}$ denotes the Fourier transform on $\R^d$, and
\begin{align*}
h:=\mathcal{F}^{-1}\va, \quad \tilde{h}:=\mathcal{F}^{-1}\chi. \notag
\end{align*}

We define the Sobolev norm of space $H^{s}$ as
$$
\|u\|_{H^{s}}:=\big(\|S_{0}u\|_{L^2}^2 +\sum_{j\in \mathbb{N}}2^{2qs}\|\D_{j}u\|_{L^2}^2\big)^{\frac{1}{2}}.
$$

Next, we recall the Bony's paraproduct decomposition for two appropriately smooth functions $u$ and $v$:
\begin{align}
uv=T_{u}v+T_{v}u+R(u, v)=T_{u}v+T^{'}_{v}u, \notag
\end{align}
where
\begin{align}
&T_{u}v:=\sum_{j}S_{j-1}u\D_{j}v, \quad
R(u, v):=\sum_{|j-j^{'}|\leq 1}\D_{j}u\D_{j^{'}}v,\notag\\
&T^{'}_{v}u=T_{v}u+R(u, v)=\sum_{j}S_{j+2}v\D_{j}u.\notag
\end{align}
Then we have
\begin{equation*}
\begin{split}
\D_{j}(uv)&=\D_{j}T_{u}v+\D_{j}T^{'}_{v}u\\
&=\D_{j}\sum_{j^{'}}S_{j^{'}-1}u\D_{j^{'}}v+\D_{j}\sum_{j^{'}}S_{j^{'}+2}v\D_{j^{'}}u\\
&=\sum_{|j-j^{'}|\leq 5}\D_{j}(S_{j^{'}-1}u\D_{j^{'}}v)+\sum_{j^{'}>j-5} \D_{j}(S_{j^{'}+2}v\D_{j^{'}}u)\\
&=S_{j-1}u\D_{j}v+\sum_{|j-j^{'}|\leq 5}[\D_{j}, S_{j^{'}-1}u]\D_{j^{'}}v\\
&\quad +\sum_{|j-j^{'}|\leq 5}(S_{j^{'}-1}u-S_{j-1}u)\D_{j}\D_{j^{'}}v+\sum_{j^{'}>j-5}\D_{j}(S_{j^{'}+2}v\D_{j^{'}}u).
\end{split}
\end{equation*}
\end{Definition}

In order to obtain the uniqueness and higher regularity of the weak-strong solutions in \S 4--\S 5,
we now recall the following useful inequalities that we use extensively.
These inequalities follows from the Bernstein-type lemma and the commutator estimates
in Chapter 2 of \cite{B-C-D-2011} by the construction of $\D_{j}$.
\medskip

\noindent \textbf{Bernstein-type inequalities}:
\begin{align}
&\|\na S_{j}u\|_{L^p}\leq C2^{j}\|u\|_{L^p} \qquad \mbox{for any $1\leq p \leq \infty$}, \notag \\
&C2^{j}\|\D_{j}u\|_{L^p}\leq \|\na \D_{j}u\|_{L^p}\leq C2^{j}\|\D_{j}u\|_{L^p} \qquad \mbox{for any $1\leq p \leq \infty$}, \notag\\
&\|\D_{j}u\|_{L^q}\leq C2^{d(\frac{1}{p}-\frac{1}{q})j}\|\D_{j}u\|_{L^p} \qquad \mbox{with}\,\,\,\, 1\leq p\leq q,\notag\\
&\|S_{j}u\|_{L^q}\leq C2^{d(\frac{1}{p}-\frac{1}{q})j}\|S_{j}u\|_{L^p} \qquad \mbox{with}\,\,\,\, 1\leq p\leq q. \notag
\end{align}
\noindent \textbf{Commutator estimates}:
\begin{align}
\|[\D_{j}, u]v\|_{L^r}\leq C 2^{-j}\|\na u\|_{L^p}\|v\|_{L^q} \qquad \mbox{with}\,\,\,\, \frac{1}{p}+\frac{1}{q}=\frac{1}{r},\notag
\end{align}
where $C$ is independent of $p, q$, and $r$.

In order to deal with the highest derivatives of $u$ in the first equation of system (\ref{Qtensor}) (and the corresponding one
in the approximation systems) and the highest derivatives of $Q$ in the second equation of the system
(and the corresponding one in the approximation systems), we need to introduce the following lemma:

\begin{Lemma}\label{estimate-matrix}
Let $Q$ and $Q^{'}$ be two $d\times d$ symmetric matrices,
and let $\Omega =\frac{1}{2}(\na u-\na u^{\top})$ be the vorticity with $(\na u)_{\a \b}=\del_{\b}u_{\a}$.
Then
\begin{equation*}
(\O Q^{'}-Q^{'}\O, \D Q)-(\na \cdot (Q^{'}\D Q-\D QQ^{'}), u)=0.
\end{equation*}
\end{Lemma}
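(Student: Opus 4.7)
The plan is to rewrite both terms in the identity as the same trace integral, exploiting the symmetry/antisymmetry structure of the objects involved.

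First, I would integrate by parts in the divergence term, using that $u$ and its derivatives decay at infinity (or working on $\mathbb{R}^d$ with appropriate assumptions so that boundary terms vanish). Writing everything in index notation with the convention $(\nabla \cdot M)_\alpha = \partial_\beta M_{\alpha\beta}$, this gives
\[
-(\nabla \cdot (Q'\Delta Q - \Delta Q\, Q'), u) = \int_{\R^d} (Q'_{\alpha\gamma}\Delta Q_{\gamma\beta} - \Delta Q_{\alpha\gamma} Q'_{\gamma\beta})\, \partial_\beta u_\alpha\, dx.
\]
The next key observation is that the matrix $M := Q'\Delta Q - \Delta Q\, Q'$ is antisymmetric: using that $Q'$ and $\Delta Q$ are symmetric, one has $M^\top = (\Delta Q)(Q') - (Q')(\Delta Q) = -M$. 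Therefore, only the antisymmetric part of $\nabla u$, namely $\Omega$, survives the pairing, and the previous display reduces to $\int \mathrm{tr}(M\,\Omega)\, dx$ after contracting with $\Omega_{\alpha\beta}$ and using $M_{\alpha\beta}\Omega_{\alpha\beta} = -\mathrm{tr}(M\Omega)$ (since both $M$ and $\Omega$ are antisymmetric).

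Next I would handle the other piece: by the convention $(A,B) = \int \mathrm{tr}(AB)\, dx$ and symmetry of $\Delta Q$,
\[
(\Omega Q' - Q'\Omega, \Delta Q) = \int_{\R^d} \mathrm{tr}\bigl((\Omega Q' - Q'\Omega)\,\Delta Q\bigr)\, dx.
\]
Applying the cyclic property of the trace in each term, $\mathrm{tr}(\Omega Q'\Delta Q) = \mathrm{tr}(Q'\Delta Q\, \Omega)$ and $\mathrm{tr}(Q'\Omega\Delta Q) = \mathrm{tr}(\Delta Q\, Q'\Omega)$, this becomes exactly $\int \mathrm{tr}\bigl((Q'\Delta Q - \Delta Q\, Q')\,\Omega\bigr)\, dx$, matching the expression obtained above. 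The two terms therefore cancel, proving the identity.

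The only mild obstacle is bookkeeping: one must be careful with the convention $A:B = \mathrm{tr}(AB)$ (which agrees with the Frobenius inner product here because at least one of $\Omega Q' - Q'\Omega$ and $\Delta Q$, respectively $M$ and $\Omega$, can be exploited via symmetry), and with the signs arising from $\Omega^\top = -\Omega$ when converting between $\mathrm{tr}(M\Omega)$ and the contraction $M_{\alpha\beta}\Omega_{\alpha\beta}$. Once these conventions are fixed consistently, the cancellation is a direct consequence of the cyclic invariance of the trace together with the (anti)symmetry structure.
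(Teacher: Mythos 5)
Your argument is correct and is essentially the paper's own proof: integrate by parts, apply the cyclic invariance of the trace to the first term, and use the antisymmetry of $M=Q'\Delta Q-\Delta Q\,Q'$ against the symmetric part of $\nabla u$ (the paper packages the final two steps as $(M,\Omega+\nabla u^{\top})=(M,D)=0$, since $M$ is antisymmetric and $D$ symmetric). One bookkeeping point to fix: you say the divergence term ``reduces to $\int \tr(M\Omega)\,dx$'' while simultaneously invoking $M_{\alpha\beta}\Omega_{\alpha\beta}=-\tr(M\Omega)$, and then claim the two contributions both ``match'' and ``cancel'' --- they must be negatives of each other (the divergence piece is $-\int \tr(M\Omega)\,dx$, the first term is $+\int \tr(M\Omega)\,dx$), so once that sign is written consistently the cancellation goes through exactly as you intend.
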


\begin{proof}
For any two $d\times d$ matrices $A$ and $B$, we know that $\tr (AB)=\tr (BA)$.
Then we have
\begin{align*}
&(\O Q^{'}-Q^{'}\O, \D Q)-(\na \cdot (Q^{'}\D Q-\D QQ^{'}), u)\\
&=(\O Q^{'}-Q^{'}\O, \D Q)+(Q^{'}\D Q-\D QQ^{'}, \na u^{\top})\\
&=(\O Q^{'}, \D Q)-(Q^{'}\O, \D Q)+(Q^{'}\D Q-\D QQ^{'}, \na u^{\top})\\
&=(Q^{'}\D Q, \O)-(\D QQ^{'}, \O)+(Q^{'}\D Q-\D QQ^{'}, \na u^{\top})\\
&=(Q^{'}\D Q-\D QQ^{'}, \O+\na u^{\top})\\
&=(Q^{'}\D Q-\D QQ^{'}, D)=0,
\end{align*}
where, in the last equality, we use the fact that $Q$, $Q^{'}$, and $D$ are symmetric.
\end{proof}

\begin{Remark}
For any $d\times d$ matrix $Q$, we have
\be\label{trace-Q-3}
\tr(Q^3)\leq \frac{\ve}{4}|\tr(Q^2)|^2+\frac{1}{\ve}\tr(Q^2) \qquad \mbox{for any $\ve>0$}.
\en
\end{Remark}

\begin{proof}
Let $x, y$, and $z$ be the eigenvalues of $Q$. Then we have
\begin{equation*}
\tr (Q)=x+y+z, \quad
\tr (Q^2) =x^2 +y^2 +z^2, \quad
\tr (Q^3) =x^3 +y^3 +z^3.
\end{equation*}
Since it is obvious that $\tr(Q^4)\leq |\tr (Q^2)|^2$, we only need to show that, for any $\ve>0$,
the following inequality is true:
\begin{equation*}
\tr(Q^3)\leq \frac{3\ve}{8}\tr(Q^4)+\frac{1}{\ve}\tr(Q^2).
\end{equation*}
By Young's inequality with $\ve$, we have
\begin{align*}
x^3 \leq \frac{x^2}{\ve}+\frac{\ve}{4}x^4,\quad
y^3 \leq \frac{y^2}{\ve}+\frac{\ve}{4}y^4,\quad
z^3 \leq \frac{z^2}{\ve}+\frac{\ve}{4}z^4.
\end{align*}
Then we obtain the desired inequality.
\end{proof}
\begin{Lemma}[Aubin-Lions lemma]\label{al-lemma}
Let $X_{0}, X$, and $X_{1}$ be three Banach spaces with $X_{0}\subseteq X \subseteq X_{1}$,
let $X_{0}$ be compactly embedded in $X$, and let $X$ be continuously embedded in $X_{1}$.
For $1\leq p, q \leq \infty$, let
\begin{equation*}
W=\big\{ u\in L^p (0, T; X_{0}) \, :\, \dot{u}\in L^q (0, T; X_{1})\big\}.
\end{equation*}
Then
\begin{enumerate}
\item[(i)] If $p<\infty$, then the embedding of $W$ into $L^p (0, T; X_{1})$ is compact.\\
\item[(ii)] If $p=\infty$ and $q>1$, then the embedding of $W$ into $C(0, T; X)$ is compact.
\end{enumerate}
\end{Lemma}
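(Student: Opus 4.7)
The plan is to reduce the compactness to the following Ehrling-type interpolation inequality: under the assumptions $X_0 \Subset X \hookrightarrow X_1$, for every $\eta > 0$ there exists $C_\eta > 0$ such that
\[
\|v\|_X \leq \eta \|v\|_{X_0} + C_\eta \|v\|_{X_1} \qquad \text{for all } v \in X_0.
\]
I would prove this by contradiction: if it failed, there would exist $\eta_0 > 0$ and a sequence $\{v_n\} \subset X_0$ with $\|v_n\|_X = 1$, $\eta_0 \|v_n\|_{X_0} \leq 1$, and $\|v_n\|_{X_1} \to 0$. The compact embedding $X_0 \Subset X$ produces a subsequence converging strongly in $X$ to some $v$, while continuity of $X \hookrightarrow X_1$ forces $v = 0$, contradicting $\|v_n\|_X = 1$.

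With Ehrling in hand, let $\{u_n\}$ be a bounded sequence in $W$. First I would apply the inequality to $u_n - u_m$ and integrate in $t$ to obtain
\[
\|u_n - u_m\|_{L^p(0,T;X)} \leq \eta \|u_n - u_m\|_{L^p(0,T;X_0)} + C_\eta \|u_n - u_m\|_{L^p(0,T;X_1)},
\]
so that a uniform bound on $\|u_n\|_{L^p(0,T;X_0)}$ reduces the problem to extracting a subsequence that is Cauchy in $L^p(0,T;X_1)$. To do this I would invoke the Fréchet--Kolmogorov criterion in the Bochner space $L^p(0,T;X_1)$. Pointwise-in-$t$ precompactness in $X_1$ follows from the $L^p(0,T;X_0)$ bound together with $X_0 \Subset X \hookrightarrow X_1$, while uniform equicontinuity in time is provided by the time-derivative bound:
\[
\|u_n(t+h) - u_n(t)\|_{X_1} \leq \int_t^{t+h}\|\dot u_n(\tau)\|_{X_1}\,d\tau \leq \|\dot u_n\|_{L^q(0,T;X_1)}\, h^{1-1/q}.
\]
Combining these two ingredients yields a subsequence convergent in $L^p(0,T;X_1)$, which through the previous display is also Cauchy in $L^p(0,T;X)$, proving part (i).

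For part (ii), with $p = \infty$ and $q > 1$, the exponent $1-1/q$ is strictly positive, so the displayed estimate gives a genuine uniform Hölder-type modulus of continuity for $t \mapsto u_n(t)$ into $X_1$. Ehrling (plus the uniform $L^\infty(0,T;X_0)$ bound to absorb the $X_0$-term) upgrades this to equicontinuity into $X$. Since $\{u_n(t)\}$ is relatively compact in $X$ for each fixed $t$ by $X_0 \Subset X$, the vector-valued Arzelà--Ascoli theorem delivers a subsequence convergent in $C([0,T];X)$.

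The main obstacle is upgrading precompactness in the ambient space $X_1$ to precompactness in $X$; this is exactly the role of Ehrling's inequality, and is also why the restriction $q>1$ is essential in part (ii), since $q=1$ would provide only an $L^1$-absolutely-continuous time modulus rather than a uniform one, blocking the Arzelà--Ascoli step.
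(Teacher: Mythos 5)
The paper does not prove this lemma at all: it is stated in Appendix A as a classical compactness result and implicitly attributed to the cited literature (Aubin's 1963 note and Simon's 1986 paper appear in the bibliography), so there is no in-paper argument to compare against. Your proof is the standard one — Ehrling's interpolation inequality $\|v\|_X \le \eta\|v\|_{X_0} + C_\eta\|v\|_{X_1}$ obtained by contradiction from $X_0 \Subset X \hookrightarrow X_1$, followed by a reduction to compactness in $L^p(0,T;X_1)$ via time-translation estimates for part (i) and by vector-valued Arzel\`a--Ascoli for part (ii) — and the overall architecture is correct.

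Two technical points deserve attention. First, in part (i) the statement permits $q=1$, but your pointwise modulus $\|u_n(t+h)-u_n(t)\|_{X_1} \le \|\dot u_n\|_{L^q(0,T;X_1)}\,h^{1-1/q}$ degenerates when $q=1$ (the exponent vanishes and the bound no longer tends to zero with $h$). The Fr\'echet--Kolmogorov step still goes through, but one must replace the uniform-in-$t$ estimate by the integrated translation estimate $\int_0^{T-h}\|u_n(t+h)-u_n(t)\|_{X_1}\,dt \le h\,\|\dot u_n\|_{L^1(0,T;X_1)}$, which suffices for relative compactness in $L^p(0,T;X_1)$ once combined with the $L^p(0,T;X_0)$ bound. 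Second, your claim that "pointwise-in-$t$ precompactness in $X_1$ follows from the $L^p(0,T;X_0)$ bound" is only an almost-everywhere statement when $p<\infty$; the usual fix is to work with the Steklov averages $h^{-1}\int_t^{t+h}u_n(s)\,ds$, which are uniformly bounded in $X_0$ for every $t$ and approximate $u_n$ in $L^p(0,T;X_1)$ by the translation estimate. Neither issue affects part (ii), where $q>1$ gives a genuine H\"older modulus and the $L^\infty(0,T;X_0)$ bound controls every (Lebesgue) point; with these repairs the proof is complete and matches the classical Lions--Simon argument.
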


\begin{Lemma}[Gagliardo-Nirenberg interpolation inequality \cite{N-1955}]\label{gn-inequality}
Let $1\leq q, r \leq \infty$. For $0\leq j<m$, the following inequalities hold{\rm :}
$$
\|{D}^{j}u\|_{L^p}\leq C \|{D}^{m}u\|_{L^r}^{a}\|u\|_{L^q}^{1-a},
$$
where
$$
\frac{1}{p}=\frac{j}{n}+a(\frac{1}{r}-\frac{m}{d})+(1-a)\frac{1}{q},
$$
for all $a$ in the interval
$$
\frac{j}{m}\leq a \leq 1,
$$
and $C=(d, m, j, q, r, a)$, with the following exceptional cases{\rm :}
\begin{enumerate}
\item[(i)] If $j=0, rm<d$, and $q=\infty$, then we make the additional assumption that either $u$ tends to zero at infinity or $u\in L^{s}$ for some finite $s>0$.

\smallskip
\item[(ii)] If $1<r<\infty$, and $m-j-\frac{d}{r}$ is a nonnegative integer, then it is necessary to assume additionally that $a\neq 1$.
\end{enumerate}
\end{Lemma}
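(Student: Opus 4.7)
The plan is to establish the inequality via a Littlewood-Paley dyadic decomposition combined with an optimization over the cutoff frequency, which is the cleanest modern route (Nirenberg's original argument in \cite{N-1955} instead iterates one-dimensional integration-by-parts estimates with H\"older). First I would verify that the stated exponent relation $\frac{1}{p} = \frac{j}{d} + a(\frac{1}{r} - \frac{m}{d}) + (1-a)\frac{1}{q}$ is forced by dimensional analysis: testing the hypothetical inequality against the dilation $u_{\lambda}(x) := u(\lambda x)$ and matching powers of $\lambda$ on the two sides recovers exactly this relation, so any admissible $a$ must satisfy it and the admissible range $\frac{j}{m} \le a \le 1$ is natural.

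Next, I would expand $u = S_{0} u + \sum_{q \ge 0} \D_{q} u$ and estimate each dyadic block in two competing ways via the Bernstein inequalities recalled in Appendix A: one bound $\|D^{j} \D_{q} u\|_{L^p} \lesssim 2^{qj}\, 2^{qd(\frac{1}{q}-\frac{1}{p})} \|\D_{q} u\|_{L^q}$ that is useful for small $q$ (where the factor $\|u\|_{L^q}$ is available), and a second $\|D^{j} \D_{q} u\|_{L^p} \lesssim 2^{q(j-m)}\, 2^{qd(\frac{1}{r}-\frac{1}{p})} \|\D_{q} D^{m} u\|_{L^r}$ useful for large $q$. I would split the frequency range at some threshold $q_{0}$, apply the first bound for $q \le q_{0}$ and the second for $q > q_{0}$, and then optimize in $q_{0}$; the scaling relation above is exactly what makes both resulting geometric series convergent, and balancing the two contributions converts the split into a weighted geometric-mean inequality delivering the exponents $a$ and $1-a$. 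A final triangle inequality in $L^p$ and summation over $q$ produce the global bound with the correct constant depending on $d, m, j, q, r, a$.

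The main obstacle will be justifying the two exceptional cases. In case (i), with $j=0$, $rm<d$, and $q=\infty$, the low-frequency piece $\|S_{0} u\|_{L^\infty}$ is not controlled by the Bernstein bound alone (a nonzero constant would be a counterexample), so a decay hypothesis on $u$ at infinity or membership in some $L^s$ with finite $s$ is needed to tame the residue; this is precisely the supplementary assumption demanded in the statement. In case (ii), with $1<r<\infty$ and $m-j-\frac{d}{r}$ a nonnegative integer, one of the geometric series above becomes merely logarithmically divergent exactly at $a=1$, ruling out that endpoint and forcing the exclusion $a \ne 1$. Both obstructions are sharp borderline Sobolev phenomena and their resolution reduces to a careful accounting of the Bernstein exponents at the critical scaling, which is the delicate but classical part of the argument.
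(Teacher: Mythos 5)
The paper does not prove this lemma at all: it is stated as a classical result with a citation to Nirenberg \cite{N-1955}, so there is no in-paper argument to measure your proposal against. (Incidentally, the displayed exponent relation in the paper contains a typo, $\frac{j}{n}$ for $\frac{j}{d}$, which you have silently and correctly repaired.) Your Littlewood--Paley route is a genuinely different, and entirely standard, modern alternative to Nirenberg's original elementary argument by iterated one-dimensional integration by parts and H\"older; it is also a natural fit here, since the paper already assembles the dyadic blocks $\D_q$, $S_j$ and the Bernstein inequalities in Appendix A. The overall architecture --- scaling check, two competing Bernstein bounds on each block with rates $2^{kas}$ and $2^{-k(1-a)s}$ for $s=m-\frac{d}{r}+\frac{d}{q}$, optimization of the splitting frequency, and the diagnosis of the two exceptional cases as a low-frequency obstruction and a borderline Sobolev failure respectively --- is sound.

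As a complete proof, however, the sketch leaves several genuine steps open. First, the low-frequency Bernstein bound $\|D^{j}\D_k u\|_{L^p}\lesssim 2^{kj}2^{kd(\frac{1}{q}-\frac{1}{p})}\|\D_k u\|_{L^q}$ requires $p\ge q$ (and the high-frequency one requires $p\ge r$); the hypotheses of the lemma guarantee neither (for example $d=2$, $m=1$, $j=0$, $r=1$, $q=\infty$, $a=\frac12$ gives $p=4<q$), and in that regime one must interpolate the two block estimates against each other rather than use one per frequency range. Second, the balancing argument only yields the range $\frac{j}{m}\le a<1$ when $s>0$: the endpoint $a=1$ is the Sobolev embedding itself, which the splitting does not produce and which must be supplied separately (it is admitted by the lemma whenever $m-j-\frac{d}{r}$ fails to be a nonnegative integer), and the degenerate case $s=0$, where both dyadic rates vanish, needs its own treatment. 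Third, the choice between homogeneous and inhomogeneous decompositions matters for the residual block $S_{0}u$ and is exactly what is at stake in exceptional case (i). None of these is fatal --- they are the classical technicalities you defer at the end --- but they are where the actual work of the proof lies.
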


\begin{Lemma}[Gronwall's inequality]\label{Gronwall}
Let $\alpha\ge 0$ and $\beta\ge 0$ be integrable functions on $[0, T]$.
If a differentiable function $Y$ satisfies the differential inequality:
$$
Y'(t) \le \alpha(t)Y(t)+ \beta(t)  \qquad\mbox{for $\, t\in [0,T]$},
$$
then
$$
Y(t)\le Y(0)\exp\Big(\int_0^t\alpha(s)ds\Big)
 +\int_0^t\beta(s)\exp\Big(\int_s^t\alpha(\tau)d\tau\Big)\, ds
\qquad\mbox{for any $\, t\in [0,T]$}.
$$
\end{Lemma}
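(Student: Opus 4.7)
The plan is to apply the classical integrating-factor trick. First I would introduce the positive absolutely continuous function $\mu(t):=\exp\bigl(-\int_0^t\alpha(s)\,ds\bigr)$, which satisfies $\mu(0)=1$ and $\mu'(t)=-\alpha(t)\mu(t)$ for a.e.\ $t\in[0,T]$. The goal is to use this factor to convert the differential inequality into an integrable form, from which the asserted explicit bound pops out after integration.

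The key step is to compute the derivative of the product $\mu(t)Y(t)$. By the product rule together with the hypothesis $Y'(t)\le \alpha(t)Y(t)+\beta(t)$ and positivity of $\mu$, one obtains
$$\frac{d}{dt}\bigl(\mu(t)Y(t)\bigr)=\mu(t)\bigl(Y'(t)-\alpha(t)Y(t)\bigr)\le \mu(t)\beta(t).$$
Integrating this over $[0,t]$ yields $\mu(t)Y(t)-Y(0)\le \int_0^t\mu(s)\beta(s)\,ds$. Dividing through by the positive quantity $\mu(t)$ and using the identities $1/\mu(t)=\exp\bigl(\int_0^t\alpha(\tau)\,d\tau\bigr)$ and $\mu(s)/\mu(t)=\exp\bigl(\int_s^t\alpha(\tau)\,d\tau\bigr)$ then produces exactly the claimed bound
$$Y(t)\le Y(0)\exp\Bigl(\int_0^t\alpha(s)\,ds\Bigr)+\int_0^t\beta(s)\exp\Bigl(\int_s^t\alpha(\tau)\,d\tau\Bigr)\,ds.$$

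There is essentially no substantive obstacle here; this is a textbook argument. The only mild technical point is that $\alpha$ and $\beta$ are assumed only to be integrable (not continuous), so $\mu$ is merely absolutely continuous and the product-rule identity above should be read as holding almost everywhere on $[0,T]$, with the integrated inequality justified by the fundamental theorem of calculus for absolutely continuous functions. Since $Y$ is assumed differentiable on $[0,T]$ (hence absolutely continuous there), the product $\mu Y$ is absolutely continuous, and all the manipulations go through without alteration.
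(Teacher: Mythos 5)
Your integrating-factor argument is correct and is the standard textbook proof of this classical lemma; the paper itself states Lemma \ref{Gronwall} without proof, so there is no alternative argument to compare against. Your remark about reading the product rule almost everywhere when $\alpha,\beta$ are merely integrable is the right technical caveat, and the rest of the computation goes through exactly as you describe.
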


\bigskip
\section{The Estimates of  Inequality (\ref{high-freq-Q-u})}
Before proving inequality \eqref{high-freq-Q-u}, let us first introduce the following Lemma.

\begin{Lemma}\label{2d-high-power-Q-control}
Let $u\in H^{s}\cap L^{p}$ with $p\geq 1$ and $s>0$.
Then, for any $k\geq 2$ and $q\in \mathbb{N}$,
\be
\|\D_{q}u^{k}\|_{L^{p}}\leq C2^{-qs}a_{q, k}(t) \|u\|_{L^{p(k-1)}}^{k-1}\|\na u\|_{H^{s}},
\en
where $\{a_{q, k}(t)\}_{q\in \mathbb{N}}$ is a sequence in $l^{2}$.
\end{Lemma}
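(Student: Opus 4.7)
My plan is to proceed by induction on $k\geq 2$, using Bony's paraproduct decomposition together with the Bernstein-type and commutator estimates recalled in Appendix A. The key observation that makes everything work is the two-dimensional Bernstein inequality $\|\Delta_{q'}u\|_{L^\infty}\leq C\,2^{q'}\|\Delta_{q'}u\|_{L^2}\leq C\|\Delta_{q'}\nabla u\|_{L^2}$, which converts the $L^\infty$ factors arising from the paraproducts into frequency-localized $L^2$ norms of $\nabla u$; writing $\|\Delta_{q'}\nabla u\|_{L^2}=2^{-q's}\tilde a_{q'}(t)\|\nabla u\|_{H^s}$ then exposes the desired $2^{-qs}$ decay after summation.

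For the base case $k=2$, I would apply Bony's decomposition
\[
\Delta_q(u^2)=S_{q-1}u\,\Delta_q u+\!\!\sum_{|q'-q|\leq 5}\!\![\Delta_q,S_{q'-1}u]\Delta_{q'}u+\!\!\sum_{|q'-q|\leq 5}\!\!(S_{q'-1}u-S_{q-1}u)\Delta_q\Delta_{q'}u+\!\!\sum_{q'>q-5}\!\!\Delta_q(S_{q'+2}u\,\Delta_{q'}u),
\]
and bound each of the four pieces in $L^p$. The paraproduct term uses $\|S_{q-1}u\|_{L^p}\lesssim\|u\|_{L^p}$ and the Bernstein trick above; the commutator term uses $\|[\Delta_q,S_{q'-1}u]\Delta_{q'}u\|_{L^p}\lesssim 2^{-q}\|\nabla S_{q'-1}u\|_{L^p}\|\Delta_{q'}u\|_{L^\infty}$, and Bernstein again gives $\|\nabla S_{q'-1}u\|_{L^p}\lesssim 2^{q'}\|u\|_{L^p}$, producing a factor $2^{(q'-q)(1-s)}$ that is summable over $|q'-q|\leq 5$; the remainder term is analogous. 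The high-frequency tail $\sum_{q'>q-5}$ produces a factor $2^{(q-q')s}$ which, although the sum is infinite, yields a geometric series in $q'-q$ and hence an $\ell^2$ sequence in $q$.

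For the induction step I would write $u^{k+1}=u^k\cdot u$ and apply the same decomposition, with $u^k$ playing the role of one of the two factors. The only genuinely new ingredient is the last piece $\sum_{q'>q-5}\Delta_q(S_{q'+2}u\,\Delta_{q'}u^k)$, for which I would apply H\"older's inequality with the conjugate pair $(kp,kp/(k-1))$ and then the inductive hypothesis to $\|\Delta_{q'}u^k\|_{L^{kp/(k-1)}}$; this produces the product $\|u\|_{L^{kp}}\cdot\|u\|_{L^{kp}}^{k-1}=\|u\|_{L^{kp}}^k$, giving exactly the right form for $k+1$. The three remaining pieces are handled exactly as in the base case, with $\|u^k\|_{L^p}$ estimated by $\|u\|_{L^{pk}}^k$.

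The main obstacle I anticipate is bookkeeping the $\ell^2$ sequences $a_{q,k}(t)$ through the induction, in particular checking that at each stage the shifted sums $\sum_{|q'-q|\leq 5}2^{(q'-q)(1-s)}\tilde a_{q'}(t)$ and $\sum_{q'>q-5}2^{(q-q')s}\tilde a_{q'}(t)$ remain in $\ell^2$ uniformly; this is a standard application of Young's convolution inequality with a geometrically decaying kernel (valid because $s>0$ in the second sum and $1-s$ is bounded in the first), but one has to do it carefully so that the constant $C$ does not depend on $k$ or $q$, only on $s$, the spatial dimension, and the structural parameters.
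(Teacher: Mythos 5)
Your proposal is correct and follows essentially the same route as the paper's own proof: induction on $k$, Bony's paraproduct decomposition into the same four pieces, the two-dimensional Bernstein inequality to convert $\|\D_{q'}u\|_{L^\infty}$ into $\|\D_{q'}\na u\|_{L^2}=2^{-q's}\bar a_{q'}(t)\|\na u\|_{H^s}$, and the H\"older pairing $(kp,\,kp/(k-1))$ on the high-frequency tail in the induction step. The only cosmetic difference is that you invoke Young's convolution inequality to track the shifted $\ell^2$ sequences, where the paper simply names the new sequences explicitly at each stage.
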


\begin{proof}
We prove this lemma by induction.

Firstly, for $k=2$, by using the Bony's paraproduct decomposition,
we have
\begin{align*}
\D_{q} (u^{2})&=S_{q-1}u\D_{q}u+\sum_{|q-q^{'}|\leq 5}[\D_{q}, S_{q^{'}-1}u]\D_{q^{'}}u\\
&\quad +\sum_{|q-q^{'}|\leq 5}(S_{q^{'}-1}u-S_{q-1}u)\D_{q}\D_{q^{'}}u
+\sum_{q{'}>q-5}\D_{q}(S_{q^{'}+2}u\D_{q^{'}}u)\\
&=\sum_{1\leq i\leq 4}\i_{i}.
\end{align*}
Let us calculate the right side term by term as follows,
\begin{align*}
\|\i_{1}\|_{L^{p}}&=\|S_{q-1}u\D_{q}u\|_{L^{p}}
\leq C\|u\|_{L^{p}}\|\D_{q}u\|_{L^{\infty}}\leq C\|u\|_{L^{p}}\|\D_{q}\na u\|_{L^{2}}\\
&\leq C2^{-qs}\bar{a}^{(1)}_{q}(t) \|u\|_{L^{p}}\|\na u\|_{H^{s}},
\end{align*}
where $\{\bar{a}_{q}^{(1)}(t)\}_{q\in \mathbb{N}}$ is a sequence in $l^{2}$,
\begin{align*}
\|\i_{2}\|_{L^{p}}&=\|\sum_{|q-q^{'}|\leq 5}[\D_{q}, S_{q^{'}-1}u]\D_{q^{'}}u\|_{L^{p}}\leq C\sum_{|q-q^{'}|
\leq 5}2^{-q}\|\na S_{q^{'}-1}u\|_{L^{p}}\|\D_{q^{'}}u\|_{L^{\infty}}\\
&\leq C \sum_{|q-q^{'}|\leq 5}2^{q^{'}-q}\|u\|_{L^{p}}\|\D_{q^{'}}\na u\|_{L^{2}}\\
&\leq C 2^{-qs}\sum_{|q-q^{'}|\leq 5}2^{(q^{'}-q)(1-s)}\bar{a}_{q^{'}}^{(1)}(t)\|u\|_{L^{p}}\|\na u\|_{H^{s}}\\
&\leq C2^{-qs}\bar{a}_{q}^{(2)}(t)\|u\|_{L^{p}}\|\na u\|_{H^{s}},
\end{align*}
where $\{\bar{a}^{(2)}_{q}(t)\}_{q\in \mathbb{N}}=\{\sum_{|q-q^{'}|\leq 5}2^{(q^{'}-q)(1-s)}\bar{a}^{(1)}_{q^{'}}(t)\}_{q\in \mathbb{N}}$ is a sequence in $l^{2}$,
\begin{align*}
\|\i_{3}\|_{L^{p}}&=\|\sum_{|q-q^{'}|\leq 5}(S_{q^{'}-1}u-S_{q-1}u)\D_{q}\D_{q^{'}}u\|_{L^{p}}\\
&\leq C\sum_{|q-q^{'}|\leq 5}\|S_{q^{'}-1}u-S_{q-1}u\|_{L^{p}}\|\D_{q}\D_{q^{'}}u\|_{L^{\infty}}\\
%
%
&\leq C\sum_{|q-q^{'}|\leq 5}\|u\|_{L^{p}}\|\D_{q^{'}}\na u\|_{L^{2}}\\
&\leq C2^{-qs}\sum_{|q-q^{'}|\leq 5}2^{(q-q^{'})s}\bar{a}^{(1)}_{q^{'}}(t)\|u\|_{L^{p}}\|\na u\|_{H^{s}}\\
&\leq C2^{-qs}\bar{a}^{(3)}_{q}(t)\|u\|_{L^{p}}\|\na u\|_{H^{s}},
\end{align*}
with $\{\bar{a}^{(3)}_{q}(t)\}_{q\in \mathbb{N}}=\{\sum_{|q-q^{'}|\leq 5}2^{(q-q^{'})s}\bar{a}^{(1)}_{q^{'}}(t)\}_{q\in \mathbb{N}}\in l^{2}$,
\begin{align*}
\|\i_{4}\|_{L^{p}}&=\|\sum_{q{'}>q-5}\D_{q}(S_{q^{'}+2}u\D_{q^{'}}u)\|_{L^{p}}
\leq C\sum_{q{'}>q-5}\|S_{q^{'}+2}u\|_{L^{p}}\|\D_{q^{'}}u\|_{L^{\infty}}\\
&\leq C\sum_{q{'}>q-5}\|u\|_{L^{p}}\|\D_{q^{'}}\na u\|_{L^{2}}\leq C2^{-qs}\sum_{q{'}>q-5}2^{(q-q^{'})s}\|u\|_{L^{p}}\bar{a}^{(1)}_{q^{'}}(t)\|\na u\|_{H^{s}}\\
&\leq C2^{-qs}\bar{a}^{(4)}_{q}(t)\|u\|_{L^{p}}\|\na u\|_{H^{s}},
\end{align*}
with $\{\bar{a}^{(4)}_{q}(t)\}_{q\in \mathbb{N}}=\{\sum_{q{'}>q-5}2^{(q-q^{'})s}\bar{a}^{(1)}_{q^{'}}(t)\}_{q\in \mathbb{N}}\in l^{2}$.

Then taking $a_{q, 2}(t)=\mathrm{max}_{1\leq k\leq 4}\{\bar{a}_{q}^{(k)}(t)\}$, we know that the result holds for $k=2$.

Next, we first assume the statement is valid for $k$ and then check the validity
for the case $k+1$.
Similarly to the previous steps, we have
\begin{align*}
\D_{q} (u^{k+1})&=\D_{q}(u^{k}u)=S_{q-1}u^{k}\D_{q}u+\sum_{|q-q^{'}|\leq 5}[\D_{q}, S_{q^{'}-1}u^{k}]\D_{q^{'}}u\\
&\quad +\sum_{|q-q^{'}|\leq 5}(S_{q^{'}-1}u^{k}-S_{q-1}u^{k})\D_{q}\D_{q^{'}}u
+\sum_{q{'}>q-5}\D_{q}(S_{q^{'}+2}u\D_{q^{'}}u^{k})\\
&=\sum_{1\leq i\leq 4}\j_{i},
\end{align*}
and
\begin{align*}
\|\j_{1}\|_{L^{p}}&=\|S_{q-1}u^{k}\D_{q}u\|_{L^{p}}
\leq C\|u^{k}\|_{L^{p}}\|\D_{q}u\|_{L^{\infty}}\leq C\|u\|_{L^{pk}}^{k}\|\D_{q}\na u\|_{L^{2}}\\
&\leq C2^{-qs}\bar{a}^{(1)}_{q}(t) \|u\|_{L^{pk}}^{k}\|\na u\|_{H^{s}},
\end{align*}
\begin{align*}
\|\j_{2}\|_{L^{p}}&=\|\sum_{|q-q^{'}|\leq 5}[\D_{q}, S_{q^{'}-1}u^{k}]\D_{q^{'}}u\|_{L^{p}}
\leq C\sum_{|q-q^{'}|\leq 5}2^{-q}\|\na S_{q^{'}-1}u^{k}\|_{L^{p}}\|\D_{q^{'}}u\|_{L^{\infty}}\\
&\leq C \sum_{|q-q^{'}|\leq 5}2^{q^{'}-q}\|u^{k}\|_{L^{p}}\|\D_{q^{'}}\na u\|_{L^{2}}\\
&\leq C 2^{-qs}\sum_{|q-q^{'}|\leq 5}2^{(q^{'}-q)(1-s)}\bar{a}^{(1)}_{q^{'}}(t)\|u\|_{L^{pk}}^{k}\|\na u\|_{H^{s}}\\
&\leq C2^{-qs}\bar{a}^{(2)}_{q}(t)\|u\|_{L^{pk}}^{k}\|\na u\|_{H^{s}},
\end{align*}
\begin{align*}
\|\j_{3}\|_{L^{p}}&=\|\sum_{|q-q^{'}|\leq 5}(S_{q^{'}-1}u^{k}-S_{q-1}u^{k})\D_{q}\D_{q^{'}}u\|_{L^{p}}\\
&\leq C\sum_{|q-q^{'}|\leq 5}\|S_{q^{'}-1}u^{k}-S_{q-1}u^{k}\|_{L^{p}}\|\D_{q}u\|_{L^{\infty}}\\
&\leq C\sum_{|q-q^{'}|\leq 5}\|u^{k}\|_{L^{p}}\|\D_{q}\na u\|_{L^{2}}\\
&\leq C2^{-qs}\bar{a}^{(1)}_{q}(t)\|u\|_{L^{pk}}^{k}\|\na u\|_{H^{s}},
\end{align*}
and from the induction assumption, one has
\begin{align*}
\|\j_{4}\|_{L^{p}}&=\|\sum_{q{'}>q-5}\D_{q}(S_{q^{'}+2}u\D_{q^{'}}u^{k})\|_{L^{p}}\leq C\sum_{q{'}>q-5}\|u\|_{L^{kp}}\|\D_{q^{'}}u^{k}\|_{L^{\frac{kp}{k-1}}}\\
&\leq C\sum_{q{'}>q-5}\|u\|_{L^{kp}}2^{-q^{'}s}a_{q^{'}, k}(t)\|u\|_{L^{kp}}^{k-1}\|\na u\|_{H^{s}}\\
&\leq C2^{-qs}\sum_{q{'}>q-5}2^{(q-q^{'})s}a_{q^{'}, k}(t)\|u\|_{L^{pk}}^{k}\|\na u\|_{H^{s}}\\
&\leq C2^{-qs}\bar{a}_{q, k+1}(t)\|u\|_{L^{pk}}^{k}\|\na u\|_{H^{s}},
\end{align*}
with $\{\bar{a}_{q, k+1}(t)\}_{q\in \mathbb{N}}=\{\sum_{q{'}>q-5}2^{(q-q^{'})s}a_{q^{'}, k}(t)\}_{q\in \mathbb{N}}\in l^{2}$.

Then taking $a_{q, k+1}(t)=\mathrm{max}\{\bar{a}^{(1)}_{q}(t), \bar{a}_{q}^{(2)}(t), \bar{a}_{q, k+1}(t)\}$, we know that the statement is true for the case $k+1$.
By induction, we complete the proof.
\end{proof}

Then, in order to derive \eqref{high-freq-Q-u}, we estimate the terms on the right-hand side of \eqref{high-frequency} one by one.
For the detailed estimates of the following terms,
we refer the readers to the appendix in \cite{P-Z-2012}:
\begin{align*}
&|\i_1| \leq C2^{-2qs}b_{q}(t)\big(\|u\|_{L^2}^{\frac{1}{2}}\|\na u\|_{L^2}^{\frac{1}{2}}\|\na Q\|_{H^s}^{\frac{1}{2}} \|\D Q\|_{H^s}^{\frac{3}{2}}\\
&\qquad\quad
  +\|\na Q\|_{L^2}^{\frac{1}{2}}\|\D Q\|_{L^2}^{\frac{1}{2}}\|u\|_{H^s}^{\frac{1}{2}}\|\na u\|_{H^{s}}^{\frac{1}{2}}\|\D Q\|_{H^s}\big),\\
&|\i_2|+|\i_3|+|\i_5|+|\i_6|\leq C2^{-2qs}b_{q}(t)\|\na Q\|_{L^2}^{\frac{1}{2}}\|\D Q\|_{L^{2}}^{\frac{1}{2}}\|u\|_{H^s}^{\frac{1}{2}}\|\na u\|_{H^{s}}^{\frac{1}{2}}\|\D Q\|_{H^s},\\
&|\i_4|+|\i_7| \leq C2^{-2qs}b_{q}(t)\|u\|_{L^2}^{\frac{1}{2}}\|\na u\|_{L^{2}}^{\frac{1}{2}}\|\na Q\|_{H^s}^{\frac{1}{2}}\|\D Q\|_{H^s}^{\frac{3}{2}},\\
&|\j_1|\leq C2^{-2qs}b_{q}(t)\| u\|_{L^{2}}^{\frac{1}{2}}\|\na u\|_{L^{2}}^{\frac{1}{2}}\|u\|_{^{H^s}}^{\frac{1}{2}}\|\na u\|_{H^s}^{\frac{3}{2}},\\
&|\j_2|\leq C2^{-2qs}b_{q}(t)\|\na Q\|_{L^2}^{\frac{1}{2}}\|\D Q\|_{L^{2}}^{\frac{1}{2}}\|Q\|_{H^s}^{\frac{1}{2}}\|\na Q\|_{H^{s}}^{\frac{1}{2}}\|\na u\|_{H^s},\\
&|\j_3|+|\j_4|+|\j_6|+|\j_7|\leq C2^{-2qs}b_{q}(t)\|\na Q\|_{L^2}^{\frac{1}{2}}\|\D Q\|_{L^{2}}^{\frac{1}{2}}\|u\|_{H^s}^{\frac{1}{2}}\|\na u\|_{H^{s}}^{\frac{1}{2}}\|\D Q\|_{H^s},\\
&|\j_5|+ |\j_8|\leq C2^{-2qs}b_{q}(t)\|\na Q\|_{L^2}^{\frac{1}{2}}\|\D Q\|_{L^{2}}^{\frac{1}{2}}\|\na Q\|_{H^s}^{\frac{1}{2}}\|\D Q\|_{H^{s}}^{\frac{1}{2}}\|\na u\|_{H^s},
\end{align*}
where $\{b_{q}(t)\}_{q\in \mathbb{N}}$ is a sequence in $l^{1}$.

Next we will give the details of the estimates for the remaining terms:
\begin{align*}
|\i_9|&=|\Gamma a(\D_q Q, \D \D_q Q)|=\Gamma a\|\D_q \na Q\|_{L^2}^2 \leq C2^{-2qs}b^{(1)}_{q}(t)\|\na Q\|_{H^s}^2,\\
|\j_{12}|&=|\kappa (\D_q Q, \D_q \na u)|=|\kappa(\D_q \na Q, \D_q u)| \\
&\leq C\|\D_q \na Q\|_{L^2}\|\D_q u\|_{L^2}\leq C2^{-2qs}b^{(2)}_{q}(t)\|\na Q\|_{H^s}\|u\|_{H^s},
\end{align*}
for some $\{b^{(i)}_{q}(t)\}_{q\in \mathbb{N}}\in l^{1}$ with $i=1, 2$.
Applying Lemma \ref{2d-high-power-Q-control}, we have the following estimates:
\begin{align*}
|\i_{10}|&=|\Gamma c(\D_{q}(Q\tr (Q^2)), \D \D_q Q)|\leq C\|\D_{q}(Q\tr (Q^2))\|_{L^{2}}\|\D \D_q Q\|_{L^{2}}\\
&\leq C2^{-qs}a_{q, 3}(t)\|Q\|_{L^{4}}^{2}\|\na Q\|_{H^{s}}2^{-qs}\bar{a}^{(1)}_{q}(t)\|\D Q\|_{H^{s}}\\
&\leq C2^{-2qs}b^{(3)}_{q}(t)\|Q\|_{L^{4}}^{2}\|\na Q\|_{H^{s}}\|\D Q\|_{H^{s}},\\
|\j_{10}|&=|-a\la \big(\D_q (|Q|Q), \na \D_q u\big)|\\
&\leq C\|\D_q (|Q|Q)\|_{L^2}\|\na \D_q u\|_{L^2}\\
&\leq C2^{-qs}a_{q, 2}(t)\|Q\|_{L^{2}}\|\na Q\|_{H^{s}}2^{-qs}\bar{a}^{(1)}_{q}(t)\|\na u\|_{H^s}\\
&\leq C(s)2^{-2qs}b^{(4)}_{q}(t)\|Q\|_{L^2}\|\na Q\|_{H^{s}}\|\na u\|_{H^s},\\
|\j_{11}|&=|-c\la (\D_q (|Q|Q\tr(Q)^2), \na \D_q u)|\\
&\leq C\|\D_q (|Q|Q\tr(Q^2))\|_{L^2}\|\na \D_q u\|_{L^2}\\
&\leq C 2^{-qs}a_{q, 4}(t)\|Q\|_{L^{6}}^{3}\|\na Q\|_{H^{s}}2^{-qs}\bar{a}_{q}^{(1)}(t)\|\na u\|_{H^{s}}\\
&\leq C2^{-2qs}b^{(5)}_{q}(t)\|Q\|_{L^{6}}^{3}\|\na Q\|_{H^{s}}\|\na u\|_{H^{s}},
\end{align*}
for some $\{b^{(i)}_{q}(t)\}_{q\in \mathbb{N}}\in l^{1}$ with $i=3, 4, 5$.

Finally, for the remaining  two terms $\i_{8}$ and $\j_9$ it is not easy to obtain the bounds directly.
The key idea is to combine them together and identify the cancellation factors between them
for the estimates. Applying the Bony's decomposition, we have
\begin{align*}
\i_8&=-\la (\D_q (|Q|D), \D \D_q Q)\\
&=-\la (S_{q-1}(|Q|)\D_q D, \D \D_q Q)-\la \sum_{|q^{'}-q|\leq 5}([\D_q ; S_{q^{'}}(|Q|)]\D_{q^{'}}D, \D \D_{q}Q)\\
&\quad -\la \sum_{|q^{'}-q|\leq 5}((S_{q^{'}-1}|Q|-S_{q-1}|Q|)\D_q \D_{q^{'}}D, \D \D_{q}Q)\\
&\quad -\la \sum_{q^{'}>q-5}(\D_{q}(S_{q^{'}+2}D\D_{q^{'}}|Q|), \D \D_{q}Q)\\
&=\la \sum_{i=1}^{4}\i_{8, i},\\
\j_9&=\la (\D_q (|Q|\D Q), \na \D_{q}u)=\la (S_{q-1}|Q|\D_q \D Q, \na \D_{q}u)+\la ([\D_q ; S_{q^{'}}|Q|]\D_{q^{'}}\D Q, \na \D_{q}u)\\
&\quad +\la \sum_{|q^{'}-q|\leq 5}((S_{q^{'}-1}|Q|-S_{q-1}|Q|)\D_q \D_{q^{'}}\D Q, \na \D_{q}u)\\
&\quad +\la \sum_{q^{'}>q-5}(\D_{q}(S_{q^{'}+2}\D Q\D_{q^{'}}|Q|), \na \D_{q}u)\\
&=\la \sum_{i=1}^{4}\i_{9, i}.
\end{align*}
Since $Q$ is symmetric and $\Omega=\na u -D$ is skew-symmetric, we find that, for $1\leq i \leq 3$,
\begin{equation*}
\i_{8, i}+\j_{9, i}=0.
\end{equation*}
Then
\begin{equation*}
\i_{8}+\j_{9}=\i_{8, 4}+\j_{9, 4}.
\end{equation*}
In addition, we have
\begin{align*}
|\i_{8, 4}|&=\big|\sum_{q^{'}>q-5}(\D_{q}(S_{q^{'}+2}D\D_{q^{'}}|Q|), \D \D_{q}Q)\big|\\
&\leq C\sum_{q^{'}>q-5}\|S_{q^{'}+2}\na u\|_{L^4}\|\D_{q^{'}}|Q|\|_{L^4}\|\D \D_{q}Q\|_{L^2}\\
&\leq C\sum_{q^{'}>q-5}2^{q^{'}+2}\| u\|_{L^4}2^{-q^{'}}\|\D_{q^{'}}\na Q\|_{L^4}\|\D \D_{q}Q\|_{L^2}\\
&\leq C\sum_{q^{'}>q-5}\|u\|_{L^2}^{\frac{1}{2}}\|\na u\|_{L^{2}}^{\frac{1}{2}}
  \|\D_{q^{'}}\na Q\|_{L^2}^{\frac{1}{2}}\|\D_{q^{'}}\D Q\|_{L^2}^{\frac{1}{2}}\|\D_{q}\D Q\|_{H^s}\\
&\leq C2^{-2qs}\sum_{q^{'}>q-5}2^{(q-q^{'})s}\bar{a}^{(1)}_{q^{'}}(t)\bar{a}^{(1)}_{q}(t)
  \|u\|_{L^2}^{\frac{1}{2}}\|\na u\|_{L^{2}}^{\frac{1}{2}}\|\na Q\|_{H^s}^{\frac{1}{2}}\|\D Q\|_{H^s}^{\frac{3}{2}}\\
&\leq C2^{-2qs}b^{(6)}_{q}(t)\|u\|_{L^2}^{\frac{1}{2}}\|\na u\|_{L^{2}}^{\frac{1}{2}}
  \|\na Q\|_{H^s}^{\frac{1}{2}}\|\D Q\|_{H^s}^{\frac{3}{2}},
\end{align*}
and
\begin{align*}
|\i_{9, 4}|&=\big|\sum_{q^{'}>q-5}(\D_{q}(S_{q^{'}+2}\D Q\D_{q^{'}}|Q|), \na \D_{q}u)\big|\\
&\leq C\sum_{q^{'} >q-5}\|S_{q^{'} +2}\D Q\|_{L^4}\|\D_{q^{'}} Q\|_{L^4}\|\D_q \na u\|_{L^2}\\
&\leq C\sum_{q^{'} >q-5}2^{q^{'}}\|\na Q\|_{L^4}2^{-q^{'}}\|\D_{q^{'}} \na Q\|_{L^4}\|\D_q \na u\|_{L^2}\\
&\leq C\sum_{q^{'} >q-5}\|\na Q\|_{L^2}^{\frac{1}{2}}\|\D Q\|_{L^{2}}^{\frac{1}{2}}
  \|\D_{q^{'}} \na Q\|_{L^2}^{\frac{1}{2}}\|\D_{q^{'}}\D Q\|_{L^{2}}^{\frac{1}{2}}\|\D_q \na u\|_{L^2}\\
&\leq C\sum_{q^{'} >q-5}\|\na Q\|_{L^2}^{\frac{1}{2}}\|\D Q\|_{L^{2}}^{\frac{1}{2}}2^{-q^{'}s}\bar{a}^{(1)}_{q^{'}}(t)
  \|\na Q\|_{H^s}^{\frac{1}{2}}\|\D_{q^{'}}\D Q\|_{H^{s}}^{\frac{1}{2}}2^{-qs}\bar{a}^{(1)}_{q}(t)\|\na u\|_{H^s}\\
&\leq C2^{-2qs}b^{(6)}_{q}(t)\|\na Q\|_{L^2}^{\frac{1}{2}}\|\D Q\|_{L^{2}}^{\frac{1}{2}}
  \|\na Q\|_{H^s}^{\frac{1}{2}}\|\D Q\|_{H^{s}}^{\frac{1}{2}}\|\na u\|_{H^s},
\end{align*}
with $\{b^{(6)}_{q}(t)\}_{q\in \mathbb{N}}=\{\sum_{q^{'}>q-5}2^{(q-q^{'})s}\bar{a}^{(1)}_{q^{'}}(t)\bar{a}^{(1)}_{q}(t)\}_{q\in \mathbb{N}}\in l^{1}$.

Multiplying all the above estimates by $2^{2qs}$, adding them together,  taking the sum in $q$ for $q\in \mathbb{N}$,
noticing that $\{b_{q}(t)\}_{q\in \mathbb{N}}, \{b^{(i)}_{q}(t)\}_{q \in\mathbb{N}}\in l^{1}$ with $1\leq i\leq 6$, and using the Cauchy inequality with suitable $\ve$, {\it i.e.}, $ab\leq \ve a^2 +\frac{C}{\ve} b^2$,
we obtain the desired estimate (\ref{high-freq-Q-u}).

\bigskip

\section*{Acknowledgments}
G.-Q. Chen's research was supported in part by
the UK
Engineering and Physical Sciences Research Council Award
EP/L015811/1
and the Royal Society--Wolfson Research Merit Award (UK).
A.  Majumdar's research is supported by an EPSRC Career Acceleration Fellowship EP/J001686/1 and EP/J001686/2,
an OCIAM Visiting Fellowship and the Advanced Studies Centre at Keble College.
D. Wang's research was supported in part by the National Science Foundation under grants DMS-1312800 and DMS-1613213.
R. Zhang's research was supported in part by the National Science Foundation under grant DMS-1312800.

\bigskip

\end{document}